\documentclass[11pt]{article}

\usepackage[margin=1in]{geometry}
\usepackage{amsmath,amssymb,amsthm,mathtools}
\usepackage{enumitem}
\usepackage{hyperref}

\hypersetup{
  colorlinks=true,
  linkcolor=blue,
  citecolor=blue,
  urlcolor=blue
}

\newtheorem{theorem}{Theorem}[section]
\newtheorem{proposition}[theorem]{Proposition}
\newtheorem{lemma}[theorem]{Lemma}
\newtheorem{corollary}[theorem]{Corollary}
\newtheorem{remark}[theorem]{Remark}

\newcommand{\Fp}{\mathbb F_p}
\newcommand{\Fps}{\mathbb F_p^\times}
\newcommand{\C}{\mathbb C}

\newcommand{\ip}[2]{\langle #1,#2\rangle}
\newcommand{\norm}[1]{\left\|#1\right\|}

\title{Sharp Conditioning for Matrix Recovery by Finite Affine Orbits}
\author{Dongwei Li\\
School of Mathematics, Hefei University of Technology\\
Hefei 230601, China\\
\texttt{dongweili@hfut.edu.cn}}
\date{}

\begin{document}

\maketitle

\begin{abstract}
Let \(q=p^h\) be an odd prime power, and let the affine group
\(\mathbb F_q\rtimes\mathbb F_q^\times\) act through its canonical
\((q-1)\)-dimensional irreducible representation.  Qualitative matrix
recovery for these rank-one orbits is known.  We determine sharp lower
singular-value bounds for explicit real generating windows.  First, we
compute the exact least singular value for every nonnegative two-level
window over \(\mathbb F_{p^h}\) with \(h\geq2\), and identify the unique
optimizer when \(q-1\geq10\).  The resulting
conditioning stays bounded away from zero on every fixed
odd-characteristic tower and is within an explicit characteristic-dependent
factor of the best possible value over all real windows.

For every \(q=3^h\), \(h\geq2\), we construct a real three-level
absolute-trace window (constant on the fibers of
\(\operatorname{Tr}_{\mathbb F_q/\mathbb F_3}\)) whose least singular value is
\[
 \frac{q(\sqrt2-1)}{q(2-\sqrt2)-1}>\frac1{\sqrt2}.
\]
For the full class of real windows constant on the three trace classes, we
reduce the least singular value to four scalar expressions and a symmetric
\(2\times2\) matrix.  This yields the exact global optimum and all equality
cases: the displayed trace window is uniquely optimal up to global sign and
interchange of the two nonzero trace classes.  Thus zero trace mean follows
from optimality.  The proof combines an explicit sum-of-squares identity,
uniform quadratic-form certificates, and a finite-geometric block
decomposition of the orbit measurement operator.
\end{abstract}

\medskip
\noindent\textbf{Keywords:} finite group frames; affine group; phase retrieval;
matrix recovery; maximal spanning; informational completeness;
finite-field Fourier analysis; conditioning.

\noindent\textbf{Mathematics Subject Classification:} 42C15, 43A65, 15A83,
94A12.

\section{Introduction}

Phase retrieval asks for recovery of a vector from magnitudes of frame
coefficients; its finite-dimensional injectivity and stability theories have
developed from the foundational frame formulation in
\cite{BalanCasazzaEdidin2006} through algebraic and quantitative criteria in
\cite{BandeiraCahillMixonNelson2014,ConcaEdidinHeringVinzant2015}.
A stronger linear problem is matrix recovery, also called the
maximal span property: a family \((\psi_i)_{i\in I}\subset \C^d\) does matrix
recovery if the rank-one projectors \(\psi_i\psi_i^*\) span the real or
complex matrix space under consideration.  Matrix recovery implies phase
retrieval by applying the linear recovery map to \(xx^*\), and is the
linear measurement model underlying informationally complete rank-one
tomography \cite{Scott2006}.

Highly structured examples are important because generic measurements are
abundant but rarely explain how stability depends on the dimension.  Group
orbits are especially natural: one chooses a finite group \(G\), a unitary
representation \(\pi:G\to U(\mathcal H)\), and a generating vector
\(\phi\in\mathcal H\), and studies the orbit
\((\pi(g)\phi)_{g\in G}\); geometrically uniform frames provide an early
systematic instance of this principle \cite{EldarBoelcskei2003}.

This paper concerns the affine group
\[
        G=\Fp\rtimes \Fps,\qquad
        (k,l)(k',l')=(k+lk',ll'),
\]
where \(p\) is an odd prime.  Its canonical irreducible representation on
\(\mathcal H=\C^{\Fps}\) is
\[
        (\pi(k,l)f)(m)=e_p(km)f(lm),
        \qquad
        e_p(x)=\exp(2\pi i x/p).
\]
The dimension is \(d=p-1\), while \(|G|=p(p-1)=d(d+1)\), exceeding the dimension \(d^2\) of the matrix space by \(d\).

Bartusel, F\"uhr and Oussa proved that this representation admits generating
vectors whose affine orbit does matrix recovery.  More precisely, they gave
a complete criterion in terms of two finite matrices: a multiplicative
character vector \(c_\phi\) and a rectangular matrix \(B_\phi\).  Their result
is qualitative and algebraic: \(c_\phi\) must have no zero character
coefficient, and \(B_\phi\) must have full column rank.

The purpose of the present paper is to solve the following quantitative design
problem in a natural explicit family: among normalized vectors taking one value
at a distinguished coordinate and a second value elsewhere, which affine orbit
has the largest least singular value?  This question is not answered by a
qualitative maximal-spanning criterion.

Theorem \ref{thm:ppoptimal} gives the exact least singular value of every
nonnegative two-level generator on every odd-characteristic prime-power tower
of extension degree at least two and
identifies the unique optimizer once \(d\geq10\).  Theorem~\ref{thm:traceflat}
then exposes a different phenomenon: on every \(3\)-power tower, an explicit
three-level absolute-trace window, defined using
\(\operatorname{Tr}_{\mathbb F_q/\mathbb F_3}\), has a least singular value uniformly larger
than \(1/\sqrt2\).  Theorem~\ref{thm:tracespectrum} gives the full spectral
reduction for the zero-sum trace class.  Theorem~\ref{thm:generaltrace}
extends the least-singular-value reduction to arbitrary real three-value
trace windows, without a zero-sum assumption.
Theorem~\ref{thm:traceglobal} proves the exact global optimum over this
entire class and classifies all equality cases.  Its proof uses a
sum-of-squares identity on the principal sign region and six explicit
matrix certificates on its complement.  A convex interpolation between
the \(q=9\) and limiting certificates makes the proof uniform in the
field size.  In particular, zero trace mean is forced by optimality.
Corollary~\ref{cor:traceisolation} supplies a quantitative isolation
estimate within the zero-sum class for an explicit range of errors.
Theorem~\ref{thm:universalupper} also gives a comparison
with every real unit generator: the optimized two-level orbit is within a
\(p\)-dependent constant factor of the globally best real conditioning, and
Proposition~\ref{prop:ceilingrigidity} identifies the simultaneous spectral
equalities that any global extremizer would have to satisfy.

\begin{theorem}[Baseline affine orbit]
\label{thm:zero}
Let \(p\geq 5\) be prime and put \(n=p-1\).  Define
\[
        \phi(1)=0,
        \qquad
        \phi(m)=\frac{1}{\sqrt{n-1}}\quad (m\neq 1).
\]
Then the affine orbit measurement operator
\[
        \mathcal M_\phi:\C^{\Fps\times \Fps}\to \C^G,\qquad
        \mathcal M_\phi(A)(g)
        =
        \ip{A\pi(g)\phi}{\pi(g)\phi},
\]
satisfies
\[
        \norm{\mathcal M_\phi(A)}_{\ell^2(G)}
        \geq
        \frac{2\sqrt p}{n-1}\,
        \sin\left(\frac{\pi}{2n}\right)\norm{A}_F
        \qquad
        \text{for every }A\in \C^{\Fps\times \Fps}.
\]
Consequently \(\mathcal M_\phi\) is injective and admits a linear inverse on
its range with operator norm \(O(d^{3/2})\).
Moreover this order is sharp for this generator: there is a nonzero matrix
\(A\) such that
\[
        \norm{\mathcal M_\phi(A)}_{\ell^2(G)}
        \leq
        \frac{\sqrt{3}\pi\sqrt p}{(n-1)n}\,\norm{A}_F.
\]
\end{theorem}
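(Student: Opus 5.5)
The plan is to block-diagonalize \(\mathcal M_\phi\) with additive and multiplicative Fourier analysis, and then to read off both the lower bound and the sharpness example from explicit finite matrices. Since \(\phi\) is real,
\[
\mathcal M_\phi(A)(k,l)=\sum_{m,m'\in\Fps}A(m,m')\,e_p\bigl(k(m'-m)\bigr)\,\phi(lm)\phi(lm'),
\]
so Plancherel in \(k\in\Fp\) gives
\[
\norm{\mathcal M_\phi(A)}^2=p\sum_{l\in\Fps}\sum_{r\in\Fp}\Bigl|\sum_{m'-m=r}A(m,m')\phi(lm)\phi(lm')\Bigr|^2.
\]
The \(r=0\) term involves only the diagonal of \(A\) and is a multiplicative convolution in \(l\) with \(\phi^2\). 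Its multiplicative Fourier transform is multiplication by the character coefficients \(c_\phi(\chi)=\sum_m\chi(m)\phi(m)^2\). For \(\phi^2=(1-\delta_1)/(n-1)\) these equal \(1\) for the trivial character and \(-1/(n-1)\) otherwise, so the diagonal block contributes singular values at least \(\sqrt p/(n-1)\). This already dominates the claimed bound, since \(2\sin(\pi/2n)\le1\).

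For \(r\neq0\) I reparametrize the off-diagonal entries by the ratio \(t=m'/m\in\Fps\setminus\{1\}\), writing \(B_t(m)=A(m,tm)\). Then \(m=r/(t-1)\), and the product \(\phi(lm)\phi(lm')=w_t(lr/(t-1))\) depends on \((l,r)\) only through \(u=lr\), where \(w_t(x)=\phi(x)\phi(tx)\). Taking the multiplicative Fourier transform in \(r\) (equivalently in \(m\)) for each fixed \(u\) splits the off-diagonal part into \(n\) blocks indexed by characters \(\chi\). The \(\chi\)-block is the \(n\times(n-1)\) matrix
\[
(B_\chi)_{u,t}=\chi(t-1)\,w_t\bigl(u/(t-1)\bigr),
\]
which is the Bartusel--F\"uhr--Oussa matrix \(B_\phi\) twisted by \(\chi\), and the map is unitary up to the factor \(\sqrt p\). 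Hence the least singular value of \(\mathcal M_\phi\) is \(\sqrt p\) times the minimum of \(\min_\chi|c_\phi(\chi)|\) and \(\min_\chi\sigma_{\min}(B_\chi)\).

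For the baseline window, \(w_t(x)=(1-\delta_{x=1}-\delta_{x=t^{-1}})/(n-1)\). Thus \((n-1)B_\chi=J_\chi-P_\chi-Q_\chi\), where \(J_\chi\) has constant columns \(\chi(t-1)\). The matrices \(P_\chi\) and \(Q_\chi\) each have exactly one unimodular entry per column: the rows are \(u=t-1\) and \(u=(t-1)/t=1-t^{-1}\) respectively. The key combinatorial step is to check that the bipartite incidence graph with rows \(u\in\Fps\) and columns \(t\neq1\), with edges \(t\sim t-1\) and \(t\sim1-t^{-1}\), is a single path through all \(n\) rows and \(n-1\) columns. After conjugating by diagonal unitaries, which absorb the character phases, \(P_\chi+Q_\chi\) becomes the bidiagonal path-difference matrix \(D\), and its least singular value is exactly \(2\sin(\pi/2n)\).

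It remains to handle \(J_\chi\). Its columns all lie in the span of the all-ones row vector \(\mathbf 1\). I would show either that \(\mathbf 1\) is orthogonal to the range of \(D\) in the twisted coordinates, or that for \(x\) in the relevant subspace \(\norm{(J_\chi-D)x}^2=\norm{Jx}^2+\norm{Dx}^2\) by a Pythagorean splitting. In either case \(\sigma_{\min}((n-1)B_\chi)\ge\sigma_{\min}(D)=2\sin(\pi/2n)\), which yields the stated constant \(\frac{2\sqrt p}{n-1}\sin\frac{\pi}{2n}\). Injectivity is then immediate, and the inverse norm is \(\frac{n-1}{2\sqrt p\sin(\pi/2n)}\sim\frac{n^2}{\pi\sqrt p}=O(d^{3/2})\). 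Verifying the single-path structure, and controlling the interaction of the rank-one part \(J_\chi\) for the trivial versus nontrivial characters, is where I expect the main obstacle. If the orthogonality fails, I would fall back to interlacing: the rank-one perturbation \(J_\chi\) can only move the bottom singular value past the second-smallest singular value of \(D\), which is still of order \(1/n\).

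For sharpness I take \(A\) supported off the diagonal in a single character block. I choose its coefficient vector \(x\) orthogonal to the columns of \(J_\chi\), that is, \(\sum_t\chi(t-1)x_t=0\), and make \(Dx\) small. A smooth trial vector along the path, for example a sine profile of period about the path length and adjusted to satisfy the one linear constraint, gives \(\norm{Dx}\le\sqrt3\,\pi\norm x/n\). Dividing by \(n-1\) and multiplying by \(\sqrt p\) gives the displayed upper bound.
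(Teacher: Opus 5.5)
Your Fourier block decomposition is correct; it re-derives Proposition~\ref{prop:block} directly. The character twist is also harmless. Up to a row permutation, $(n-1)B_\chi=(\mathbf 1\mathbf 1^{\mathsf T}-E)\Delta_\chi$, where $E$ is the unoriented path incidence matrix and $\Delta_\chi=\operatorname{diag}(\chi(t-1))$ is unitary. So every $B_\chi$ has the singular values of $B_\phi$, and there is no trivial/nontrivial case to control.

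\textbf{The gap.} It is the step you flag as the obstacle, and none of the three mechanisms you propose works.
\begin{itemize}
\item The all-ones vector is not orthogonal to the range of $E$. Each edge has two endpoints, so $\mathbf 1^{\mathsf T}E=2\mathbf 1^{\mathsf T}$. In fact, since $n=p-1$ is even, $\mathbf 1$ lies in $\operatorname{range}(E)$: it is $E$ applied to the indicator of a perfect matching of the path.
\item It is true that $\mathbf 1\perp\operatorname{range}(D)$ for the difference matrix $D$. But after the alternating sign change that turns $E$ into $D$, the rank-one part has columns along the alternating vector. For even $n$ that vector lies in $\operatorname{range}(D)=\mathbf 1^\perp$. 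Hence there is no Pythagorean splitting: with $s=\mathbf 1^{\mathsf T}y$, the cross term is $-2\operatorname{Re}\,(\mathbf 1 s)^*Ey=-4|s|^2\neq0$ in general.
\item The interlacing fallback runs the wrong way. For an $n\times(n-1)$ matrix $X$ and rank-one $R$, interlacing only gives $\sigma_{n-1}(X+R)\geq\sigma_n(X)=0$. The second-smallest singular value $\sigma_{n-2}(X)$ is an \emph{upper} bound for $\sigma_{n-1}(X+R)$, not a lower bound. Even a correct order-$1/n$ estimate would not give the exact constant $2\sin(\pi/(2n))$.
\end{itemize}

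\textbf{The missing idea.} It is the short Gram computation in Lemma~\ref{lem:path}:
\[
\|(\mathbf 1\mathbf 1^{\mathsf T}-E)y\|^2=(n-4)|s|^2+\|Ey\|^2,
\qquad\text{i.e.}\qquad C^*C=(n-4)J+E^*E\succeq E^*E .
\]
The negative cross term is not eliminated; it is absorbed by $\|\mathbf 1\|^2=n$. This is exactly where the hypothesis $p\geq5$ (so $n\geq4$) enters.

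\textbf{Sharpness.} Your plan is workable but unverified. In the coordinates where the rank-one part is $\mathbf 1\mathbf 1^{\mathsf T}$, the near-extremal vector is the alternating $z_j=(-1)^j\sin(j\pi/n)$, not a smooth profile. Because $n$ is even, $|\sum_jz_j|=\tan(\pi/(2n))\neq0$, so your linear constraint is not automatic. Rather than adjusting $z$, the paper keeps the $J$-term through the same identity and bounds its contribution by $(n-4)\tan^2(\pi/(2n))/(n/2)\leq2\pi^2/n^2$. Together with $4\sin^2(\pi/(2n))\leq\pi^2/n^2$ this gives $3\pi^2/n^2$, which is where the factor $\sqrt3$ comes from.
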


The constant is explicit:
\[
        \frac{2\sqrt p}{n-1}\,
        \sin\left(\frac{\pi}{2n}\right)
        \geq
        \frac{2\sqrt p}{(n-1)n}
\]
for \(n\geq2\).  The second theorem below improves this baseline by a factor of
order \(d^{1/2}\), while retaining a completely explicit orbit of size
\(d(d+1)\).

\section{Related work and scope}

The exact recovery side is due to Bartusel, F\"uhr and Oussa
\cite{BartuselFuhrOussa2023}.  Their Theorem 6.5 characterizes matrix
recovery for the canonical affine representation over prime fields, their
Corollary 6.6 gives an explicit recovery algorithm, and their Theorem 6.7
gives a concrete qualitative example.

Cheng's Lemma~2.4 already expresses the frame operator on an isotypic
component as a scaled Gram matrix tensored with the identity.  We use this
standard representation-theoretic mechanism; the block reduction itself is
not claimed as a new general principle.  Our contribution is the evaluation
and optimization of the resulting blocks for explicit affine windows.
In particular, the present work gives an exact
singular-value formula, not merely an injectivity test; it identifies the
unique optimum in the nonnegative two-level family; and it compares the
resulting constant with a universal upper benchmark for all real generators.
Necessary algebraic conditions for equality at that benchmark are also given.  In a different direction, a three-level absolute-trace
construction on every \(3\)-power tower has an exact uniform stability
constant and strictly outperforms the nonnegative two-level family on every such tower.
Theorem~\ref{thm:traceglobal} further proves its exact global optimality
among all real three-value trace windows, without imposing zero trace mean.
These statements
are all absent from a qualitative maximal-spanning assertion.

After the appearance of \cite{BartuselFuhrOussa2023}, Cheng proved that the
canonical \((q-1)\)-dimensional representation of \(\mathrm{GA}(1,q)\) admits
maximal spanning vectors, and gave a characterization of such vectors
\cite[Theorem~1.2 and Remark~4.7]{Cheng2025}.  Thus qualitative matrix
recovery over prime-power fields is not claimed here as new.  The present
contribution is the exact two-level optimization, together with the
closed-form \(3\)-power trace family and its global optimization over
all real three-value trace windows.
Cheng's existence and characterization result does not state either of these
quantitative conclusions.

General stability theory for phase retrieval and generalized phase retrieval
is extensive; see, for example, \cite{GrohsKoppensteinerRathmair2020} for a
survey and \cite{Zhuang2019real,Zhuang2019complex} for Lipschitz statements
for generalized phase retrieval.  Those results are not a substitute for an
explicit stability calculation for a specific finite group orbit, since the
constants in the general theory are typically encoded as compactness minima
or frame bounds rather than as closed-form estimates for a deterministic
family.

For broader context, finite tight frames and their erasure/stability questions
are developed in \cite{DuffinSchaeffer1952,BenedettoFickus2003,
CasazzaKovacevic2003,StrohmerHeath2003}; the Welch bound
\cite{Welch1974} remains a basic benchmark for structured frame design.
Standard references for finite-frame constructions are
\cite{CasazzaKutyniok2013,Waldron2018}.  Deterministic full-spark and
erasure-robust frames obtained from totally nonsingular matrices are studied
in \cite{Li2026TNS}.  That linear erasure problem is distinct from the
rank-one orbit measurement problem here, but shares the guiding aim of
obtaining verifiable robustness from explicit algebraic structure.
The tight informationally complete
measurements appearing in Lemma~\ref{lem:ceiling2design} belong to the
projective-design framework of \cite{DelsarteGoethalsSeidel1977,
EldarForney2002,RenesEtAl2004,RoyScott2007}.  We use only elementary
finite-field Fourier and finite-group representation tools; convenient
background sources are \cite{LidlNiederreiter1997,Terras1999,Serre1977}.
Finally, convex lifting and nonconvex algorithmic approaches to generic phase
retrieval \cite{CandesStrohmerVoroninski2013,CandesLiSoltanolkotabi2015}
provide useful complementary motivation, but do not give the deterministic
orbit-specific least-singular-value formula proved here.
Recent work on nilpotent-group orbits likewise establishes broad qualitative
phase-retrieval classes \cite{FuhrOussa2023}; it does not address sharp
conditioning for the affine orbit considered here.

\section{The Affine Block Criterion}

We recall the two objects in the Bartusel--F\"uhr--Oussa criterion.  Throughout
this section \(p\) is an odd prime and \(n=p-1\).  For a real-valued vector
\(\phi\in\mathbb R^{\Fps}\), define
\[
        c_\phi(\chi)
        =
        \sum_{m\in\Fps} |\phi(-m)|^2\chi(m),
        \qquad
        \chi\in\widehat{\Fps},
\]
and
\[
        B_\phi(m,t)=\phi(mt)\phi(m(t+1)),
        \qquad
        m\in\Fps,\quad t\in\Fp\setminus\{0,-1\}.
\]
Thus \(B_\phi\) is an \(n\times(n-1)\) matrix.

\begin{proposition}[Quantitative affine block criterion]
\label{prop:block}
Let \(\phi\in\mathbb R^{\Fps}\) be a unit vector and put
\[
        \alpha_\phi=\min_{\chi\in\widehat{\Fps}} |c_\phi(\chi)|,
        \qquad
        \beta_\phi=\sigma_{\min}(B_\phi).
\]
Then, for every matrix \(A\in \C^{\Fps\times \Fps}\),
\[
        \norm{\mathcal M_\phi(A)}_{\ell^2(G)}
        \geq
        \sqrt p\,\min\{\alpha_\phi,\beta_\phi\}\,\norm{A}_F.
\]
In particular, if \(\alpha_\phi>0\) and \(\beta_\phi>0\), then
\(\pi(G)\phi\) does matrix recovery.
\end{proposition}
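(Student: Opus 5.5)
The plan is to compute \(\norm{\mathcal M_\phi(A)}_{\ell^2(G)}^2\) exactly by Parseval in the translation variable \(k\). This splits the measurement operator into a diagonal block, governed by \(c_\phi\), and off-diagonal blocks, each governed by \(B_\phi\). Write \(\psi_{k,l}=\pi(k,l)\phi\), so that \(\psi_{k,l}(m)=e_p(km)\phi(lm)\) with \(\phi\) real. Then
\[
 \mathcal M_\phi(A)(k,l)=\sum_{m,m'\in\Fps}A(m,m')\,e_p\bigl(k(m'-m)\bigr)\,\phi(lm)\phi(lm').
\]
Grouping terms by the difference \(r=m'-m\in\Fp\) exhibits, for fixed \(l\), a function of \(k\) whose Fourier coefficients are
\[
 F_l(r)=\sum_{m:\ m,\,m+r\in\Fps}A(m,m+r)\,\phi(lm)\phi(l(m+r)).
\]
Summing over \(k\in\Fp\) via Parseval gives
\[
 \norm{\mathcal M_\phi(A)}^2=p\sum_{l\in\Fps}\sum_{r\in\Fp}|F_l(r)|^2 .
\]
The proof then bounds the \(r=0\) part and each \(r\neq0\) part separately.

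\emph{Diagonal part (\(r=0\)).} Here \(F_l(0)=\sum_m a(m)\phi(lm)^2\) with \(a(m)=A(m,m)\). This is a correlation on the cyclic group \(\Fps\), so Fourier analysis on \(\widehat{\Fps}\) (Plancherel with normalization \(1/n\)) turns it into a multiplication operator. Its multiplier at \(\chi\) is a character sum \(\sum_u\phi(u)^2\overline{\chi}(u)\) or a sign-twisted variant. Since \(\chi(-1)=\pm1\) and the modulus is unchanged by conjugation, its absolute value equals \(|c_\phi(\chi')|\) for some character \(\chi'\). Hence
\[
 \sum_l|F_l(0)|^2\ge \alpha_\phi^2\sum_m|A(m,m)|^2 .
\]

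\emph{Off-diagonal part (\(r\neq0\)).} For fixed \(r\in\Fps\), substitute \(m=rt\); the admissible \(t\) are exactly \(t\in\Fp\setminus\{0,-1\}\). Put \(x_r(t)=A(rt,r(t+1))\). Then
\[
 F_l(r)=\sum_t x_r(t)\,\phi(lrt)\phi(lr(t+1))=(B_\phi x_r)(lr).
\]
As \(l\) runs over \(\Fps\), so does \(lr\). Therefore \(\sum_l|F_l(r)|^2=\norm{B_\phi x_r}^2\ge\beta_\phi^2\norm{x_r}^2\), which holds for complex \(x_r\) because \(B_\phi\) is real. The vectors \(x_r\), \(r\in\Fps\), partition the off-diagonal entries of \(A\) bijectively, since each pair \(m\neq m'\) determines \(r=m'-m\) and \(t=m/r\).

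Adding the two parts gives \(\norm{\mathcal M_\phi(A)}^2\ge p\min\{\alpha_\phi,\beta_\phi\}^2\norm{A}_F^2\). Injectivity of \(\mathcal M_\phi\) then follows when \(\alpha_\phi,\beta_\phi>0\). Since \(|G|=d(d+1)>d^2\), this injectivity is equivalent to the projectors \(\psi\psi^*\) spanning the matrix space, i.e.\ to matrix recovery.

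The main obstacle I expect is index bookkeeping in the diagonal block. The multiplier must come out with exactly the modulus \(|c_\phi(\chi)|\) for some \(\chi\), including the reflection \(\phi(-m)\) in the definition of \(c_\phi\). I would settle this by computing the \(\chi\)-transform of \(l\mapsto F_l(0)\) directly: substitute \(u=lm\), then use \(|\chi(-1)|=1\) together with the fact that \(\chi\mapsto\overline\chi\) permutes \(\widehat{\Fps}\). Only the minimum over all characters enters, so any such relabeling of \(\chi\) is harmless.
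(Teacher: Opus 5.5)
Your argument is correct. The blocks you obtain are the paper's blocks:
\begin{itemize}
\item the \(r=0\) frequency is the \(K_1\) block, namely the diagonal of \(A\) under multiplicative convolution with a multiplier of modulus \(|c_\phi(\chi)|\);
\item your vectors \(x_r\), \(r\in\Fps\), are, after relabeling indices, the rows of \(A_2\) in the Bartusel--F\"uhr--Oussa coordinates, so \(\sum_{r\neq0}\norm{B_\phi x_r}^2\) is the paper's \(\norm{A_2(C'_\phi)^*}_F^2\).
\end{itemize}

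The mechanism differs. The paper imports the unitary coordinate change \(S\) from Bartusel--F\"uhr--Oussa. It then gets \(F_1\perp F_2\) from additive orthogonality, and evaluates \(\norm{F_2}^2\) by Schur orthogonality for the irreducible representation \(\pi\), using \(|G|/n=p\). You instead apply Parseval once in the translation variable \(k\). Grouping by the difference \(r=m'-m\) gives the orthogonal splitting and the factor \(p\) in one elementary step. The substitution \(m=rt\) then identifies each off-diagonal piece with \(B_\phi\) itself rather than with a permuted copy.

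What each route buys:
\begin{itemize}
\item Yours is self-contained and uses neither irreducibility nor the cited intertwiner. Because it is an exact identity, it immediately gives the sharp statement \(\sigma_{\min}(\mathcal M_\phi)=\sqrt p\min\{\alpha_\phi,\beta_\phi\}\). To see attainment, take \(A\) diagonal with diagonal a single multiplicative character, or supported on one \(x_r\) equal to a least right singular vector of \(B_\phi\). This is the exact block identity the paper relies on later for its sharpness claims.
\item The paper's route places the two pieces as the isotypic components of the conjugation representation. That is the form that transfers to other group orbits.
\end{itemize}

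One small correction: the count \(|G|>d^2\) plays no role in your last step. Injectivity of \(\mathcal M_\phi\) is equivalent to the projectors \(\pi(g)\phi\phi^*\pi(g)^*\) spanning the matrix space simply because the trace pairing is nondegenerate.
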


\begin{proof}
We record the quantitative content of the decomposition in
\cite[Proposition~6.4 and the proof of Theorem~6.5]{BartuselFuhrOussa2023}.
The unitary change of matrix coordinates \(S\) from that result splits the
conjugation representation into orthogonal invariant spaces
\[
 K_1\cong\C^{\Fps},
 \qquad
 K_2\cong\C^{\Fps\times(\Fp\setminus\{0,-1\})}.
\]
Write \(SA=A_1\oplus A_2\).  Applied to the rank-one seed, the same
coordinate change gives
\[
 S(\phi\phi^*)=(w_\phi\mid C'_\phi),
 \qquad
 w_\phi(m)=|\phi(-m)|^2,
\]
where \(C'_\phi\) is obtained from \(B_\phi\) by unitary row and column
permutations.  Hence
\[
        \sigma_{\min}(C'_\phi)=\sigma_{\min}(B_\phi)=\beta_\phi.
        \tag{1}
\]

Let \(F=\mathcal M_\phi(A)\), and denote by \(F_1,F_2\) the coefficient
functions arising from \(A_1,A_2\), respectively.  The explicit coordinate
action is
\[
 F_1(k,l)=\sum_{m\in\Fps} A_1(m)w_\phi(lm),
 \qquad
 F_2\perp\operatorname{span}\{\widetilde\chi:\chi\in\widehat{\Fps}\}.
        \tag{2}
\]
The first function is independent of \(k\), and therefore belongs to the
character span in (2).  Thus \(F_1\perp F_2\), and
\[
        \norm{F}_2^2=\norm{F_1}_2^2+\norm{F_2}_2^2.
        \tag{3}
\]

Use the unitary Fourier transform on the multiplicative group \(\Fps\).
The first formula in (2) is multiplicative convolution, with multiplier
\(c_\phi(\chi)\) at the character \(\chi\).  Since the additive parameter has
\(p\) values, Plancherel gives
\[
 \norm{F_1}_2^2
 =p\sum_{\chi\in\widehat{\Fps}}
      |c_\phi(\chi)|^2|\widehat{A_1}(\chi)|^2
 \geq p\alpha_\phi^2\norm{A_1}_F^2.
        \tag{4}
\]

For the second block, the same coordinate formula identifies \(F_2\) with
the coefficient transform of the irreducible \(n\)-dimensional representation
\(\pi\), with matrix coefficient \(A_2(C'_\phi)^*\).  Schur orthogonality
therefore gives
\[
 \norm{F_2}_2^2
 =\frac{|G|}{n}\norm{A_2(C'_\phi)^*}_F^2
 =p\norm{A_2(C'_\phi)^*}_F^2
 \geq p\beta_\phi^2\norm{A_2}_F^2,
        \tag{5}
\]
where the last inequality uses (1).  Combining (3)--(5), and using the
unitarity of \(S\), proves the asserted bound.
\end{proof}

\section{The Bartusel--F\"uhr--Oussa Generator}

We now analyze the explicit vector
\[
        \phi(1)=0,\qquad
        \phi(m)=\frac{1}{\sqrt{n-1}}\quad (m\neq 1).
\]
This is the qualitative example used in \cite{BartuselFuhrOussa2023}.  The
new point is that both blocks of the criterion have computable singular-value
lower bounds.

\begin{lemma}[The character block]
\label{lem:character}
For the above vector \(\phi\),
\[
        c_\phi(\mathbf 1)=1,
        \qquad
        |c_\phi(\chi)|=\frac{1}{n-1}
        \quad
        \text{for every nontrivial }\chi\in\widehat{\Fps}.
\]
Thus \(\alpha_\phi=1/(n-1)\).
\end{lemma}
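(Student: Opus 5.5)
Since \(\phi\) is real, the definition gives
\[
 c_\phi(\chi)=\sum_{m\in\Fps}\phi(-m)^2\chi(m).
\]
The weight \(\phi(-m)^2\) equals \(1/(n-1)\) for every \(m\neq -1\) and vanishes at \(m=-1\), because \(-m=1\) exactly when \(m=-1\). The plan is to write this weight as a constant function minus a point mass. For every \(m\in\Fps\),
\[
 \phi(-m)^2=\frac{1}{n-1}\bigl(1-\delta_{-1}(m)\bigr),
\]
where \(\delta_{-1}\) is the indicator of \(\{-1\}\). We then sum this against \(\chi\) and use character orthogonality on the cyclic group \(\Fps\) of order \(n\).

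For the trivial character we get \(c_\phi(\mathbf 1)=(n-(1))/(n-1)=1\). This is consistent with \(\norm{\phi}_2^2=(n-1)\cdot\frac1{n-1}=1\).

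For nontrivial \(\chi\), orthogonality gives \(\sum_m\chi(m)=0\). Hence
\[
 c_\phi(\chi)=\frac{1}{n-1}\Bigl(0-\chi(-1)\Bigr)=-\frac{\chi(-1)}{n-1}.
\]
Since \(\chi(-1)=\pm1\) (because \((-1)^2=1\)), and in any case has modulus one, we conclude \(|c_\phi(\chi)|=1/(n-1)\).

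Finally, \(\alpha_\phi\) is the minimum of \(|c_\phi(\chi)|\) over all characters. The candidates are \(1\) at the trivial character and \(1/(n-1)\) at every nontrivial one. There are nontrivial characters because \(n=p-1\geq 4\) when \(p\geq5\), and \(1/(n-1)<1\) since \(n\geq3\). So the minimum is \(1/(n-1)\).

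There is no real obstacle here. The only point needing care is tracking the sign convention \(\phi(-m)\) versus \(\phi(m)\) in the definition of \(c_\phi\): the vanishing coordinate moves from \(m=1\) to \(m=-1\). This affects only the unimodular phase \(\chi(-1)\), not the modulus.
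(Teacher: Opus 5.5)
Your proposal is correct and follows the paper's proof: both subtract the point mass at \(m=-1\) and use \(\sum_{m\in\Fps}\chi(m)=0\) to get \(c_\phi(\chi)=-\chi(-1)/(n-1)\). You also spell out why the minimum defining \(\alpha_\phi\) is attained at a nontrivial character, namely that nontrivial characters exist and \(1/(n-1)<1\), which the paper leaves implicit.
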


\begin{proof}
The trivial character gives \(c_\phi(\mathbf 1)=\norm{\phi}_2^2=1\).  If
\(\chi\) is nontrivial, then
\[
        c_\phi(\chi)
        =
        \frac{1}{n-1}\sum_{\substack{m\in\Fps\\ -m\neq 1}}\chi(m)
        =
        -\frac{\chi(-1)}{n-1},
\]
because \(\sum_{m\in\Fps}\chi(m)=0\).  Taking absolute values proves the
claim.
\end{proof}

\begin{lemma}[The \(B\)-block is a path-complement matrix]
\label{lem:path}
Let \(E\) be the \(n\times(n-1)\) unoriented incidence matrix of the path graph
on \(n\) vertices.  Then \(B_\phi\) is obtained from
\[
        \frac{1}{n-1}(\mathbf 1-E)
\]
by row and column permutations.  Here \(\mathbf 1\) denotes the all-ones
\(n\times(n-1)\) matrix.  Consequently
\[
        \sigma_{\min}(B_\phi)
        \geq
        \frac{2}{n-1}\sin\left(\frac{\pi}{2n}\right).
\]
\end{lemma}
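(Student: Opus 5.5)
The plan has two parts. First, identify the zero pattern of \(B_\phi\) explicitly, which yields the permutation to \(\tfrac{1}{n-1}(\mathbf 1-E)\). Second, bound the least singular value through the Gram matrix \((\mathbf 1-E)^T(\mathbf 1-E)\).

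\emph{Step 1: the zero pattern.} Since \(\phi\) equals \(1/\sqrt{n-1}\) off the coordinate \(1\) and vanishes at \(1\), every entry \(B_\phi(m,t)=\phi(mt)\phi(m(t+1))\) is either \(1/(n-1)\) or \(0\). It is \(0\) exactly when \(mt=1\) or \(m(t+1)=1\).

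Reindex the rows by \(u=m^{-1}\in\Fps\). Then row \(u\) has zeros in the columns \(t=u\) and \(t=u-1\), restricted to those lying in the admissible column set \(\Fp\setminus\{0,-1\}\). Identify this column set with \(\{1,\dots,p-2\}\) and the rows with \(u\in\{1,\dots,p-1\}\), using the integer representatives. The boundary rows then behave as follows:
\begin{itemize}
\item row \(u=1\) has its only zero in column \(1\), since column \(0\) is excluded;
\item row \(u=p-1\) has its only zero in column \(p-2\), since column \(-1\) is excluded;
\item every other row \(u\) has zeros in the two adjacent columns \(u-1\) and \(u\).
\end{itemize}
Thus column \(j\) has zeros exactly in rows \(j\) and \(j+1\). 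This is precisely the unoriented incidence matrix \(E\) of the path \(1-2-\cdots-n\), with column \(j\) corresponding to the edge \(\{j,j+1\}\). The zero pattern of \(B_\phi\) is therefore \(E\), and after these row and column permutations \(B_\phi=\tfrac1{n-1}(\mathbf 1-E)\). Permutations do not change singular values.

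\emph{Step 2: the Gram matrix.} Write \(J\) for the \(n\times(n-1)\) all-ones matrix \(\mathbf 1\), and \(J_{n-1}\) for the \((n-1)\times(n-1)\) all-ones matrix. Then
\[
(J-E)^T(J-E)=J^TJ-J^TE-E^TJ+E^TE .
\]
The terms evaluate as follows:
\begin{itemize}
\item \(J^TJ=nJ_{n-1}\);
\item \(E^TJ=J^TE=2J_{n-1}\), because every column of \(E\) contains exactly two ones.
\end{itemize}
Hence
\[
(J-E)^T(J-E)=(n-4)J_{n-1}+E^TE .
\]
Since \(p\ge5\), we have \(n\ge4\), so \((n-4)J_{n-1}\) is positive semidefinite. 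Therefore
\[
\lambda_{\min}\big((J-E)^T(J-E)\big)\ge\lambda_{\min}(E^TE).
\]

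\emph{Step 3: the spectrum of \(E^TE\).} The matrix \(E^TE=2I+A_{P_{n-1}}\), where \(A_{P_{n-1}}\) is the adjacency matrix of the path on \(n-1\) vertices (the line graph of \(P_n\)). The eigenvalues of \(A_{P_{n-1}}\) are \(2\cos(k\pi/n)\) for \(k=1,\dots,n-1\). This follows from the standard sine eigenvectors \(v_j=\sin(jk\pi/n)\), which I would quote or verify in one line. The smallest eigenvalue of \(E^TE\) is therefore
\[
2-2\cos(\pi/n)=4\sin^2\!\big(\pi/(2n)\big).
\]
Taking square roots and dividing by \(n-1\) gives
\[
\sigma_{\min}(B_\phi)\ge\frac{2}{n-1}\sin\!\Big(\frac{\pi}{2n}\Big).
\]

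\emph{Main obstacle.} The only delicate point is the bookkeeping in Step 1. One must check that the inversion \(m\mapsto m^{-1}\) together with the excluded columns \(0,-1\) yields exactly the path, with no wrap-around producing a cycle. The cut at columns \(0\) and \(-1\) is what breaks the cycle into a path. The spectral part is routine.
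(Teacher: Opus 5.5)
Your proposal is correct and follows the paper's proof essentially step for step. Your row reindexing $u=m^{-1}$ is the paper's vertex relabelling $v_j=1/j$, which turns the zero pattern into the path incidence matrix. The Gram identity $(n-4)J+2I+A_{P_{n-1}}$, the positive semidefiniteness of the $J$-term for $n\ge 4$ (i.e.\ $p\ge5$), and the path spectrum $2\cos(k\pi/n)$ are exactly the paper's argument.
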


\begin{proof}
For \(t\in\Fp\setminus\{0,-1\}\), the column \(B_\phi(\cdot,t)\) vanishes
exactly at the two rows
\[
        m=t^{-1},\qquad m=(t+1)^{-1}.
\]
At all other rows it equals \(1/(n-1)\).  Thus the zero pattern is the
incidence pattern of the graph on \(\Fps\) whose edges are
\[
        \{t^{-1},(t+1)^{-1}\},
        \qquad t\in\Fp\setminus\{0,-1\}.
\]
It remains only to identify this graph.  For each
\(j\in\{1,\ldots,p-2\}\subset \Fp\), the column indexed by \(t=j\) gives the
edge
\[
        \left\{\frac1j,\frac1{j+1}\right\}.
\]
Thus, after the vertex relabelling
\[
        v_j=\frac1j,\qquad j=1,\ldots,p-1,
\]
the edge set is exactly
\[
        \{v_j,v_{j+1}\},\qquad j=1,\ldots,p-2.
\]
This is the path graph on \(n=p-1\) vertices.

Let \(C=\mathbf 1-E\).  Since every column of \(E\) has two ones,
\[
        C^*C
        =
        nJ-2J-2J+E^*E
        =
        (n-4)J+E^*E,
\]
where \(J\) is the all-ones matrix of size \(n-1\).  For a path incidence
matrix,
\[
        E^*E=2I+A_{P_{n-1}},
\]
where \(A_{P_{n-1}}\) is the adjacency matrix of the path on \(n-1\) vertices.
The rank-one term \((n-4)J\) is positive semidefinite for \(n\geq4\), so
\[
        \lambda_{\min}(C^*C)
        \geq
        \lambda_{\min}(2I+A_{P_{n-1}}).
\]
The eigenvalues of \(A_{P_{n-1}}\) are
\[
        2\cos\left(\frac{k\pi}{n}\right),
        \qquad k=1,\ldots,n-1.
\]
Hence
\[
        \lambda_{\min}(2I+A_{P_{n-1}})
        =
        2-2\cos\left(\frac{\pi}{n}\right)
        =
        4\sin^2\left(\frac{\pi}{2n}\right).
\]

Since \(B_\phi=C/(n-1)\), the stated singular-value lower bound follows.
\end{proof}

\begin{lemma}[Matching order for the \(B\)-block]
\label{lem:pathupper}
For the same vector \(\phi\),
\[
        \sigma_{\min}(B_\phi)
        \leq
        \frac{\sqrt3\,\pi}{(n-1)n}.
\]
\end{lemma}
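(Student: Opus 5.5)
By Lemma~\ref{lem:path}, \(B_\phi\) equals \(C/(n-1)\) up to row and column permutations, where \(C=\mathbf 1-E\). So it suffices to prove
\[
\sigma_{\min}(C)\leq \frac{\sqrt3\,\pi}{n}.
\]
The plan is to exhibit a single unit test vector \(x\in\mathbb R^{n-1}\) with small Rayleigh quotient \(x^*C^*Cx\). We use the identity from the proof of Lemma~\ref{lem:path},
\[
C^*C=(n-4)J+E^*E .
\]
If \(x\perp\mathbf 1\), the \(J\)-term drops out and \(\|Cx\|^2=\|Ex\|^2\). For the path incidence matrix, \((Ex)_v\) is the sum of the entries of \(x\) on the edges incident to vertex \(v\). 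For vertex \(v_j\) this is
\[
(Ex)_{v_j}=x_{j-1}+x_j,\qquad x_0=x_n=0,
\]
so
\[
\|Ex\|^2=\sum_{j=1}^{n}(x_{j-1}+x_j)^2 .
\]
This is small when \(x\) alternates in sign with slowly varying modulus.

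The natural choice is \(x_j=(-1)^j s_j\) with \(s_j=\sin(\pi j/n)\), \(j=1,\dots,n-1\). This is the bottom eigenvector of \(2I+A_{P_{n-1}}\), with eigenvalue \(4\sin^2(\pi/2n)\). However, it must also satisfy \(x\perp\mathbf 1\), which requires \(\sum_j(-1)^j s_j=0\). This holds by the reflection symmetry \(j\mapsto n-j\) exactly when \(n\) is odd. Since \(n=p-1\) is even, this vector need not be orthogonal to \(\mathbf 1\).

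So instead I would take \(s_j\) to be an antisymmetric profile, \(s_{n-j}=-s_j\). Then \(x_{n-j}=(-1)^{n-j}s_{n-j}=-x_j\) because \(n\) is even, and this forces \(\sum_j x_j=0\). Concretely, take
\[
s_j=\sin(2\pi j/n),\qquad x_j=(-1)^j\sin(2\pi j/n).
\]
This is the eigenvector of \(2I+A_{P_{n-1}}\) with eigenvalue
\[
2-2\cos(2\pi/n)=4\sin^2(\pi/n).
\]
Using \(\|Cx\|^2=\|Ex\|^2=x^*(E^*E)x\) for \(x\perp\mathbf 1\), we get
\[
\frac{\|Cx\|}{\|x\|}=2\sin(\pi/n)\leq \frac{2\pi}{n}.
\]
That bound exceeds \(\sqrt3\pi/n\approx 1.732\pi/n\), so this vector is not good enough. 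A lower-frequency test vector is needed.

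The better choice is a piecewise-linear "sawtooth" alternating vector \(x_j=(-1)^j(n-2j)\), which is antisymmetric and hence orthogonal to \(\mathbf 1\). Consecutive sums are constant:
\[
x_{j-1}+x_j=\pm 2\quad\text{for } 2\leq j\leq n-1,
\]
while the two boundary terms are \(x_1^2=x_{n-1}^2=(n-2)^2\), which is large. To kill those boundary terms, I would use a profile vanishing at the ends, for example \(s_j=j(n-j)(n-2j)\). This is the main computation: evaluating sums of polynomials over \(j\) and optimising. The alternative is to write \(s\) in the sine basis, \(\sum a_k\sin(\pi kj/n)\), restricted to even \(k\) (the antisymmetric ones), where the constraint \(\sum_j x_j=0\) becomes automatic. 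One then picks the combination minimising the Rayleigh quotient of \(E^*E\) on \(\mathbf 1^\perp\), treating the rank-one correction exactly. The odd modes \(\sin(\pi kj/n)\) have nonzero alternating sum, and the exact minimiser of \(x^*E^*Ex\) over \(x\perp\mathbf 1\) mixes the \(k=1\) mode with the \(k=3,5,\dots\) modes. A two-mode combination of \(k=1\) and \(k=3\) with zero alternating sum gives a Rayleigh quotient
\[
\frac{a^2\lambda_1+b^2\lambda_3}{a^2+b^2},\qquad \lambda_k=4\sin^2\!\left(\frac{k\pi}{2n}\right)\approx \frac{k^2\pi^2}{n^2}.
\]
Here \(b/a\) is fixed by the ratio of the alternating sums, which are of order \(\cot(\pi k/2n)\approx 2n/(\pi k)\), so \(b\approx-3a\). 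The asymptotic quotient is then roughly \((1+81)/(1+9)\cdot\pi^2/n^2\), which is too large.

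Because of this, the cleanest route is probably the polynomial profile \(s_j=j(n-j)(n-2j)\), with \(\|Cx\|^2/\|x\|^2\) computed exactly by power sums. The target is a constant of at most \(3\pi^2\), and I expect this evaluation to be the main obstacle. If that constant fails, I would fall back on a constrained eigenvalue computation: the exact \(\lambda_{\min}\) of \(E^*E\) restricted to \(\mathbf 1^\perp\), obtained from a secular equation \(\sum_k c_k^2/(\lambda_k-\mu)=0\) in the even-\(n\) case. The root \(\mu\) lies between \(\lambda_1\) and \(\lambda_3\), and the remaining task is to bound it by \(3\pi^2/n^2\).
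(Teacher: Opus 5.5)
Your proposal never produces a test vector with Rayleigh quotient at most \(3\pi^2/n^2\), and the route you end up recommending cannot succeed. The missing observation is that exact orthogonality to \(\mathbf 1\) is not needed. For the natural vector \(z_j=(-1)^j\sin(j\pi/n)\), the rank-one term is already small. Summing the geometric series for even \(n\) gives
\[
\Bigl|\sum_{j=1}^{n-1}(-1)^j\sin\Bigl(\frac{j\pi}{n}\Bigr)\Bigr|=\tan\Bigl(\frac{\pi}{2n}\Bigr)\leq\frac{\pi}{n},
\]
and \(\|z\|^2=n/2\), so \((n-4)|\sum_jz_j|^2/\|z\|^2\leq 2\pi^2/n^2\). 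Adding \(z^*E^*Ez/\|z\|^2=4\sin^2(\pi/(2n))\leq\pi^2/n^2\) gives \(\|Cz\|^2\leq(3\pi^2/n^2)\|z\|^2\), which is exactly the paper's proof.

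You discarded \(z\), and misjudged your two-mode mixture, because you took the relevant sums to be of order \(\cot(k\pi/(2n))\approx 2n/(\pi k)\). That is the plain sum \(\sum_j\sin(k\pi j/n)\) for odd \(k\). The inner product of the alternating mode \((-1)^j\sin(k\pi j/n)\) with \(\mathbf 1\) is \(-\tan(k\pi/(2n))=O(k/n)\). With the correct values, your \(k=1,3\) combination has \(b/a\to-1/3\) and Rayleigh quotient asymptotic to \(\tfrac95\pi^2/n^2\), so that route was also viable.

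The antisymmetric strategy, by contrast, is provably doomed rather than merely laborious. Since \(n\) is even, the vectors with \(x_{n-j}=-x_j\) are exactly the span of the even-\(k\) modes \((-1)^j\sin(k\pi j/n)\). Every such \(x\) therefore satisfies \(\|Cx\|^2/\|x\|^2\geq 4\sin^2(\pi/n)\). Moreover \(4\sin^2(\pi/n)>3\pi^2/n^2\) for all \(n\geq4\), because \(\sin u/u\geq 2\sqrt2/\pi>\sqrt3/2\) on \((0,\pi/4]\). In particular, your cubic profile \(s_j=j(n-j)(n-2j)\) has Rayleigh quotient asymptotic to \(42/n^2>3\pi^2/n^2\), and the sawtooth is far worse.

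The secular-equation fallback could be pushed through: the minimum of \(x^*E^*Ex\) over \(\mathbf 1^\perp\) is in fact \(4\sin^2(\pi/(2n))+O(n^{-4})\), because a tiny multiple of the smooth high-\(k\) modes cancels the \(O(1/n)\) inner product cheaply. As written, however, that step is left undone and rests on the same incorrect coefficients \(c_k\).
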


\begin{proof}
Keep the notation \(B_\phi=C/(n-1)\) and
\[
        C^*C=(n-4)J+2I+A_{P_{n-1}}
\]
from Lemma \ref{lem:path}.  Let
\[
        z_j=(-1)^j\sin\left(\frac{j\pi}{n}\right),
        \qquad j=1,\ldots,n-1.
\]
Then \(z\) is an eigenvector of \(A_{P_{n-1}}\) with eigenvalue
\(-2\cos(\pi/n)\), and
\[
        \norm{z}_2^2=\frac n2.
\]
Thus
\[
        \frac{z^*(2I+A_{P_{n-1}})z}{\norm{z}_2^2}
        =
        4\sin^2\left(\frac{\pi}{2n}\right)
        \leq
        \frac{\pi^2}{n^2}.
\]
If \(n\) is odd, then \(\sum_j z_j=0\).  If \(n\) is even, the geometric-sum
identity gives
\[
        \left|\sum_{j=1}^{n-1}(-1)^j
        \sin\left(\frac{j\pi}{n}\right)\right|
        =
        \tan\left(\frac{\pi}{2n}\right).
\]
Since \(n\geq4\), \(\tan x\leq 2x\) for \(0\leq x\leq \pi/8\).  Hence in all
cases
\[
        \frac{(n-4)|\sum_j z_j|^2}{\norm{z}_2^2}
        \leq
        \frac{2\pi^2}{n^2}.
\]
Therefore
\[
        \frac{z^*C^*Cz}{\norm{z}_2^2}
        \leq
        \frac{3\pi^2}{n^2}.
\]
It follows that \(\sigma_{\min}(C)\leq \sqrt3\pi/n\), and the claim follows
from \(B_\phi=C/(n-1)\).
\end{proof}

\begin{proof}[Proof of Theorem \ref{thm:zero}]
Lemma \ref{lem:character} gives
\(\alpha_\phi=1/(n-1)\), while Lemma \ref{lem:path} gives
\[
        \beta_\phi\geq
        \frac{2}{n-1}\sin\left(\frac{\pi}{2n}\right).
\]
For \(n\geq4\), the latter is smaller.  Proposition \ref{prop:block} therefore
gives
\[
        \norm{\mathcal M_\phi(A)}_2
        \geq
        \sqrt p\,\frac{2}{n-1}
        \sin\left(\frac{\pi}{2n}\right)\norm{A}_F.
\]
For the upper estimate, restrict the measurement map to the \(K_2\)-block in
the proof of Proposition \ref{prop:block}.  On this block,
\[
        \norm{\mathcal M_\phi(A)}_2
        =
        \sqrt p\,\norm{A_2B_\phi^*}_F.
\]
Choose \(A_2\) to be a rank-one matrix aligned with a right singular vector of
\(B_\phi\) for \(\sigma_{\min}(B_\phi)\).  Lemma \ref{lem:pathupper} then gives
a nonzero matrix \(A\) satisfying
\[
        \norm{\mathcal M_\phi(A)}_2
        \leq
        \frac{\sqrt3\,\pi\sqrt p}{(n-1)n}\norm{A}_F.
\]
\end{proof}

\section{A Weighted Two-Level Generator}

The zero in the Bartusel--F\"uhr--Oussa vector is useful for proving
injectivity, but it is not a good choice for conditioning.  The same path
structure permits a direct deterministic repair.

\begin{lemma}[Two-level reduction]
\label{lem:twolevel}
Let \(r\geq0\), put \(b_r=(n-1+r^2)^{-1/2}\), and define
\[
        \phi_r(1)=r b_r,\qquad
        \phi_r(m)=b_r\quad(m\neq1).
\]
After row and column permutations,
\[
 B_{\phi_r}=b_r^2\bigl(\mathbf1+(r-1)E\bigr),
        \tag{6}
\]
where \(E\) is the unoriented incidence matrix of the path \(P_n\) used in
Lemma \ref{lem:path}.  Consequently, with \(s=r-1\),
\[
 \bigl(\mathbf1+sE\bigr)^*\bigl(\mathbf1+sE\bigr)
 =
 (n+4s)J+s^2(2I+A_{P_{n-1}}).
        \tag{7}
\]
\end{lemma}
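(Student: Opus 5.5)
The plan is to reuse the zero-pattern analysis from the proof of Lemma~\ref{lem:path} almost verbatim, and then expand the Gram matrix.

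First I compute the entries of \(B_{\phi_r}\). By definition \(B_{\phi_r}(m,t)=\phi_r(mt)\phi_r(m(t+1))\) for \(t\in\Fp\setminus\{0,-1\}\). The two arguments \(mt\) and \(m(t+1)\) differ by \(m\neq0\), so they cannot both equal \(1\). Hence each entry falls into exactly one of two cases:
\[
B_{\phi_r}(m,t)=\begin{cases} r\,b_r^2 & \text{if } m\in\{t^{-1},(t+1)^{-1}\},\\ b_r^2 & \text{otherwise.}\end{cases}
\]
Equivalently, \(B_{\phi_r}(m,t)=b_r^2\bigl(1+(r-1)\,[\,m\in\{t^{-1},(t+1)^{-1}\}\,]\bigr)\). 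The indicator here is exactly the incidence pattern identified in the proof of Lemma~\ref{lem:path}. Under the relabelling \(v_j=1/j\) (rows) and \(t=j\) (columns), with \(j=1,\dots,p-2\), that pattern becomes the unoriented incidence matrix \(E\) of the path \(P_n\). Applying the same row and column permutations gives (6). The all-ones matrix \(\mathbf 1\) is invariant under these permutations, which is why the decomposition survives.

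For (7), I expand
\[
(\mathbf1+sE)^*(\mathbf1+sE)=\mathbf1^*\mathbf1+s(\mathbf1^*E+E^*\mathbf1)+s^2E^*E .
\]
There are three terms to evaluate:
\begin{itemize}
\item Since \(\mathbf1\) has \(n\) rows, \(\mathbf1^*\mathbf1=nJ\).
\item Every column of \(E\) has exactly two ones, so every entry of \(\mathbf1^*E\) is a column sum of \(E\), namely \(2\). Thus \(\mathbf1^*E=2J\), and by symmetry \(E^*\mathbf1=2J\), so the middle term is \(4sJ\).
\item As in Lemma~\ref{lem:path}, \(E^*E=2I+A_{P_{n-1}}\). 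The diagonal entry \(2\) records the two endpoints of each edge. The off-diagonal entry is \(1\) exactly when two consecutive edges of \(P_n\) share a vertex, and this is the adjacency of the line graph \(P_{n-1}\).
\end{itemize}
Summing gives \((n+4s)J+s^2(2I+A_{P_{n-1}})\).

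No step is a real obstacle. The only point needing care is the observation that \(mt\) and \(m(t+1)\) are never simultaneously \(1\), so that no entry involves \(r^2\). That observation is what makes \(B_{\phi_r}\) affine in \(r\) with the same support pattern \(E\) as before.
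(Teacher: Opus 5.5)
Your proof is correct and follows essentially the same route as the paper. You identify the two incidence entries of each column as \(rb_r^2\) and all others as \(b_r^2\), then expand the Gram matrix via \(\mathbf1^*\mathbf1=nJ\), \(\mathbf1^*E=E^*\mathbf1=2J\), and \(E^*E=2I+A_{P_{n-1}}\). Your remark that \(mt\) and \(m(t+1)\) cannot both equal \(1\) makes explicit a point the paper leaves implicit. One minor slip: the row relabelling \(v_j=1/j\) runs over \(j=1,\dots,p-1\); only the column index stops at \(p-2\).
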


\begin{proof}
For each column indexed by \(t\), the two rows \(t^{-1}\) and
\((t+1)^{-1}\) correspond to the two incidences of the associated path edge.
At these two entries \(B_{\phi_r}\) has value \(rb_r^2\), and at every other
entry it has value \(b_r^2\).  This proves (6).  Since every column of \(E\)
has two ones,
\[
 \mathbf1^*\mathbf1=nJ,\qquad
 \mathbf1^*E=E^*\mathbf1=2J,\qquad
 E^*E=2I+A_{P_{n-1}},
\]
which proves (7).
\end{proof}

\begin{theorem}[An explicit \(O(d)\)-stable affine orbit]
\label{thm:main}
Let \(p\geq5\) be prime, \(n=p-1\), and set
\[
 \phi_\star(1)=\frac{1+\sqrt n}{\sqrt{2n+2\sqrt n}},
 \qquad
 \phi_\star(m)=\frac{1}{\sqrt{2n+2\sqrt n}}
 \quad(m\neq1).
\]
Then, for every \(A\in\C^{\Fps\times\Fps}\),
\[
 \norm{\mathcal M_{\phi_\star}(A)}_{\ell^2(G)}
 \geq
 \frac{\sqrt p}{\sqrt n+1}
 \sin\left(\frac{\pi}{2n}\right)\norm{A}_F.
        \tag{8}
\]
In particular, the inverse on the range has norm at most \(2d\).
Conversely, there is a nonzero \(A\) such that
\[
 \norm{\mathcal M_{\phi_\star}(A)}_{\ell^2(G)}
 \leq
 \frac{\pi\sqrt{5p/2}}{2n(\sqrt n+1)}\norm{A}_F.
        \tag{9}
\]
Thus the least singular value of \(\mathcal M_{\phi_\star}\) is of order
\(d^{-1}\), and this order is sharp for \(\phi_\star\).
\end{theorem}

\begin{proof}
This is the choice \(r=1+\sqrt n\) in Lemma \ref{lem:twolevel}; hence
\[
        b_r^2=\frac{1}{2n+2\sqrt n},
        \qquad
        B_{\phi_\star}=b_r^2(\mathbf1+\sqrt n E).
        \tag{10}
\]
For a nontrivial multiplicative character,
\[
 c_{\phi_\star}(\chi)
 =
 \bigl(\phi_\star(1)^2-\phi_\star(m)^2\bigr)\chi(-1),
\]
and therefore
\[
 \alpha_{\phi_\star}
 =
 \frac{n+2\sqrt n}{2n+2\sqrt n}
 =
 \frac{\sqrt n+2}{2(\sqrt n+1)}.
        \tag{11}
\]
By (7), the Gram matrix in (10) is bounded below by
\[
        n(2I+A_{P_{n-1}}),
\]
because its \(J\)-coefficient \(n+4\sqrt n\) is nonnegative.  The path
spectrum used in Lemma \ref{lem:path} now gives
\[
 \beta_{\phi_\star}
 \geq
 2b_r^2\sqrt n\sin\left(\frac{\pi}{2n}\right)
 =
 \frac{1}{\sqrt n+1}\sin\left(\frac{\pi}{2n}\right).
        \tag{12}
\]
The quantity in (12) is smaller than (11).  Proposition \ref{prop:block}
therefore proves (8).  Since
\(\sin(\pi/(2n))\geq1/n\) and
\(\sqrt{n+1}/(\sqrt n+1)\geq1/2\), the lower bound in (8) is at least
\((2n)^{-1}\), proving the inverse estimate.

It remains to prove the matching upper estimate.  Let
\[
        z_j=(-1)^j\sin\left(\frac{j\pi}{n}\right),
        \qquad j=1,\ldots,n-1.
\]
As in Lemma \ref{lem:pathupper},
\[
 \norm z_2^2=\frac n2,\qquad
 \frac{z^*(2I+A_{P_{n-1}})z}{\norm z_2^2}
 \leq\frac{\pi^2}{n^2}.
        \tag{13}
\]
Moreover, \(\sum_jz_j=0\) for odd \(n\), while for even \(n\) its absolute
value is \(\tan(\pi/(2n))\leq\pi/n\).  Hence
\[
 \frac{(n+4\sqrt n)|\sum_jz_j|^2}{\norm z_2^2}
 \leq\frac{6\pi^2}{n^2}.
        \tag{14}
\]
Equations (7), (13), and (14) yield
\[
 \frac{z^*(\mathbf1+\sqrt nE)^*(\mathbf1+\sqrt nE)z}{\norm z_2^2}
 \leq
 \frac{5\pi^2}{2n}.
\]
Together with (10), this implies
\[
 \beta_{\phi_\star}
 \leq
 \frac{\pi\sqrt{5/2}}{2n(\sqrt n+1)}.
\]
Finally restrict the measurement operator to the \(K_2\)-block and choose a
rank-one matrix aligned with a right singular vector for this least singular
value.  The exact block identity (5) gives (9).
\end{proof}

\begin{theorem}[Optimal stability order in the two-level family]
\label{thm:twoleveloptimal}
For \(r\geq0\), let \(\phi_r\) be as in Lemma \ref{lem:twolevel}, and let
\(\gamma_r\) denote the least singular value of \(\mathcal M_{\phi_r}\).  Then
\[
 \gamma_r\geq
 2\sqrt p\,\frac{|r-1|}{n-1+r^2}
 \sin\left(\frac{\pi}{2n}\right).
        \tag{15}
\]
For every \(r\geq0\), one also has
\[
        \gamma_r\leq\frac{3\pi}{n}.
        \tag{16}
\]
Moreover,
\[
 \max_{r\geq0}\frac{|r-1|}{n-1+r^2}
 =
 \max\left\{\frac1{n-1},\frac1{2(\sqrt n+1)}\right\}.
        \tag{17}
\]
The first value in (17) is attained at \(r=0\), and the second at
\(r=1+\sqrt n\).  In particular, for \(n\geq10\), the vector
\(\phi_\star\) has the optimal stability order \(d^{-1}\) within the entire
two-level family.
\end{theorem}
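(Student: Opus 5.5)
The plan has three parts: the lower bound (15) from Proposition \ref{prop:block} applied blockwise, the upper bound (16) from an explicit test matrix in the $K_2$-block, and the calculus problem (17).

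\textbf{Lower bound (15).} For the character block, Lemma \ref{lem:character} generalizes directly. For nontrivial $\chi$ we have $c_{\phi_r}(\chi)=b_r^2(r^2-1)\chi(-1)$, while $c_{\phi_r}(\mathbf 1)=1$. Hence
\[
\alpha_{\phi_r}=\min\{1,\,|r^2-1|\,b_r^2\}=|r-1|(r+1)b_r^2 ,
\]
and this dominates $|r-1|b_r^2\cdot 2\sin(\pi/(2n))$ because $r+1\ge1\ge 2\sin(\pi/(2n))$. For the $B$-block I use (6) and (7) with $s=r-1$. The $J$-coefficient $n+4s$ is nonnegative whenever $s\ge -1$ and $n\ge4$, which holds since $r\ge0$. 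So the Gram matrix dominates $s^2(2I+A_{P_{n-1}})$, and the path spectrum gives
\[
\beta_{\phi_r}\ge 2b_r^2|r-1|\sin\!\left(\tfrac{\pi}{2n}\right).
\]
Proposition \ref{prop:block} then yields (15).

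\textbf{Upper bound (16).} I restrict to the $K_2$-block as in the proof of Theorem \ref{thm:main}, where $\gamma_r\le \sqrt p\,\beta_{\phi_r}$. I test the Gram matrix in (7) on the alternating-sine vector $z$ from Lemma \ref{lem:pathupper}. Equation (13) bounds the $s^2$ term by $s^2\pi^2/n^2$. For the $J$ term, $(n+4s)|\sum z_j|^2/\|z\|^2\le (n+4|s|)\cdot 2\pi^2/n^3$. Multiplying by $b_r^4$, the required estimate reduces to the elementary inequalities
\[
b_r^2|s|\le \frac{C}{\sqrt n}, \qquad b_r^4\, n \le \frac{C}{n}.
\]
Both follow from AM--GM: $|r-1|\le r+1$ and $(r+1)^2\le 2(r^2+1)$, and $n-1+r^2\ge 2r\sqrt{n-1}$. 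Combining these gives $\beta_{\phi_r}^2\lesssim \pi^2/n^3$, and hence, using $\sqrt p\approx\sqrt n$, a bound of the form $\gamma_r\le c\pi/n$. The only delicate point is to track constants so that $c\le 3$ for all $n\ge4$. I expect this bookkeeping, especially for small $n$, to be the main obstacle. If uniform tracking fails, I will split into the cases $r\le 1+2\sqrt n$ and $r$ large.

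\textbf{Maximization (17).} Let $f(r)=|r-1|/(n-1+r^2)$. On $[0,1]$ the numerator decreases and the denominator increases, so $f$ is maximal at $r=0$, with value $1/(n-1)$. On $[1,\infty)$ the derivative has numerator $-(r^2-2r-(n-1))$, whose positive root is $r=1+\sqrt n$. At that point $f=\sqrt n/(2n+2\sqrt n)=1/(2(\sqrt n+1))$.

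Comparing the two candidates, $2(\sqrt n+1)<n-1$ exactly when $\sqrt n>3$, that is, $n\ge10$. In that range the maximum is attained at $\phi_\star$. Combined with (16) and the lower bound (15) of order $1/n$ at $r=1+\sqrt n$, this shows that $\phi_\star$ achieves the optimal order $d^{-1}$ within the whole family.
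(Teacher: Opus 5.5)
Your proposal follows the paper's proof essentially step for step: the same character coefficient and path-spectrum bounds (using $n+4s\ge n-4\ge0$) for (15), the same alternating-sine test vector restricted to the $K_2$-block for (16), and the same derivative computation and comparison $2(\sqrt n+1)<n-1\iff n\ge10$ for (17). The constant bookkeeping you flagged as the main obstacle is routine and needs no case split. With $D_r=n-1+r^2$, use $\sqrt{u+v}\le\sqrt u+\sqrt v$ together with the bounds $|r-1|/D_r\le 1/\sqrt{n-1}$ and $\sqrt{n+4r-4}/D_r\le\sqrt2/\sqrt{n-1}$. This gives $\gamma_r\le\frac{\pi}{n}\sqrt{\frac{n+1}{n-1}}\bigl(1+\frac{2}{\sqrt n}\bigr)\le\frac{3\pi}{n}$ for all $n\ge4$, which is exactly how the paper concludes.
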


\begin{proof}
Put \(D_r=n-1+r^2\) and \(s=r-1\).  For nontrivial characters,
\[
        |c_{\phi_r}(\chi)|
        =
        \frac{|r^2-1|}{D_r}.
\]
The right side is at least
\(2|r-1|\sin(\pi/(2n))/D_r\), because \(r+1\geq1\) and
\(2\sin(\pi/(2n))<1\).  The same lower bound for the \(B\)-block follows
from (7), since \(n+4s\geq n-4\geq0\):
\[
 \sigma_{\min}(B_{\phi_r})
 \geq
 2b_r^2|s|\sin\left(\frac{\pi}{2n}\right)
 =
 2\frac{|r-1|}{D_r}\sin\left(\frac{\pi}{2n}\right).
\]
Proposition \ref{prop:block} proves (15).

For the upper estimate, use the vector \(z\) from the proof of Theorem
\ref{thm:main}.  Since \(p\) is odd, \(n\) is even, and the geometric-sum
identity used there gives \(|\sum_jz_j|\leq\pi/n\).  Equation (7) yields
\[
 \sigma_{\min}\bigl(\mathbf1+sE\bigr)
 \leq
 \frac{\pi|s|}{n}
 +
 \frac{\pi\sqrt{2(n+4s)}}{n^{3/2}}.
        \tag{18}
\]
The restriction to the \(K_2\)-block and the exact identity (5) therefore
show that
\[
 \gamma_r
 \leq
 \sqrt p\,b_r^2
 \left(
 \frac{\pi|r-1|}{n}
 +
 \frac{\pi\sqrt{2(n+4r-4)}}{n^{3/2}}
 \right).
        \tag{19}
\]
The elementary estimates
\[
 \frac{|r-1|}{D_r}\leq\frac1{\sqrt{n-1}},
 \qquad
 \frac{\sqrt{n+4r-4}}{D_r}\leq\frac{\sqrt2}{\sqrt{n-1}}
\]
hold for all \(r\geq0\) and \(n\geq4\).  Using
\(\sqrt{n+1}/\sqrt{n-1}<3/2\) in (19) gives (16).

Finally, \(f(r)=|r-1|/D_r\) decreases on \([0,1]\).  On \([1,\infty)\),
differentiation shows that its unique maximum occurs at \(r=1+\sqrt n\).
The two resulting values are exactly those in (17).  For \(n\geq10\), the
second is the larger one.  Combining (15), (16), and the estimate
\(\sin(\pi/(2n))\geq1/n\) gives the last assertion.
\end{proof}

\section{Prime-Power Affine Orbits}

The preceding proof is not confined to prime fields.  The representation-theoretic
block calculation is algebraic over a field, while the path graph is replaced by
a graph with an equally explicit finite-geometric decomposition.

Let \(q=p^h\), where \(p\) is an odd prime, and put \(d=q-1\).  Thus \(d\)
is the prime-power analogue of the prime-field notation \(n=p-1\) used
above.  Write
\[
        G_q=\mathbb F_q\rtimes\mathbb F_q^\times,
        \qquad
        (\pi_q(k,l)f)(m)=\psi(km)f(lm),
\]
where \(\psi\) is any nontrivial additive character of \(\mathbb F_q\).
For completeness, \(\pi_q\) is irreducible: the translation subgroup acts
diagonally with the distinct characters \(m\mapsto\psi(km)\); its Fourier
idempotents therefore project any invariant subspace onto the coordinate axes.
Since \(\mathbb F_q^\times\) acts transitively on those axes, a nonzero
invariant subspace contains them all.

\begin{proposition}[Affine block criterion over \(\mathbb F_q\)]
\label{prop:qblock}
For a real unit vector \(\phi\in\mathbb R^{\mathbb F_q^\times}\), define
\[
 c_\phi(\chi)=\sum_{m\in\mathbb F_q^\times}|\phi(-m)|^2\chi(m),
 \qquad
 B_\phi(m,t)=\phi(mt)\phi(m(t+1)).
\]
Here \(\chi\) ranges over the multiplicative characters and
\(m\in\mathbb F_q^\times\), \(t\in\mathbb F_q\setminus\{0,-1\}\).
Then the lower bound is exact:
\[
 \norm{\mathcal M_\phi(A)}_{\ell^2(G_q)}
 \geq
 \sqrt q\,
 \min\left\{\min_\chi|c_\phi(\chi)|,\sigma_{\min}(B_\phi)\right\}
 \norm A_F.
        \tag{20}
\]
Equivalently,
\[
 \sigma_{\min}(\mathcal M_\phi)
 =\sqrt q\min\left\{\min_\chi|c_\phi(\chi)|,
 \sigma_{\min}(B_\phi)\right\}.
 \tag{20a}
\]
\end{proposition}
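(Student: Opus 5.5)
We will redo the block decomposition of Proposition~\ref{prop:block} directly over \(\mathbb F_q\), keeping track of exact isometries so that the lower bound becomes an equality statement. Index matrices by \(\mathbb F_q^\times\times\mathbb F_q^\times\). The orbit coefficient is
\[
\mathcal M_\phi(A)(k,l)=\sum_{m,m'}A(m,m')\psi(k(m'-m))\phi(lm')\phi(lm)
\]
for real \(\phi\). Split \(A\) into its diagonal part \(A_1(m)=A(m,m)\) and its off-diagonal part. Reparametrize the off-diagonal part by \(m'=m(t+1)\) with \(t\in\mathbb F_q\setminus\{0,-1\}\), and set \(A_2(m,t)=A(m,m(t+1))\). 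The map \(A\mapsto A_1\oplus A_2\) is a coordinate bijection, hence unitary. This plays the role of \(S\).

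\textbf{Diagonal block.} Here \(F_1(k,l)=\sum_m A_1(m)\phi(lm)^2\) does not depend on \(k\). It is a multiplicative convolution on \(\mathbb F_q^\times\). The multiplicative Fourier transform is unitary, and it diagonalizes this convolution with multipliers equal to \(c_\phi(\chi)\) up to unimodular factors. The signs \(-m\) in the definition and character conjugation only contribute such factors, so the moduli are unchanged. Summing over the \(q\) values of \(k\) gives exactly
\[
\|F_1\|^2=q\sum_\chi|c_\phi(\chi)|^2|\widehat{A_1}(\chi)|^2 .
\]

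\textbf{Off-diagonal block.} Write \(F_2(k,l)=\sum_{m,t}A_2(m,t)\psi(kmt)\phi(lm(t+1))\phi(lm)\). For fixed \(l\), substitute \(u=lm\). The inner sum over \(m\) for fixed \(t\) then becomes
\[
\sum_u A_2(u/l,t)\,\psi(kut/l)\,\phi(u)\phi(u(t+1)).
\]
Taking the additive Fourier transform in \(k\) (a unitary map up to the factor \(\sqrt q\)) isolates, at frequency \(x\in\mathbb F_q^\times\), the terms with \(ut/l=x\). Fixing \(x\) and \(l\) identifies \(u=lx/t\), so the frequency-\(x\) coefficient is a sum over \(t\) of \(A_2(x/t,t)\) times a \(\phi\)-product that is an entry of \(B_\phi\) after reindexing. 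This product is \(\phi(lx\cdot s)\phi(lx(s+1))\) with \(s=1/t\) up to the reparametrization \(t\mapsto\) its inverse-type relabeling; this is the same permutation used in the prime case. The frequency \(x=0\) receives no off-diagonal contribution, because \(ut\neq 0\). Hence \(F_2\) is orthogonal to the \(k\)-independent functions. Since \(F_1\) is \(k\)-independent, this gives the Pythagorean identity \(\|F\|^2=\|F_1\|^2+\|F_2\|^2\).

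Regroup by \(w=lx\) (ranging over \(\mathbb F_q^\times\)) and by the pair \((x,\cdot)\). Then \(\|F_2\|^2\) equals \(q\) times the squared Frobenius norm of a matrix product \(\widetilde A_2\,(C'_\phi)^*\), where \(\widetilde A_2\) is a permutation of the entries of \(A_2\) and \(C'_\phi\) is \(B_\phi\) with permuted rows and columns. This is the Schur-orthogonality identity (5), obtained here by a direct Plancherel computation. Consequently
\[
\|F_2\|^2\ge q\,\sigma_{\min}(B_\phi)^2\|A_2\|_F^2,
\]
which proves (20).

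\textbf{Exactness (20a).} Both inequalities are attained. For the diagonal block, take \(A_1\) to be a multiplicative character (up to conjugation) at which \(|c_\phi|\) is minimal, with \(A_2=0\). For the off-diagonal block, take \(A_1=0\) and \(\widetilde A_2=e\,v^*\) with \(v\) a right singular vector of \(B_\phi\) for its least singular value, as in the proof of Theorem~\ref{thm:zero}. Since \(\|F\|^2\) splits exactly into the two block contributions, the least singular value of \(\mathcal M_\phi\) is the minimum of the two block values. This gives (20a).

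\textbf{Main obstacle.} The hard step is the bookkeeping in the off-diagonal Plancherel computation. One must check that the relevant index map \((x,l,t)\leftrightarrow\) (row of \(B_\phi\), column, entry of \(A_2\)) is a bijection, so that \(\|F_2\|^2\) is exactly \(q\|\widetilde A_2 C_\phi'^*\|_F^2\) with no cross terms between different \(w\). We will do this first, mirroring the prime-field coordinates of \cite{BartuselFuhrOussa2023}, and use only field arithmetic, so that nothing depends on \(q\) being prime.
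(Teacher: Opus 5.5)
Your proposal is correct and follows essentially the paper's route. You make a unitary relabeling of matrix coordinates into a diagonal block, handled by multiplicative Fourier analysis with multipliers $c_\phi(\chi)$ up to unimodular factors, and an off-diagonal block whose coefficient function has only nonzero additive frequencies; this gives $F_1\perp F_2$ and the exact identity $\|F_2\|_2^2=q\|\widetilde A_2B_\phi^*\|_F^2$, from which both (20) and (20a) follow. The only difference is how that last identity is obtained: the paper checks that its coordinate change $S$ turns the conjugation action into the regular representation plus $d-1$ copies of $\pi_q$ and then applies Schur orthogonality, whereas you use direct additive Plancherel in $k$ (your relabeling $s=1/t$, i.e.\ $\widetilde A_2(x,s)=A(xs,x(1+s))$, is the same difference--ratio parametrization as the paper's $S$, up to sign conventions).
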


\begin{proof}
We give the reduction, since this is the point at which the extension from a
prime field to an arbitrary finite field has to be checked.  Put
\(\rho_q(k,l)A=\pi_q(k,l)A\pi_q(k,l)^*\).  On the matrix space, the
coordinate permutation
\[
 (SA)(m,n)=
 \begin{cases}
 A(-m,-m),&n=1,\\
 A\bigl(m(1-n)^{-1},mn(1-n)^{-1}\bigr),&n\neq1,
 \end{cases}
        \tag{21}
\]
is unitary.  Indeed, for \(n\ne1\), the two coordinates on the right hand
side have difference \(m\) and quotient \(n\), so (21) is a bijective
relabeling of \((\mathbb F_q^\times)^2\).  Write
\(\widetilde\rho_q=S\rho_qS^{-1}\).  Direct substitution gives
\[
 (\widetilde\rho_q(k,l)A)(m,n)=
 \begin{cases}
 A(lm,1),&n=1,\\
 \psi(km)A(lm,n),&n\neq1.
 \end{cases}
        \tag{22}
\]
Thus the first coordinate column is the regular representation of
\(\mathbb F_q^\times\), while the remaining \(d-1\) columns are copies of
the irreducible representation \(\pi_q\).

Apply (21) to \(\phi\phi^*\).  Its first column is
\((|\phi(-m)|^2)_m\).  For \(n\in\mathbb F_q^\times\setminus\{1\}\), put
\(t=n(1-n)^{-1}\).  This is a bijection from
\(\mathbb F_q^\times\setminus\{1\}\) onto
\(\mathbb F_q\setminus\{0,-1\}\), and
\[
 \phi\bigl(m(1-n)^{-1}\bigr)
 \phi\bigl(mn(1-n)^{-1}\bigr)
 =\phi\bigl(m(t+1)\bigr)\phi(mt)=B_\phi(m,t).
 \tag{23}
\]
Hence no unproved identification of the second block is being used: after
this explicit column relabeling it is exactly \(B_\phi\).

Let \(A_1\) and \(A_2\) denote the two pieces of \(SA\), and write
\(\mathcal M_\phi(A)=F_1+F_2\) for the corresponding decomposition of the
coefficient function.  The first block is multiplicative convolution.  Finite Fourier inversion on
\(\mathbb F_q^\times\) gives the exact identity
\[
 \|F_1\|_2^2
 =q\sum_\chi |c_\phi(\chi)|^2|\widehat A_1(\chi)|^2.
 \tag{24}
\]
For the second block, Schur orthogonality for the \(d\)-dimensional
irreducible \(\pi_q\) gives
\[
 \|F_2\|_2^2=q\|A_2B_\phi^*\|_F^2.
 \tag{25}
\]
Finally, \(F_1\) is independent of the additive coordinate \(k\), whereas
every term of \(F_2\) has a nontrivial additive character in that coordinate;
finite additive Fourier orthogonality gives \(F_1\perp F_2\).  Combining
(24)--(25), using that \(S\) is unitary, proves (20).  More precisely, the
two orthogonal blocks have singular values \(\sqrt q|c_\phi(\chi)|\) and
\(\sqrt q\) times the singular values of \(B_\phi\), respectively.  Their
union is the singular spectrum of \(\mathcal M_\phi\), which proves (20a).
\end{proof}

\begin{lemma}[The finite-field path--cycle graph]
\label{lem:pathcycle}
Let \(E_q\) be the unoriented incidence matrix whose columns are indexed by
\(t\in\mathbb F_q\setminus\{0,-1\}\) and whose \(t\)-th column has ones at
the vertices \(t^{-1}\) and \((t+1)^{-1}\).  Then \(E_q\) is, up to row and
column permutations, the incidence matrix of
\[
        P_{p-1}\ \sqcup\ \left(\frac{q}{p}-1\right)C_p.
        \tag{26}
\]
Consequently,
\[
        \sigma_{\min}(E_q)
        \geq
        2\sin\left(\frac{\pi}{2p}\right).
        \tag{27}
\]
If \(h\geq2\), then equality holds in (27).
\end{lemma}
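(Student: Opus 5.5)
The plan is to identify the graph by a change of variables that turns the reciprocal labelling into additive translation, and then read off the spectrum of \(E_q^*E_q\) component by component, exactly as in Lemma~\ref{lem:path}.

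\emph{Step 1: the graph.} Relabel each vertex \(v\in\mathbb F_q^\times\) by \(u=v^{-1}\). The column indexed by \(t\in\mathbb F_q\setminus\{0,-1\}\) then joins \(u=t\) and \(u=t+1\). As \(t\) ranges over \(\mathbb F_q\setminus\{0,-1\}\), these are exactly the pairs \(\{u,u+1\}\) with both \(u\) and \(u+1\) nonzero. So the graph is the "translation by \(1\)" graph on \(\mathbb F_q\) with the vertex \(0\) and its two incident edges deleted. Its connected components are the cosets \(x+\mathbb F_p\):
\begin{itemize}[label={}]
\item the coset \(\mathbb F_p\) itself loses \(0\) and becomes the path \(1,2,\ldots,p-1\), which is \(P_{p-1}\);
\item each of the remaining \(q/p-1\) cosets is a \(p\)-cycle.
\end{itemize}
As a check, the edge count is \((p-2)+(q/p-1)p=q-2\), matching the number of columns. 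This proves (26).

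\emph{Step 2: the spectrum.} After permuting rows and columns, \(E_q\) is block diagonal with one block per component, so \(E_q^*E_q\) is block diagonal as well. Every column has two ones, so for each component \(E^*E=2I+A_L\), where \(A_L\) is the adjacency matrix of its line graph.
\begin{itemize}[label={}]
\item \emph{Path block.} The line graph of \(P_{p-1}\) is \(P_{p-2}\). As in Lemma~\ref{lem:path}, the least eigenvalue is \(2-2\cos(\pi/(p-1))=4\sin^2(\pi/(2(p-1)))\), which is at least \(4\sin^2(\pi/(2p))\).
\item \emph{Cycle blocks.} The line graph of \(C_p\) is \(C_p\), whose adjacency eigenvalues are \(2\cos(2\pi k/p)\). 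Since \(p\) is odd, the minimum is attained at \(k=(p\pm1)/2\) and equals \(-2\cos(\pi/p)\). The least eigenvalue of \(2I+A_{C_p}\) is therefore \(2-2\cos(\pi/p)=4\sin^2(\pi/(2p))\). This quantity is strictly positive precisely because \(p\) is odd: odd cycles have nonsingular unoriented incidence matrices.
\end{itemize}

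\emph{Step 3: conclusion.} Taking the minimum over blocks gives \(\lambda_{\min}(E_q^*E_q)\geq 4\sin^2(\pi/(2p))\), which is (27). When \(h\geq2\) we have \(q/p-1\geq1\), so at least one cycle block is present and realizes this value exactly; hence equality holds in (27).

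The only delicate step is the bookkeeping in Step 1: checking that the reciprocal relabelling is a bijection on vertices, that the exclusion of \(t\in\{0,-1\}\) corresponds exactly to deleting the two edges at \(0\), and hence that components are cosets of the prime field.
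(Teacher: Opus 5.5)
Your proposal is correct and follows the paper's proof essentially step for step. Both arguments relabel vertices by $v\mapsto v^{-1}$, which turns the columns into the edges $\{t,t+1\}$, and decompose by additive $\mathbb F_p$-cosets to get $P_{p-1}$ and $q/p-1$ copies of $C_p$. Both then compare the least Gram eigenvalues $4\sin^2(\pi/(2(p-1)))$ and $4\sin^2(\pi/(2p))$, with a cycle realizing equality when $h\geq2$.

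Your explicit identity $E^*E=2I+A_L$ and your edge count only spell out details the paper leaves implicit. The paper instead notes that oddness of $p$ prevents two values of $t$ from giving the same unoriented edge. That same fact is what makes each coset a simple $p$-cycle in your argument.
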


\begin{proof}
Relabel the vertices by \(v_t=t^{-1}\).  The columns then become the edges
\(\{t,t+1\}\) on \(\mathbb F_q^\times\).  Translation by \(1\) partitions
\(\mathbb F_q\) into its additive \(\mathbb F_p\)-cosets.  The zero coset has
the vertex \(0\) removed and yields the path \(P_{p-1}\).  Every other coset
yields a \(p\)-cycle.  Since \(p\) is odd, no two values of \(t\) produce the
same unoriented edge, proving (26).

For a path \(P_{p-1}\), the least eigenvalue of \(E^*E\) is
\(4\sin^2(\pi/(2(p-1)))\).  For an odd cycle \(C_p\), the least eigenvalue is
\(4\sin^2(\pi/(2p))\).  The latter is smaller, and (27) follows.
When \(h\geq2\), at least one cycle is present, which also proves equality.
\end{proof}

\begin{theorem}[Uniformly stable affine orbits in prime-power dimensions]
\label{thm:primepower}
Let \(q=p^h\) with \(p\) odd, \(d=q-1\), and define
\[
 \phi_q(1)=\frac{1+\sqrt d}{\sqrt{2d+2\sqrt d}},
 \qquad
 \phi_q(m)=\frac{1}{\sqrt{2d+2\sqrt d}}\quad(m\neq1).
\]
Then
\[
 \norm{\mathcal M_{\phi_q}(A)}_{\ell^2(G_q)}
 \geq
 \frac{\sqrt q}{\sqrt d+1}
 \sin\left(\frac{\pi}{2p}\right)\norm A_F
 \qquad(A\in\C^{d\times d}).
        \tag{28}
\]
In particular, the inverse on the range has norm at most \(2p\).  Hence, for
each fixed odd characteristic \(p\), this produces a uniformly stable
deterministic matrix-recovery orbit in every dimension \(d=p^h-1\).
If \(h\geq2\), then the least singular value is exactly
\[
 \sigma_{\min}(\mathcal M_{\phi_q})
 =
 \frac{\sqrt q}{\sqrt d+1}
 \sin\left(\frac{\pi}{2p}\right).
        \tag{29}
\]
\end{theorem}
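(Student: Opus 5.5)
The plan is to specialize the exact block formula (20a) of Proposition \ref{prop:qblock} to the two-level vector \(\phi_q\). This is the choice \(r=1+\sqrt d\) in the \(\mathbb F_q\)-analogue of Lemma \ref{lem:twolevel}, with \(n\) replaced by \(d\). The two blocks are then estimated separately: the character block is computed exactly, and the \(B\)-block is computed through the path--cycle graph of Lemma \ref{lem:pathcycle}.

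\emph{Character block.} For a nontrivial multiplicative character of \(\mathbb F_q^\times\), the same computation as in Lemma \ref{lem:character} gives \(c_{\phi_q}(\chi)=(\phi_q(1)^2-\phi_q(m)^2)\chi(-1)\), because \(\sum_m\chi(m)=0\). Hence
\[
 \min_\chi|c_{\phi_q}(\chi)|=\frac{d+2\sqrt d}{2d+2\sqrt d}=\frac{\sqrt d+2}{2(\sqrt d+1)}.
\]
The trivial character gives the value \(1\).

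\emph{\(B\)-block, lower bound.} The argument of Lemma \ref{lem:twolevel} transfers verbatim to \(\mathbb F_q\): each column \(t\) has the value \(rb^2\) exactly at the rows \(t^{-1},(t+1)^{-1}\), and the value \(b^2\) elsewhere. Therefore, with \(s=\sqrt d\) and \(b^2=(2d+2\sqrt d)^{-1}\),
\[
 B_{\phi_q}=b^2(\mathbf 1+sE_q),\qquad
 (\mathbf 1+sE_q)^*(\mathbf 1+sE_q)=(d+4s)J+s^2E_q^*E_q.
\]
This uses the fact that every column of \(E_q\) has exactly two ones. Since \(d+4s\geq0\), the \(J\)-term is positive semidefinite. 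Combining this with (27) gives
\[
 \sigma_{\min}(B_{\phi_q})\geq 2b^2\sqrt d\sin(\pi/(2p))=\sin(\pi/(2p))/(\sqrt d+1).
\]
This is strictly smaller than the character value \((\sqrt d+2)/(2(\sqrt d+1))\), because \(\sin(\pi/(2p))<1\). Applying (20) then yields (28). For the inverse estimate, combine \(\sin(\pi/(2p))\geq 1/p\) with \(\sqrt q/(\sqrt d+1)=\sqrt{d+1}/(\sqrt d+1)\geq1/2\). These give a lower bound of at least \(1/(2p)\), so the inverse on the range has norm at most \(2p\).

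\emph{Exactness for \(h\geq2\).} This is the only step needing care: we need a test vector that simultaneously attains the cycle eigenvalue and is annihilated by \(J\). When \(h\geq2\), some \(p\)-cycle component \(C_p\) is present. On its \(p\) edges, \(E^*E=2I+A_{L(C_p)}\), and the line graph \(L(C_p)\) is again \(C_p\). The eigenvectors are therefore the characters \(j\mapsto e^{2\pi i kj/p}\), with eigenvalues \(2+2\cos(2\pi k/p)\). The minimum \(4\sin^2(\pi/(2p))\) is attained at \(k=(p\pm1)/2\). Since \(p\) is odd, this \(k\) is nonzero modulo \(p\), so the eigenvector has zero coordinate sum. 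A real choice also works: take \(z_j=\cos(2\pi kj/p)\), supported on that cycle's edges and extended by zero to all other columns. Then \(Jz=0\), and the Rayleigh quotient of the Gram matrix at \(z\) equals exactly \(s^2\cdot4\sin^2(\pi/(2p))\). Hence \(\sigma_{\min}(B_{\phi_q})\) equals the lower bound. Because this value is below the character minimum, (20a) gives (29).
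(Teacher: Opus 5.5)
Your proposal is correct and follows the paper's proof essentially step for step. You use the two-level identity \(B_{\phi_q}=b^2(\mathbf 1+\sqrt d\,E_q)\) with a positive semidefinite \(J\)-term and Lemma~\ref{lem:pathcycle} to get (28), and you compare the result with the nontrivial character modulus \((d+2\sqrt d)/(2d+2\sqrt d)\). For exactness when \(h\geq2\), you take a zero-sum Fourier eigenvector on one \(C_p\)-component, extended by zero. The only differences are cosmetic: you write the cycle eigenvector out explicitly and read (29) directly off (20a), whereas the paper invokes (25) with a rank-one test matrix \(A_2=uz^*\).
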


\begin{proof}
Put \(b=(2d+2\sqrt d)^{-1/2}\).  Lemma \ref{lem:pathcycle} and the
two-level identity give
\[
        B_{\phi_q}=b^2(\mathbf1+\sqrt d\,E_q).
\]
Since
\[
 (\mathbf1+\sqrt d\,E_q)^*(\mathbf1+\sqrt d\,E_q)
 =
 (d+4\sqrt d)J+dE_q^*E_q,
\]
the coefficient of \(J\) is exactly as displayed: \(\mathbf1\) has \(d\)
rows, so \(\mathbf1^*\mathbf1=dJ\), while every column of \(E_q\) has two
ones and hence \(\mathbf1^*E_q=2J\).
Lemma \ref{lem:pathcycle} implies
\[
 \sigma_{\min}(B_{\phi_q})
 \geq
 2b^2\sqrt d\sin\left(\frac{\pi}{2p}\right)
 =
 \frac{1}{\sqrt d+1}\sin\left(\frac{\pi}{2p}\right).
\]
The nontrivial character coefficients have modulus
\[
        \frac{d+2\sqrt d}{2d+2\sqrt d},
\]
which is larger than the preceding lower bound.  Proposition
\ref{prop:qblock} proves (28).  Finally,
\(\sin(\pi/(2p))\geq1/p\) and
\(\sqrt q/(\sqrt d+1)\geq1/2\), yielding the stated inverse bound.

Assume finally that \(h\geq2\).  Choose a cyclic Fourier eigenvector
on one \(C_p\)-component which realizes the least eigenvalue of its incidence
Gram matrix.  It has coordinate sum zero.  Extending it by zero to all other
edge coordinates annihilates the \(J\)-term, and hence realizes equality in
the preceding lower bound for \(\sigma_{\min}(B_{\phi_q})\).  The nontrivial
character block is strictly larger.  If \(z\) is this right singular vector
and \(u\ne0\) is arbitrary, take \(A_1=0\) and \(A_2=uz^*\) in the
coordinates of Proposition \ref{prop:qblock}.  Identity (25) then gives
\[
 \frac{\|\mathcal M_{\phi_q}(A)\|_2}{\|A\|_F}
 =\sqrt q\,\sigma_{\min}(B_{\phi_q}).
\]
Together with (28), this gives the reverse inequality and proves (29).
\end{proof}

\begin{theorem}[Exact optimization in the prime-power two-level family]
\label{thm:ppoptimal}
Assume \(h\geq2\).  For \(r\geq0\), put \(D_r=d-1+r^2\) and define
\[
 \phi_{q,r}(1)=\frac r{\sqrt{D_r}},
 \qquad
 \phi_{q,r}(m)=\frac1{\sqrt{D_r}}\quad(m\ne1).
\]
Then
\[
 \sigma_{\min}(\mathcal M_{\phi_{q,r}})
 =2\sqrt q\,\frac{|r-1|}{D_r}
 \sin\left(\frac{\pi}{2p}\right).
 \tag{30}
\]
Moreover,
\[
 \max_{r\geq0}\sigma_{\min}(\mathcal M_{\phi_{q,r}})
 =\frac{\sqrt q}{\sqrt d+1}\sin\left(\frac{\pi}{2p}\right)
 \tag{31}
\]
whenever \(d\geq10\), and the unique maximizing parameter is
\(r=1+\sqrt d\).  This uniqueness is for the displayed two-level family
with distinguished coordinate \(1\).  Choosing another distinguished
coordinate merely applies a multiplicative coordinate permutation, and gives
a unitarily equivalent orbit measurement problem.
\end{theorem}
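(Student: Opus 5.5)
The plan is to read off the exact spectrum from Proposition~\ref{prop:qblock}, identity (20a), which gives \(\sigma_{\min}(\mathcal M_{\phi_{q,r}})=\sqrt q\min\{\min_\chi|c_{\phi_{q,r}}(\chi)|,\sigma_{\min}(B_{\phi_{q,r}})\}\). I would compute the two blocks separately and check that the \(B\)-block is always the smaller one.

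\emph{Character block.} The trivial character gives \(c(\mathbf 1)=\norm{\phi_{q,r}}_2^2=1\). For a nontrivial \(\chi\), the argument of Lemma~\ref{lem:character} gives \(c(\chi)=\chi(-1)(r^2-1)/D_r\), so \(|c(\chi)|=|r-1|(r+1)/D_r\).

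\emph{\(B\)-block.} The computation of Lemma~\ref{lem:twolevel} transfers verbatim to \(\mathbb F_q\), using Lemma~\ref{lem:pathcycle} for the incidence structure. With \(s=r-1\) it gives
\[
B_{\phi_{q,r}}=D_r^{-1}(\mathbf 1+sE_q),
\qquad
(\mathbf 1+sE_q)^*(\mathbf 1+sE_q)=(d+4s)J+s^2E_q^*E_q .
\]
Since \(s\ge -1\) and \(d\ge 8\) (because \(q\ge 9\)), the coefficient \(d+4s\) is nonnegative. The \(J\)-term is therefore positive semidefinite, and (27) yields
\[
\sigma_{\min}(B_{\phi_{q,r}})\ge 2|s|\sin\!\left(\frac{\pi}{2p}\right)\Big/D_r .
\]
For the reverse inequality, I would reuse the equality argument from Theorem~\ref{thm:primepower}. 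Because \(h\ge2\), there is a \(C_p\)-component. A Fourier eigenvector on it for the least eigenvalue \(4\sin^2(\pi/(2p))\) has coordinate sum zero, so extending it by zero kills the \(J\)-term and gives equality. When \(s=0\) both sides vanish, since \(\mathbf 1\) has rank one and \(d-2\ge1\) columns.

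\emph{Comparison.} It remains to see that the character block never wins the minimum. The trivial value \(1\) is at least the \(B\)-bound, since \(|r-1|\le D_r\) and \(2\sin(\pi/(2p))\le1\). For nontrivial characters, \(r+1\ge1\ge2\sin(\pi/(2p))\), so again the \(B\)-block is at most the character value; equality is possible when \(p=3\) and \(r=0\), but this does not change the minimum. This proves (30).

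\emph{Maximization.} For (31) I would maximize \(f(r)=|r-1|/D_r\), exactly as in (17) with \(n\) replaced by \(d\).
\begin{itemize}
\item On \([0,1]\), \(f\) is strictly decreasing, so its maximum there is \(f(0)=1/(d-1)\).
\item On \([1,\infty)\), the numerator of \(f'\) is \(D_r-2r(r-1)=-(r^2-2r-(d-1))\). This has the single root \(r=1+\sqrt d\), with \(f\) increasing before it and decreasing after it. The value there is \(\sqrt d/(2d+2\sqrt d)=1/(2(\sqrt d+1))\).
\end{itemize}
With \(x=\sqrt d\), the strict inequality \(2(\sqrt d+1)<d-1\) is equivalent to \(x^2-2x-3>0\), i.e.\ \(x>3\), which holds for \(d\ge10\). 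So the interior critical point strictly beats \(r=0\), giving a unique maximizer \(r=1+\sqrt d\). Multiplying by \(2\sqrt q\sin(\pi/(2p))\) gives (31).

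The remark about other distinguished coordinates follows because a multiplicative relabeling \(m\mapsto am\) is a unitary on \(\C^{\mathbb F_q^\times}\) that normalizes \(\pi_q\). It therefore conjugates the orbit measurement operator unitarily and preserves its singular values.

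The main obstacle is the exactness in (30): the sign condition \(d+4s\ge0\), the zero-sum cycle eigenvector (this is exactly where \(h\ge2\) is used), and the comparison with the character block, which becomes tight when \(p=3\).
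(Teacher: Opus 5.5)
Your proposal is correct and follows the paper's proof essentially step for step. The shared route is the exact block identity (20a), the Gram decomposition \((d+4s)J+s^2E_q^*E_q\) with \(d+4s>0\) together with a zero-sum \(C_p\) eigenvector for equality (this is where \(h\geq2\) enters), the comparison \(r+1\geq1\geq2\sin(\pi/(2p))\) against the character block, and the one-variable maximization of \(|r-1|/D_r\). Your extra checks fill in details the paper leaves implicit without changing the argument: that the trivial character value \(1\) dominates, the case \(s=0\), and the unitary relabeling \(m\mapsto am\), which normalizes \(\pi_q\).
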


\begin{proof}
Write \(s=r-1\).  The nontrivial character coefficients have modulus
\[
 \frac{|r^2-1|}{D_r}=\frac{|s|(r+1)}{D_r}
 \geq 2\frac{|s|}{D_r}\sin\left(\frac{\pi}{2p}\right).
 \tag{32}
\]
Here we used \(r+1\geq1\) and \(2\sin(\pi/(2p))\leq1\).  On the other
hand,
\[
 B_{\phi_{q,r}}=D_r^{-1}(\mathbf1+sE_q),
 \qquad
 (\mathbf1+sE_q)^*(\mathbf1+sE_q)
 =(d+4s)J+s^2E_q^*E_q.
 \tag{33}
\]
Since \(d+4s\geq d-4>0\), Lemma \ref{lem:pathcycle} gives the lower
bound
\[
 \sigma_{\min}(B_{\phi_{q,r}})
 \geq2\frac{|s|}{D_r}\sin\left(\frac{\pi}{2p}\right).
 \tag{34}
\]
The cycle eigenvector used in the proof of Theorem \ref{thm:primepower} has
coordinate sum zero, so it annihilates the \(J\)-term in (33).  Thus equality
holds in (34).  The trivial character coefficient is one, while (32) controls
every nontrivial character coefficient, so the character block is no smaller.
The exact block identities from Proposition \ref{prop:qblock}, applied to a
rank-one matrix supported on the second block, prove (30).

It remains only to maximize \(|r-1|/D_r\).  It decreases on \([0,1]\),
with maximum \(1/(d-1)\) at \(r=0\); on \([1,\infty)\) it has its unique
maximum \(1/(2(\sqrt d+1))\) at \(r=1+\sqrt d\).  The latter is strictly
larger when \(d\geq10\), proving (31).
\end{proof}

\begin{remark}[What remains open over a prime field]
\label{rem:primefieldlimitation}
The exact formula in Theorem~\ref{thm:ppoptimal} uses the cycle component in
Lemma~\ref{lem:pathcycle}, and is therefore asserted only for \(h\geq2\).
When \(h=1\), the graph is the path \(P_{p-1}\); the extremal path vector
does not in general annihilate the \(J\)-term in \((33)\).  The earlier
prime-field theorems give matching stability order in that case, but this
paper does not claim a sharp two-level constant or an exact optimizer for the
prime-field path problem.
\end{remark}

\begin{theorem}[Trace-flat affine orbits on every \(3\)-power tower]
\label{thm:traceflat}
Let \(q=3^h\) with \(h\geq2\), let
\(\tau=\operatorname{Tr}_{\mathbb F_q/\mathbb F_3}\), and put
\[
 \eta=2-\frac{3\sqrt2}{2},
 \qquad
 r=\frac{-1+\sqrt{9-6\sqrt2}}2,
 \qquad
 a=\frac q3,
 \qquad
 N_q=2a-1+2a\eta=q(2-\sqrt2)-1.
\]
The real unit vector
\[
 \phi_q(m)=\frac1{\sqrt{N_q}}
 \begin{cases}
  1,&\tau(m)=0,\\
  r,&\tau(m)=1,\\
  -1-r,&\tau(m)=2
 \end{cases}
 \qquad(m\in\mathbb F_q^\times)
\]
has matching character and second-block minima:
\[
 \min_{\chi\ne\mathbf1}|c_{\phi_q}(\chi)|
 =\sigma_{\min}(B_{\phi_q})
 =\frac{\sqrt{a(1+4\eta)}}{N_q}.
\]
Consequently,
\[
 \sigma_{\min}(\mathcal M_{\phi_q})
 =\frac{q(\sqrt2-1)}{q(2-\sqrt2)-1}
 >\frac1{\sqrt2}.
\]
In particular, these exact stability constants decrease to \(1/\sqrt2\) as
\(h\to\infty\).
\end{theorem}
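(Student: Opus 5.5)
The plan is to apply Proposition~\ref{prop:qblock}, which identifies \(\sigma_{\min}(\mathcal M_{\phi_q})\) with \(\sqrt q\) times the minimum of the character block and \(\sigma_{\min}(B_{\phi_q})\). I would compute both blocks exactly using additive Fourier analysis on the trace fibers, then finish with arithmetic.

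\emph{Normalization and trace classes.} The fiber \(\tau^{-1}(0)\cap\mathbb F_q^\times\) has \(a-1\) elements and the other two fibers have \(a\) elements each. A direct check gives \(r^2+(1+r)^2=2+2\eta\), and this is exactly what produces \(N_q\) as the squared norm. The defining property of the window is zero trace mean, since \(1+r+(-1-r)=0\).

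\emph{Character block.} Since \(\tau(-m)=-\tau(m)\), the weights \(|\phi(-m)|^2\) are \(1\), \((1+r)^2\), \(r^2\) on the classes \(\tau(m)=0,1,2\). I would write each fiber indicator as \(\tfrac13\sum_{u\in\mathbb F_3}\omega^{u(\tau(m)-j)}\) with \(\omega=e^{2\pi i/3}\). For \(\chi\ne\mathbf1\), the \(u=0\) terms vanish, so
\[
c_{\phi_q}(\chi)=\frac{g(\chi)}{3N_q}\Bigl(X+\chi(-1)\overline{X}\Bigr),\qquad X=\sum_{j}w_j\omega^{-j},
\]
where \(g(\chi)\) is the Gauss sum attached to \(\psi=\omega^{\tau}\), with \(|g(\chi)|=\sqrt q\). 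Here I use that the \(u=2\) Gauss sum equals \(\chi(-1)g(\chi)\). So \(|c_{\phi_q}(\chi)|\) takes only two values, according to \(\chi(-1)=\pm1\), namely \(\tfrac{\sqrt q}{3N_q}\cdot2|\mathrm{Re}\,X|\) and \(\tfrac{\sqrt q}{3N_q}\cdot2|\mathrm{Im}\,X|\). Inserting \(r\), I expect the smaller one to equal \(\sqrt{a(1+4\eta)}/N_q\). This is pure arithmetic with \(\sqrt2\).

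\emph{Second block (main step).} I would compute the Gram matrix
\[
(B^*B)(t,t')=\sum_{m\ne0}\phi(mt)\phi(m(t+1))\phi(mt')\phi(m(t'+1)).
\]
First extend \(\phi\) to \(\mathbb F_q\) by its class-\(0\) value and expand \(\phi=\sum_{u\in\mathbb F_3}\hat f(u)\,\omega^{u\tau(\cdot)}\); zero trace mean gives \(\hat f(0)=0\). Summing over \(m\) then forces the \(\mathbb F_3\)-linear relation \(u_1t+u_2(t+1)+u_3t'+u_4(t'+1)=0\), plus an explicit \(m=0\) correction which is a multiple of \(J\). Thus \(B^*B\) becomes \(\kappa I\), plus \(cJ\), plus matrices supported on pairs \((t,t')\) satisfying \(\mathbb F_3\)-linear relations. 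These relations are \(t'\in\{t,-1-t\}\) and those where \(\{t,t+1,t',t'+1\}\) spans a small \(\mathbb F_3\)-space, for instance \(t,t'\) lying in a common \(\mathbb F_3\)-affine line through \(0\) and \(-1\).

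This is the finite-geometric block decomposition. Columns split into orbits of size at most \(2\) or \(3\) plus the global \(J\) and structured parts. I would block-diagonalize, computing the eigenvalues on each small block and on the all-ones/structured part. The goal is to show the least eigenvalue is \(a(1+4\eta)/N_q^2\), attained by a vector orthogonal to \(\mathbf1\), e.g.\ an antisymmetric vector on a \(\{t,-1-t\}\) pair or a sum-zero vector on an \(\mathbb F_3\)-line. I expect this bookkeeping of which relations occur, with which signs and Gauss-sum-free coefficients, to be the main obstacle. The choice of \(r\) should be exactly what equalizes the two smallest candidate eigenvalues with the character minimum.

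\emph{Conclusion.} The trivial character gives \(c=1\), which is larger. Hence, by (20a),
\[
\sigma_{\min}=\sqrt q\,\sqrt{a(1+4\eta)}/N_q=q\sqrt{(9-6\sqrt2)/9}/N_q .
\]
Since \(\sqrt{(9-6\sqrt2)/9}=\sqrt{1-\tfrac{2\sqrt2}{3}}\) and \(1-\tfrac{2\sqrt2}{3}=\tfrac{(\sqrt2-1)^2}{3}\cdot 3/3\) reduces after simplification to \((\sqrt2-1)^2/1\cdot\tfrac13\cdot 3\), this gives \(q(\sqrt2-1)/N_q\). Finally, the stated inequality follows because \(q(\sqrt2-1)/(q(2-\sqrt2)-1)\) strictly exceeds, and decreases to, \((\sqrt2-1)/(2-\sqrt2)=1/\sqrt2\).
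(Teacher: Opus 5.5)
The gap is in the second block, and it is the substance of the theorem. You leave it as anticipated bookkeeping and never establish $\sigma_{\min}(B_{\phi_q})=\sqrt{a(1+4\eta)}/N_q$, not even as a lower bound. The structure you anticipate is also not what occurs. Because $\widehat f(0)=0$, every $u_i$ lies in $\{\pm1\}$, and the relation reads $(u_1+u_2)t+(u_3+u_4)t'+(u_2+u_4)=0$. The mixed cases ($u_1+u_2\ne0=u_3+u_4$ or vice versa) force $t$ or $t'$ into $\{0,-1\}$, which is excluded. So there are no $\mathbb F_3$-line relations and no orbits of size three. Only three families survive:
\begin{itemize}
\item[(i)] $u_1=-u_2,\ u_3=-u_4=u_2$, which holds for every pair $(t,t')$;
\item[(ii)] $u_1=u_2=-u_3=-u_4$, which forces $t'=t$;
\item[(iii)] $u_1=u_2=u_3=u_4$, which forces $t'=-1-t$.
\end{itemize}
With $\widehat f(1)=\gamma$ and $\widehat f(2)=\overline\gamma$, the unnormalized Gram matrix is therefore exactly
\[
 2q|\gamma|^4I+\bigl(2q|\gamma|^4-1\bigr)J+2q\operatorname{Re}(\gamma^4)\,P,
\]
where $P$ is the permutation matrix of $t\mapsto-1-t$.

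The decisive fact missing from your plan is that the chosen $r$ gives $\operatorname{Re}(\gamma^4)=(2\eta^2-8\eta-1)/18=0$. Hence the coupling $P$ disappears entirely. Equivalently, the antisymmetric and symmetric branches $2q|\gamma|^4\mp2q\operatorname{Re}(\gamma^4)$ coincide; this is the same identity that equalizes your two character parities. You also need the $J$-coefficient $2q(1+\eta)^2/9-1$ to be positive for $q\ge9$, so that the all-ones direction does not give a smaller eigenvalue. The least eigenvalue is then $2q|\gamma|^4=2q(1+\eta)^2/9=a(1+4\eta)$, again by $2\eta^2-8\eta-1=0$. This is exactly the paper's computation, carried out there in the coordinate $\lambda=2t+1$, where $P$ becomes $\lambda\mapsto-\lambda$.

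Your character-block argument, by contrast, is complete and takes a genuinely different route. The paper computes the multiplicative autocorrelation $R(t)$, using independence of $\tau(m)$ and $\tau(tm)$ for $t\notin\mathbb F_3$. Your Gauss-sum expansion yields the two parity values directly, and they in fact coincide: $2\operatorname{Re}X=1-2\eta=3(\sqrt2-1)$ and $2|\operatorname{Im}X|=\sqrt3\,|1+2r|=\sqrt{3(1+4\eta)}=3(\sqrt2-1)$.

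Two arithmetic slips should be corrected.
\begin{itemize}
\item[(1)] $r^2+(1+r)^2=1+2\eta$, not $2+2\eta$; the latter would give squared norm $3a-1+2a\eta\ne N_q$.
\item[(2)] $\sqrt q\sqrt{a(1+4\eta)}=q\sqrt{(9-6\sqrt2)/3}=q\sqrt{3-2\sqrt2}=q(\sqrt2-1)$. Your $q\sqrt{(9-6\sqrt2)/9}$ is too small by a factor $\sqrt3$, and the ``simplification'' that follows it does not hold. The clean identity to use is $1+4\eta=3(\sqrt2-1)^2$.
\end{itemize}
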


\begin{proof}
Write \(x=1\), \(y=r\), and \(z=-1-r\), so that
\(x+y+z=0\) and \(r(r+1)=\eta\).  The nonzero elements of each nonzero trace
class have cardinality \(a\), while the trace-zero class has cardinality
\(a-1\).  This gives
\[
 \|\phi_q\|^2=\frac{(a-1)x^2+ay^2+az^2}{N_q}=1.
\]

For \(t\notin\mathbb F_3\), the two \(\mathbb F_3\)-linear functionals
\(m\mapsto\tau(m)\) and \(m\mapsto\tau(tm)\) are independent.  Indeed, a
linear dependence would say \(\tau((u+vt)m)=0\) for every \(m\) and some
\((u,v)\ne(0,0)\) in \(\mathbb F_3^2\).  Nondegeneracy of the trace pairing
would give \(u+vt=0\), contrary to \(t\notin\mathbb F_3\).  Thus each pair
of trace values occurs \(q/9=a/3\) times before removal of \(m=0\).

Let \(R(t)=N_q^2\sum_m\phi_q(m)^2\phi_q(tm)^2\).  The preceding count,
together with the immediate cases \(t=1\) and \(t=-1\), gives
\[
 R(1)=2a-1+4a\eta+2a\eta^2,
 \qquad
 R(-1)=a-1+2a\eta^2,
\]
and, for \(t\notin\mathbb F_3\),
\[
 R(t)=\frac{4a}{3}-1+\frac{4a}{3}\eta^2+\frac{8a}{3}\eta.
\]
Since \(2\eta^2-8\eta-1=0\), the latter two values are equal; call their common
value \(R_0\).  Moreover \(R(1)-R_0=a(1+4\eta)\).  Multiplicative Fourier
inversion now yields
\[
 |c_{\phi_q}(\chi)|^2=\frac{R(1)-R_0}{N_q^2}
 =\frac{a(1+4\eta)}{N_q^2}
\]
for every nontrivial multiplicative character.  Replacing \(m\) by \(-m\)
accounts for the convention in \(c_{\phi_q}\) and leaves the calculation
unchanged.

It remains to calculate the second block.  Let
\(\zeta=e^{2\pi i/3}\) and \(\psi(u)=\zeta^{\tau(u)}\).  Since
\(x+y+z=0\), there is a complex number
\[
 \gamma=\frac12-\frac{i(1+2r)}{2\sqrt3}
\]
such that the unnormalized three-level function is
\(F(u)=\gamma\psi(u)+\overline\gamma\psi(-u)\).  Put
\(c=\gamma^2\) and \(e=|\gamma|^2\).  A direct calculation gives
\[
 \operatorname{Re}(\gamma^4)
 =\frac{2\eta^2-8\eta-1}{18}=0,
 \qquad
 e=\frac{1+\eta}{3}.
\]
Reindex the columns of the unnormalized second block by
\(\lambda=2t+1\).  As \(t\) ranges over
\(\mathbb F_q\setminus\{0,-1\}\), \(\lambda\) ranges over
\(\mathbb F_q\setminus\{1,-1\}\), and the corresponding column is
\[
 g_\lambda(m)=c\psi(\lambda m)+e\psi(-m)+e\psi(m)
                 +\overline c\psi(-\lambda m)
 \qquad(m\in\mathbb F_q^\times).
\]
For \(u\in\mathbb F_q\), additive-character orthogonality says
\[
 \sum_{m\in\mathbb F_q^\times}\psi(um)
 =\begin{cases}q-1,&u=0,\\-1,&u\ne0.\end{cases}
\]
The sum of the four coefficients in \(g_\lambda\) is
\(c+\overline c+2e=(\gamma+\overline\gamma)^2=1\).  Thus the diagonal
entries of the unnormalized Gram matrix are \(2q(e^2+|c|^2)-1\) for
\(\lambda\ne0\), whereas the \(\lambda=0\) diagonal entry is
\(q((c+\overline c)^2+2e^2)-1\).  The latter differs from the former by
\(2q\operatorname{Re}(c^2)\).  For two distinct nonzero indices, the
off-diagonal entry is \(2qe^2-1\) when \(\lambda\ne-\mu\), and
\(2q(e^2+\operatorname{Re}(c^2))-1\) when \(\lambda=-\mu\).  The
off-diagonal entry between \(0\) and a nonzero index is \(2qe^2-1\).
The vanishing of \(\operatorname{Re}(c^2)=\operatorname{Re}(\gamma^4)\)
therefore makes all diagonal entries equal and all off-diagonal entries equal.
Consequently,
\[
 B_{\phi_q}^*B_{\phi_q}
 =\frac{2q|\gamma|^4}{N_q^2}I_{q-2}
  +\frac{2qe^2-1}{N_q^2}J_{q-2}.
\]
The coefficient of \(J_{q-2}\) is positive for \(q\geq9\), so the least
eigenvalue is \(2q|\gamma|^4/N_q^2\).  Finally, the equation defining
\(\eta\) gives
\[
 2q|\gamma|^4=\frac{2q(1+\eta)^2}{9}
 =\frac q3(1+4\eta)=a(1+4\eta).
\]
This proves the asserted equality of the two block minima.  Their common
value is less than one, while the trivial character coefficient is one, so
the exact block criterion proves the displayed formula for
\(\sigma_{\min}(\mathcal M_{\phi_q})\).  Finally,
\(1+4\eta=3(\sqrt2-1)^2\), and hence
\(\sqrt{(1+4\eta)/3}=\sqrt2-1\), while
\(N_q=q(2-\sqrt2)-1\).  The displayed value is larger than
\(1/\sqrt2\), and its derivative as a function of real \(q>0\) is negative,
so it decreases to \(1/\sqrt2\).
\end{proof}

\begin{lemma}[Two-valued character spectrum for real trace windows]
\label{lem:tracecharacters}
Let \(q=3^h\) with \(h\geq2\), put \(a=q/3\), and let \(F\not\equiv0\) take the
three real values \(F_0,F_1,F_2\) on the three trace classes, respectively.
Let \(\phi=F/\sqrt N\) on \(\mathbb F_q^\times\), where
\[
 N=(a-1)F_0^2+aF_1^2+aF_2^2.
\]
Define
\[
 \begin{aligned}
 A&=a(F_0^4+F_1^4+F_2^4)-F_0^4,\\
 B&=(a-1)F_0^4+2aF_1^2F_2^2,\\
 C&=\frac a3(F_0^2+F_1^2+F_2^2)^2-F_0^4.
 \end{aligned}
 \tag{35}
\]
Then every nontrivial multiplicative character \(\chi\) satisfies
\[
 |c_\phi(\chi)|^2
 =\frac{A-C+\chi(-1)(B-C)}{N^2}.
 \tag{36}
\]
Thus the nontrivial character spectrum has at most two values, distinguished
only by the parity \(\chi(-1)\).
\end{lemma}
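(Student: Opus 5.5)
We follow the autocorrelation route already used in the proof of Theorem~\ref{thm:traceflat}, now keeping all three values \(F_0,F_1,F_2\) general. Put \(w(m)=\phi(m)^2\) on \(\mathbb F_q^\times\). Since \(c_\phi(\chi)=\sum_m w(-m)\chi(m)=\chi(-1)\widehat w(\chi)\), we have \(|c_\phi(\chi)|^2=|\widehat w(\chi)|^2\). Expanding the square gives
\[
 |c_\phi(\chi)|^2=\sum_{t\in\mathbb F_q^\times}\chi(t)\,R(t),\qquad R(t)=\sum_{m}w(m)w(tm).
\]
So the task reduces to computing \(N^2R(t)=\sum_m F(m)^2F(tm)^2\) for every \(t\). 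We then use \(\sum_t\chi(t)=0\) for nontrivial \(\chi\).

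We split into three cases.

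\emph{Case \(t=1\).} Here \(N^2R(1)=(a-1)F_0^4+aF_1^4+aF_2^4=A\).

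\emph{Case \(t=-1\).} Multiplication by \(-1\) fixes the trace-zero class and swaps the classes with traces \(1\) and \(2\). Hence \(N^2R(-1)=(a-1)F_0^4+2aF_1^2F_2^2=B\).

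\emph{Case \(t\notin\mathbb F_3\).} Here we reuse the independence argument from the proof of Theorem~\ref{thm:traceflat}. The functionals \(\tau(m)\) and \(\tau(tm)\) are \(\mathbb F_3\)-independent, so each pair \((i,j)\in\mathbb F_3^2\) of trace values occurs for exactly \(q/9=a/3\) elements \(m\in\mathbb F_q\). We then remove \(m=0\), which lies in the pair \((0,0)\). This gives
\[
 N^2R(t)=\frac a3\sum_{i,j}F_i^2F_j^2-F_0^4=\frac a3\Bigl(\sum_iF_i^2\Bigr)^2-F_0^4=C,
\]
which is independent of \(t\). This is the only place the hypothesis \(h\ge2\) is used: it guarantees that such \(t\) exist and that \(q/9\) is an integer.

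Summing over \(t\) with \(\chi(1)=1\), we get
\[
 N^2|c_\phi(\chi)|^2=A+\chi(-1)B+C\sum_{t\notin\{\pm1\}}\chi(t).
\]
For nontrivial \(\chi\), the last sum equals \(-1-\chi(-1)\), which gives (36) directly. Since \(\chi(-1)=\pm1\), there are at most two values.

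The main obstacle is the bookkeeping in the case \(t\notin\mathbb F_3\): checking the count \(a/3\) per trace pair, and that deleting \(m=0\) affects only the \((0,0)\) pair. This is exactly where the \(-F_0^4\) correction comes from, and it matches the class sizes \(a-1,a,a\) appearing in \(N\).
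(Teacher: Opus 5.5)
Your proposal is correct and follows essentially the same route as the paper's proof. Both compute the autocorrelation \(R(t)\) in the three cases \(t=1\), \(t=-1\), and \(t\notin\mathbb F_3\) (using the trace-pair count \(a/3\) and the \(m=0\) correction to get \(C\)), then apply multiplicative Fourier inversion together with \(\sum_{t\notin\{\pm1\}}\chi(t)=-1-\chi(-1)\) to obtain (36).
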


\begin{proof}
For \(t\in\mathbb F_q^\times\), put
\(R(t)=\sum_{m\in\mathbb F_q^\times}F(m)^2F(tm)^2\).  Directly from the
sizes of the trace classes,
\[
 R(1)=A,\qquad R(-1)=B.
\]
If \(t\notin\mathbb F_3\), the two trace functionals
\(m\mapsto\operatorname{Tr}(m)\) and
\(m\mapsto\operatorname{Tr}(tm)\) are independent.  Every ordered pair of
trace values therefore occurs \(q/9=a/3\) times before removing \(m=0\),
which gives \(R(t)=C\).  Multiplicative Fourier inversion gives
\[
 N^2|c_\phi(\chi)|^2=\sum_{t\in\mathbb F_q^\times}R(t)\chi(t).
\]
For nontrivial \(\chi\), the sum of \(\chi(t)\) over
\(t\notin\{1,-1\}\) is \(-1-\chi(-1)\).  Substitution proves (36).
\end{proof}

\begin{lemma}[Visible spectral branches of a zero-sum trace window]
\label{lem:tracebranches}
Let \(q=3^h\) with \(h\geq2\), let
\(\psi(u)=\exp(2\pi i\operatorname{Tr}_{\mathbb F_q/\mathbb F_3}(u)/3)\),
and let
\[
 F(u)=\gamma\psi(u)+\overline\gamma\psi(-u),
 \qquad \gamma=\rho e^{i\theta}\ne0.
\]
Normalize the restriction of \(F\) to \(\mathbb F_q^\times\) by its squared
norm \(N=2\rho^2(q-2\cos^2\theta)\).  In the second block associated with
\(F/\sqrt N\), indexed by
\(L=\mathbb F_q\setminus\{1,-1\}\), the following are eigenvalues of the
Gram matrix:
\[
 \begin{aligned}
 \frac{4q\rho^4\sin^2(2\theta)}{N^2}
 &\quad\text{on a subspace of dimension }\frac{q-3}{2},\\
 \frac{4q\rho^4\cos^2(2\theta)}{N^2}
 &\quad\text{on a subspace of dimension }\frac{q-5}{2}.
 \end{aligned}
 \tag{37}
\]
The first invariant subspace consists of vectors that are antisymmetric under
\(\lambda\mapsto-\lambda\) and vanish at \(0\).  The second invariant subspace consists of
vectors that are symmetric under this involution, vanish at \(0\), and have
coordinate sum zero.
\end{lemma}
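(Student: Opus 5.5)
We compute the Gram matrix of the unnormalized second block directly, following the proof of Theorem~\ref{thm:traceflat} but keeping \(\gamma\) general. With \(F(u)=\gamma\psi(u)+\overline\gamma\psi(-u)\), the product \(F(mt)F(m(t+1))\) expands into four additive characters. Reindexing by \(\lambda=2t+1\), so that \(t=(\lambda-1)/2\) and \(t+1=(\lambda+1)/2\), and substituting \(m\mapsto 2m\), the column becomes
\[
 g_\lambda(m)=c\,\psi(\lambda m)+e\,\psi(m)+e\,\psi(-m)+\overline c\,\psi(-\lambda m),
\]
with \(c=\gamma^2\) and \(e=|\gamma|^2=\rho^2\). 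The four frequencies are \(\lambda,1,-1,-\lambda\). These are distinct unless \(\lambda=0\), where the first and last merge, and \(\lambda\in L\) excludes coincidences with \(\pm1\). The normalization \(N\) follows from the same orthogonality: \(\sum_{m\neq0}|F|^2=q\cdot2\rho^2-|F(0)|^2=2q\rho^2-4\rho^2\cos^2\theta\).

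Next we evaluate the Gram entries \(G(\lambda,\mu)=\sum_{m\in\mathbb F_q^\times}g_\lambda\overline{g_\mu}\) using \(\sum_{m\ne0}\psi(um)=q[u=0]-1\). Each entry equals \(q\) times the sum of products of coefficients with matching frequencies, minus \(g_\lambda\)-sum times \(\overline{g_\mu}\)-sum. Since each column's coefficient sum is \((\gamma+\overline\gamma)^2=4\rho^2\cos^2\theta=:s\) (real), this subtracted term is the constant \(s^2\). For distinct nonzero \(\lambda,\mu\) with \(\mu\ne-\lambda\), only the \(\pm1\) frequencies match, giving \(2qe^2-s^2\). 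For \(\mu=-\lambda\) one gets an extra \(q(c^2+\overline c^2)=2q\operatorname{Re}(c^2)\). The diagonal for \(\lambda\ne0\) is \(q(2|c|^2+2e^2)-s^2\). The entries involving \(0\) do not matter once we restrict to vectors vanishing at \(0\).

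Hence on vectors supported on \(L\setminus\{0\}\), the Gram matrix acts as
\[
 q\bigl(2|c|^2 I+2\operatorname{Re}(c^2)P\bigr)+(2qe^2-s^2)J,
\]
where \(P\) is the involution \(\lambda\mapsto-\lambda\). Here \(2|c|^2=2\rho^4\) and \(\operatorname{Re}(c^2)=\rho^4\cos4\theta\).

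\begin{itemize}
\item \emph{Antisymmetric vectors.} These have \(P=-1\) and coordinate sum zero, so the \(J\)-term vanishes. The eigenvalue is \(2q\rho^4(1-\cos4\theta)=4q\rho^4\sin^2 2\theta\).
\item \emph{Symmetric zero-sum vectors.} These have \(P=+1\), and the \(J\)-term again vanishes. The eigenvalue is \(2q\rho^4(1+\cos4\theta)=4q\rho^4\cos^2 2\theta\).
\end{itemize}

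Dividing by \(N^2\) gives (37).

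The remaining point is to check that these subspaces are genuinely invariant for the full Gram matrix on \(L\), not merely for its compression. For a vector \(v\) vanishing at \(0\), the \(0\)-th entry of \(Gv\) is \(\sum_\mu G(0,\mu)v_\mu\). A short computation gives \(G(0,\mu)=2qe^2-s^2\), constant in \(\mu\ne0\): the only matches are the \(\pm1\) frequencies, since \(0\) does not coincide with \(\pm\mu\). So the \(0\)-th entry of \(Gv\) vanishes whenever \(\sum v=0\), and both classes are zero-sum. Invariance then follows.

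For the dimensions: \(L\setminus\{0\}\) has \(q-3\) elements, which fall into \((q-3)/2\) pairs under \(\pm\). Antisymmetric vectors therefore have dimension \((q-3)/2\). Symmetric vectors have dimension \((q-3)/2\), and the zero-sum condition cuts this to \((q-5)/2\).

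The main obstacle is bookkeeping the coincident-frequency cases (\(\lambda=0\), \(\mu=-\lambda\)) and the substitution \(m\mapsto2m\); the spectral conclusion itself is immediate.
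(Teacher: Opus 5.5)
Your proposal is correct and follows essentially the same route as the paper. Both expand the columns into the four additive characters after reindexing by \(\lambda=2t+1\) and tabulate the Gram entries: your \(2q\rho^4-s^2\), the extra \(2q\rho^4\cos 4\theta\) at \(\mu=-\lambda\), and the constant entries to index \(0\) are exactly the paper's \(O\) and \(O'\) and its remark that entries joining \(0\) to nonzero indices equal \(O\). Both then read off \(D-O'\) and \(D+O'-2O\) on the antisymmetric subspace and on the symmetric zero-sum subspace.

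One harmless slip: no substitution \(m\mapsto 2m\) is needed, because \(\psi(mt)\psi(m(t+1))=\psi(\lambda m)\) directly. In characteristic \(3\) that substitution is \(m\mapsto -m\); it would only swap \(c\) and \(\overline c\), a row permutation that leaves the Gram matrix unchanged.
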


\begin{proof}
Put \(C=\gamma^2\) and \(E=|\gamma|^2=\rho^2\).  After the change of
variables \(\lambda=2t+1\), the unnormalized column indexed by
\(\lambda\in L\) is
\[
 g_\lambda(m)=C\psi(\lambda m)+E\psi(-m)+E\psi(m)
                    +\overline C\psi(-\lambda m).
\]
Additive-character orthogonality on \(\mathbb F_q^\times\), together with
\(C+\overline C+2E=F(0)^2\), gives the following four types of Gram entries:
Put \(Q=F(0)^4=16\rho^4\cos^4\theta\).  Then
\[
 \begin{array}{c|c}
 \text{entry}&\text{unnormalized value}\\ \hline
 \lambda=\mu\ne0& D=4q\rho^4-Q\\
 \lambda=\mu=0&D_0=D+2q\rho^4\cos(4\theta)\\
 \lambda,\mu\ne0,\ \lambda\ne\pm\mu&O=2q\rho^4-Q\\
 \lambda=-\mu\ne0&O'=O+2q\rho^4\cos(4\theta).
 \end{array}
\]
The entries joining \(0\) to a nonzero index are also \(O\).  On an
antisymmetric vector, the \(0\)-coordinate is zero and the eigenvalue is
\(D-O'\).  On a symmetric vector with zero \(0\)-coordinate and zero
coordinate sum, it is \(D+O'-2O\).  The identities
\[
 D-O'=4q\rho^4\sin^2(2\theta),
 \qquad
 D+O'-2O=4q\rho^4\cos^2(2\theta)
\]
give (37) after division by \(N^2\).  Counting the nonzero elements of
\(L\) in opposite pairs gives the stated invariant-subspace dimensions; eigenvalues may coincide.
\end{proof}

\begin{theorem}[Complete spectral reduction in the zero-sum trace class]
\label{thm:tracespectrum}
Under the hypotheses and notation of Lemmas~\ref{lem:tracecharacters} and
\ref{lem:tracebranches}, put \(\kappa=\cos(4\theta)\) and \(Q=16\rho^4\cos^4\theta\), and let \(\mu_-\)
denote the smaller eigenvalue of the explicit real symmetric matrix
\[
 K_{q,\rho,\theta}=\frac1{N^2}
 \begin{pmatrix}
  2q\rho^4(2+\kappa)-Q&\sqrt{q-3}\,(2q\rho^4-Q)\\
  \sqrt{q-3}\,(2q\rho^4-Q)&
  2q\rho^4(q-2+\kappa)-(q-3)Q
 \end{pmatrix}.
 \tag{38}
\]
Then the least singular value of the affine matrix-recovery map is given
exactly by
\[
 \sigma_{\min}(\mathcal M_\phi)^2
 =q\min\left\{
 \frac{A-C-|B-C|}{N^2},\
 \frac{4q\rho^4}{N^2}\min\{\sin^2(2\theta),\cos^2(2\theta)\},\
 \mu_-
 \right\}.
 \tag{39}
\]
In particular, the conditioning of every real zero-sum trace window is
decided by two character levels, two visible spectral branches, and one
explicit \(2\times2\) matrix.
\end{theorem}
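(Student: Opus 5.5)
The plan is to combine the exact block formula (20a) of Proposition~\ref{prop:qblock} with a complete orthogonal decomposition of the second-block Gram matrix. By (20a),
\(\sigma_{\min}(\mathcal M_\phi)^2=q\min\{\min_\chi|c_\phi(\chi)|^2,\lambda_{\min}(B_\phi^*B_\phi)\}\).
For the character part, Lemma~\ref{lem:tracecharacters} gives the nontrivial values \((A-C\pm(B-C))/N^2\), whose minimum is \((A-C-|B-C|)/N^2\). Both parities occur for nontrivial characters when \(q\geq9\), because \(\chi(-1)=-1\) for odd characters and there are nontrivial even characters since \((q-1)/2>1\).

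The trivial character has \(c_\phi(\mathbf 1)=\norm{\phi}^2=1\). We must check that it never realizes the minimum. I would show that each of the other quantities is at most \(1\): the Gram trace satisfies \(\operatorname{tr}B_\phi^*B_\phi=\norm{B_\phi}_F^2\le\norm{\phi}^4\cdot\) (a count). A cleaner route is to note that the sum of \(|c_\phi(\chi)|^2\) over all \(\chi\) equals \(d\sum_m\phi(m)^4\le d\). If this bound does not settle it, I would argue directly that \((A-C-|B-C|)/N^2\le1\) from the explicit formulas in (35).

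For the second block, we work with the Gram matrix on \(L=\mathbb F_q\setminus\{1,-1\}\) using the entry table in the proof of Lemma~\ref{lem:tracebranches}. That table says \(G\) has constant diagonal \(D\) off \(0\), diagonal \(D_0\) at \(0\), off-diagonal \(O'\) on opposite pairs, and \(O\) elsewhere. Hence \(G\) commutes with the involution \(\lambda\mapsto-\lambda\) and with every permutation of the \((q-3)/2\) opposite pairs. Decompose \(\mathbb C^L\) orthogonally into three pieces:
\begin{enumerate}
\item the antisymmetric vectors, which have dimension \((q-3)/2\);
\item the symmetric vectors vanishing at \(0\) with coordinate sum zero, which have dimension \((q-5)/2\);
\item the two-dimensional space spanned by \(e_0\) and \(u=(q-3)^{-1/2}\mathbf 1_{L\setminus\{0\}}\).
\end{enumerate}
The dimensions add to \(q-2=|L|\). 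Lemma~\ref{lem:tracebranches} already gives the eigenvalues on the first two pieces, namely \(4q\rho^4\sin^2(2\theta)/N^2\) and \(4q\rho^4\cos^2(2\theta)/N^2\). The first piece is nonzero since \(q\ge9\).

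The third piece is invariant because it is the orthogonal complement of the other two. The remaining work is to compute the compression of \(N^2G\) to it:
\begin{itemize}
\item \(\langle e_0,Ge_0\rangle=D_0=4q\rho^4-Q+2q\rho^4\kappa=2q\rho^4(2+\kappa)-Q\);
\item \(\langle e_0,Gu\rangle=(q-3)^{-1/2}(q-3)O=\sqrt{q-3}\,(2q\rho^4-Q)\);
\item \(\langle u,Gu\rangle=D+O'+(q-5)O\), which should simplify to \(2q\rho^4(q-2+\kappa)-(q-3)Q\).
\end{itemize}
For the last entry, use \(D=4q\rho^4-Q\), \(O'=2q\rho^4(1+\kappa)-Q\) and \((q-5)O=(q-5)(2q\rho^4-Q)\); the \(Q\) coefficient is \(-(q-3)\) and the \(\rho^4\) coefficient is \(2q(q-2+\kappa)\). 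This is exactly \(K_{q,\rho,\theta}\) in (38). The least Gram eigenvalue is therefore the minimum of the two branch values and \(\mu_-\), and inserting this into (20a) gives (39).

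I expect the main obstacle to be bookkeeping rather than conceptual. One point is confirming that the entries joining \(0\) to nonzero indices are uniformly \(O\), so that the symmetry argument is valid. The other is checking the trivial-character exclusion: if \((A-C-|B-C|)/N^2\le1\) does not hold outright, I would instead keep \(1\) inside the minimum and show it is dominated, for example by the Gram trace bound.
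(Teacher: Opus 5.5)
Your proposal is correct and matches the paper's proof essentially step for step: it uses the same orthogonal splitting of \(\mathbb C^L\) (antisymmetric piece; symmetric zero-sum piece vanishing at \(0\); span of \(e_0\) and the normalized all-ones vector on the nonzero indices), the same compression with entries \(D_0\), \(\sqrt{q-3}\,O\), \(D+O'+(q-5)O\), and the same appeal to (20a) with both character parities present for \(q\geq9\). Your hesitation about the trivial character is unnecessary, since the paper just uses the triangle inequality \(|c_\phi(\chi)|\leq\sum_m|\phi(-m)|^2=1\), and your Plancherel route also settles it at once because the nontrivial values \(|c_\phi(\chi)|^2\) average \((d\sum_m\phi(m)^4-1)/(d-1)\leq1\).
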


\begin{proof}
Lemma~\ref{lem:tracecharacters} gives the first term in (39).  The two
eigenvalues in Lemma~\ref{lem:tracebranches} account for the antisymmetric
subspace and for the symmetric, zero-sum subspace.  Their orthogonal
complement is two-dimensional: it is spanned by the coordinate vector at
\(\lambda=0\) and the normalized all-ones vector on the
\((q-3)/2\) opposite pairs of nonzero indices.

With the notation \(D,D_0,O,O'\) used in the proof of
Lemma~\ref{lem:tracebranches}, the Gram matrix on this complement is
\[
 \frac1{N^2}
 \begin{pmatrix}
 D_0&\sqrt{q-3}\,O\\
 \sqrt{q-3}\,O&D+O'+(q-5)O
 \end{pmatrix}.
\]
Substituting the four displayed values there gives exactly (38).  Hence its
smaller eigenvalue is \(\mu_-\).  The three invariant pieces exhaust the
\(q-2\) columns of the second block.  The exact block criterion in
Proposition~\ref{prop:qblock}, together with the fact that
\(c_\phi(\mathbf1)=1\), now yields (39).  Both parities occur among
nontrivial multiplicative characters for \(q\geq9\).  Their moduli are at
most one by the triangle inequality, so omitting the trivial character
does not change the minimum.
\end{proof}

\begin{theorem}[Spectral reduction for arbitrary real trace windows]
\label{thm:generaltrace}
Let \(q=3^h\), \(h\geq2\), and let
\[
 F(u)=a+\gamma\psi(u)+\overline\gamma\psi(-u),
 \qquad a\in\mathbb R,\quad \gamma=x+iy,
\]
where \(\psi(u)=\exp(2\pi i\operatorname{Tr}(u)/3)\).
Assume \(F\not\equiv0\), and normalize its restriction to
\(\mathbb F_q^\times\) as \(\phi=F/\sqrt N\).  Define
\[
 \begin{gathered}
 e=x^2+y^2,\quad b=2(x^2-y^2),\quad c=2ax,\quad n=q-3,\\
 N=q(a^2+2e)-(a+2x)^2,\qquad Q=(a+2x)^4,\\
 V=(a^2+b)^2+2(e+c)^2,\quad
 W=a^4+2e^2,\quad Z=a^2(a^2+b)+2e(e+c),\\
 L_1=b+2c,\quad L_2=4y(x-a),\quad
 L_3=b-c,\quad L_4=2y(2x+a).
 \end{gathered}
\]
Let \(\mu_-\) be the smaller eigenvalue of
\[
 K=\frac1{N^2}
 \begin{pmatrix}
 qV-Q&\sqrt n(qZ-Q)\\
 \sqrt n(qZ-Q)&q(L_1^2+nW)-nQ
 \end{pmatrix}.
\]
Then the exact least singular value is
\[
 \sigma_{\min}(\mathcal M_\phi)^2
 =q\min\left\{\frac q{N^2}
       \min_{1\leq j\leq4}L_j^2,\ \mu_-\right\}.
\]
Thus the unrestricted real three-value trace class admits a reduction to
four scalar expressions and a real symmetric matrix of order two.
\end{theorem}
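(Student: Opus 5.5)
The plan is to apply the exact block identity (20a) of Proposition~\ref{prop:qblock}, which gives \(\sigma_{\min}(\mathcal M_\phi)^2=q\min\{\min_\chi|c_\phi(\chi)|^2,\lambda_{\min}(B_\phi^*B_\phi)\}\). The work then splits into the character block and the second block.

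\emph{Character block.} Lemma~\ref{lem:tracecharacters} already gives the nontrivial character values \((A-C\pm(B-C))/N^2\) in terms of \(F_0,F_1,F_2\). I will rewrite \(F_j=a+2\operatorname{Re}(\gamma\zeta^j)\) in the variables \(a,x,y\). My expectation is that these two levels coincide with two of the \(qL_j^2/N^2\), say those of \(L_1\) and \(L_2\), up to the normalization by \(N\). The identity \(A-C\pm(B-C)=qL_j^2\) is a polynomial identity in \(a,x,y\) with coefficients affine in \(q\), so it can be checked by expansion. The trivial character equals \(1\) and is dominated, as in the proof of Theorem~\ref{thm:tracespectrum}. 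If the character levels turn out not to match any \(L_j\), I will instead show they dominate the second-block values.

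\emph{Second block.} I expand the column \(B_\phi(\cdot,t)=\phi(mt)\phi(m(t+1))\) by multiplying out \(F(tm)F((t+1)m)\). Reindexing by \(\lambda=2t+1\) and using \(1/2=-1\) in characteristic \(3\), the frequencies that occur are \(0\), \(\pm1\), \(\pm\lambda\), \(\pm(\lambda+1)\) and \(\pm(\lambda-1)\). The coefficients are:
\begin{itemize}
\item \(a^2\) at frequency \(0\);
\item \(e\) at \(\pm1\);
\item \(\gamma^2,\overline\gamma^{\,2}\) at \(\pm\lambda\);
\item \(a\gamma,a\overline\gamma\) at the frequencies coming from a single factor.
\end{itemize}
Each column therefore depends only on the coset \(\lambda+\mathbb F_3\) and on the sign. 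Gram entries follow from additive orthogonality \(\sum_{m\ne0}\psi(um)=q[u=0]-1\). The term \(-1\), summed over all coefficients, produces the uniform \(-Q/N^2\) contribution, since the sum of the coefficients is \(F(0)^2=(a+2x)^2\). The \(q\)-part is nonzero only when frequency sets overlap.

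\emph{Symmetry decomposition.} The Gram matrix is then invariant under the dihedral group \(S_3\) generated by \(\lambda\mapsto\lambda+1\) and \(\lambda\mapsto-\lambda\). On \(L=\mathbb F_q\setminus\{\pm1\}\), the generic orbits are the unions \((\lambda+\mathbb F_3)\cup(-\lambda+\mathbb F_3)\) of size six, and \(0\) is a fixed point. The Gram matrix is \((q/N^2)\) times a block-constant matrix on each orbit, plus rank-one and global constant terms that are seen only by the trivial isotypic part. I will decompose each orbit into \(S_3\)-isotypic pieces. Non-trivial isotypic vectors kill the constant term and all cross-orbit entries, so they yield eigenvalues \(qL_j^2/N^2\). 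The \(L_j\) come from the sign representation and from the two-dimensional representation, which appears with multiplicity two, giving a \(2\times2\) block per orbit. I expect that block to diagonalize, for example as \(L_3^2\) and \(L_4^2\) via a Cauchy--Riemann-type factorization. The trivial isotypic part, spanned by \(e_0\) and the normalized all-ones vector on \(L\setminus\{0\}\) together with orbit-sum vectors orthogonal to it, produces the \(2\times2\) matrix \(K\) with entries \(qV-Q\), \(qZ-Q\) and \(q(L_1^2+nW)-nQ\). Its remaining directions are sum-zero across orbits and contribute \(qL_1^2/N^2\) again.

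\emph{Main obstacle.} The hardest step is the explicit bookkeeping of frequency coincidences within an orbit, including the degenerate cases where \(\pm\lambda\pm1\) equals \(\pm1\) or \(0\). The real risk is the two-dimensional isotypic block: its eigenvalues must factor exactly as squares of the linear forms \(L_j\). I will first verify the whole computation symbolically in the zero-sum case \(a=0\), where it must reproduce Lemma~\ref{lem:tracebranches} and Theorem~\ref{thm:tracespectrum}. I will then track the \(a\)-dependent terms as perturbations of that computation.
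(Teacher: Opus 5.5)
Your overall route is the paper's. You use the exact block identity (20a) and Lemma~\ref{lem:tracecharacters} for the character block; the two numerators $A-C\pm(B-C)$ there are indeed $qL_1^2$ and $qL_2^2$ once $F_0=a+2x$, $F_{1,2}=a-x\mp\sqrt3\,y$. You also use the Fourier-coefficient form $B_\phi^*B_\phi=N^{-2}(qH^*H-QJ)$ of the second block, a coset-pair-by-coset-pair decomposition, and the $2\times2$ matrix $K$ on $e_0$ and the normalized constant vector on $\mathbb F_q\setminus\mathbb F_3$.

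The step that fails as written is your symmetry claim. The Gram matrix is \emph{not} invariant under $\lambda\mapsto\lambda+1$, which does not even act on $L$, since it sends $0$ to $1$. Already for $a=0$, Lemma~\ref{lem:tracebranches} gives the entry $O'=O+2q\rho^4\cos(4\theta)$ on an opposite pair $(\lambda,-\lambda)$. Translation carries this pair to $(\lambda+1,-\lambda+1)$, which is not opposite and has entry $O$. In general, on a coset pair the Gram matrix commutes with translation only when $L_3^2=L_4^2$. So any Schur-lemma reasoning based on your $S_3$-action is unjustified, in particular the description of the standard block through multiplicity spaces. Likewise, a column does not depend only on the coset and the sign; only its support outside $\{0,\pm1\}$ does.

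The repair is what the paper does. On $\mathbb C^{\mathbb F_q}$ let $Te_\lambda=e_{\lambda+1}+e_{\lambda-1}$ and $Re_\lambda=e_{-\lambda}$; these are commuting self-adjoint operators. The coefficient vector of column $\lambda$ is $H_0e_\lambda+w$, where $H_0=\gamma^2I+\overline\gamma^{\,2}R+a\overline\gamma T+a\gamma TR$ and $w=a^2e_0+e(e_1+e_{-1})$. Since $H_0$ is a normal element of the algebra generated by $T$ and $R$, the following subspaces are invariant: the joint eigenspaces $(T,R)=(2,1),(2,-1),(-1,1),(-1,-1)$ on each coset pair, of dimensions $1,1,2,2$. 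On them $H_0$ acts by $L_1,iL_2,L_3,iL_4$.

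These are exactly your trivial and sign pieces and the two $R$-eigenspaces inside your standard piece, so the subspaces you intended do survive. Splitting by $R$ yields $L_3^2$ and $L_4^2$ at once, with no $2\times2$ factorization needed.

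Finally, treat $q=9$ explicitly. There is then a single coset pair, so the sum-zero-across-orbits copy of $L_1^2$ is absent from the second block. Hence $L_1$ enters the minimum only through the even nontrivial characters. You must therefore note that both parities $\chi(-1)=\pm1$ occur among nontrivial characters when $q\ge9$.
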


\begin{proof}
Work first in coefficient space \(\mathbb C^{\mathbb F_q}\), with coordinate
vectors \(e_\lambda\).  Let \(R e_\lambda=e_{-\lambda}\) and
\(T e_\lambda=e_{\lambda+1}+e_{\lambda-1}\).  These commuting self-adjoint
operators preserve the union of any additive \(\mathbb F_3\)-coset and its
negative.  Set
\[
 H_0=\gamma^2I+\overline\gamma^2R
          +a\overline\gamma T+a\gamma TR,\qquad
 w=a^2e_0+e(e_1+e_{-1}).
\]
For \(L=\mathbb F_q\setminus\{1,-1\}\), let \(H\) be the matrix with
columns \(H_0e_\lambda+w\), \(\lambda\in L\).
Expanding \(F(mt)F(m(t+1))\) and using \(\lambda=2t+1\) shows that these
are precisely the additive Fourier coefficient vectors of the unnormalized
columns of \(B_\phi\).  Every column sums to \(F(0)^2=(a+2x)^2\).
Additive-character orthogonality, including removal of \(m=0\), therefore
gives
\[
 B_\phi^*B_\phi=N^{-2}(qH^*H-QJ).
\]

There are \(k=(q-3)/6\) pairs of nonzero cosets in
\(\mathbb F_q/\mathbb F_3\).  On each pair, the joint eigenvalues of
\((T,R)\) are \((2,1),(2,-1),(-1,1),(-1,-1)\), with respective
dimensions \(1,1,2,2\).  The corresponding eigenvalues of \(H_0\) are
\[
 L_1,\qquad iL_2,\qquad L_3,\qquad iL_4.
\]
All but the \((2,1)\) vectors have coordinate sum zero.  Consequently the
normalized Gram eigenvalues on these subspaces are \(qL_j^2/N^2\).
The \((2,1)\) vectors with total coordinate sum zero supply a further
subspace of dimension \(k-1\); the other dimensions are \(k,2k,2k\).
The remaining two-dimensional space is spanned by \(e_0\) and the normalized
constant vector on \(\mathbb F_q\setminus\mathbb F_3\).
Now
\[
 He_0=(a^2+b)e_0+(e+c)(e_1+e_{-1}),
\]
whose squared norm is \(V\), while
\(\|w\|^2=W\) and \(\langle He_0,w\rangle=Z\).
On the normalized constant vector outside \(\mathbb F_3\), \(H\) acts as
\(L_1\) times that vector plus \(\sqrt n\,w\).  Its Gram matrix is
therefore exactly \(K\).

It remains to account for the character block, including the case \(q=9\),
when \(k-1=0\).  The three trace values are
\[
 F_0=a+2x,\quad F_1=a-x-\sqrt3y,\quad F_2=a-x+\sqrt3y.
\]
Lemma~\ref{lem:tracecharacters} gives, for nontrivial \(\chi\),
\[
 |c_\phi(\chi)|^2=
 \begin{cases}
 qL_1^2/N^2,&\chi(-1)=1,\\
 qL_2^2/N^2,&\chi(-1)=-1.
 \end{cases}
\]
Indeed the two numerators are respectively
\(q(2F_0^2-F_1^2-F_2^2)^2/9\) and
\(q(F_1^2-F_2^2)^2/3\).
Both character parities occur for \(q\geq9\), so \(L_1\) remains in the
overall minimum even when its second-block subspace is absent.
Their moduli are at most one, so the trivial character does not change
the minimum.  The exact block identities complete the proof.
\end{proof}

\begin{corollary}[Algebraic recovery criterion for real trace windows]
\label{cor:generaltraceinjective}
With the notation of Theorem~\ref{thm:generaltrace}, put
\[
 \Delta=n(eb-a^2c)+L_1(e+c-a^2-b).
\]
Then
\[
 \det K=\frac{2q\Delta^2}{N^4}.
\]
The orbit generated by \(\phi\) does matrix recovery if and only if
\[
 L_1L_2L_3L_4\Delta\ne0.
\]
\end{corollary}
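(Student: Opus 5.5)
The corollary has two parts: a determinant identity for \(K\), and a recovery criterion. I would prove them in that order, since the criterion follows quickly from Theorem~\ref{thm:generaltrace} once the determinant is known.

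\emph{Determinant.} The entries of \(K\) are explicit polynomials in \(a,x,y\) and \(q\), with \(n=q-3\). The plan is to expand
\[
 N^4\det K=(qV-Q)\bigl(q(L_1^2+nW)-nQ\bigr)-n(qZ-Q)^2
\]
and sort it by powers of \(q\) and \(Q\).

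The \(Q^2\) terms cancel, since \(nQ^2-nQ^2=0\). The terms linear in \(Q\) collect as \(-qQ\bigl(L_1^2+nW+nV-2nZ\bigr)\). By Cauchy--Schwarz structure, \(W+V-2Z=\|He_0-w\|^2\). From the formulas for \(He_0\) and \(w\), this equals
\[
 \|(b)e_0+c(e_1+e_{-1})\|^2=b^2+2c^2.
\]
The \(q^2\) terms give \(q^2\bigl(V(L_1^2+nW)-nZ^2\bigr)\). Here \(VW-Z^2\) is a Gram determinant of \(He_0\) and \(w\) in coordinates \((e_0,\,e_1+e_{-1})\). It equals \(2\bigl(a^2(e+c)-e(a^2+b)\bigr)^2=2(a^2c-eb)^2\).

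What remains is to check that the leftover mixed terms assemble into the cross term \(2nL_1(eb-a^2c)(e+c-a^2-b)\), plus \(L_1^2(e+c-a^2-b)^2\). To do this I will use the identity \(Q=(a+2x)^4=(a^2+b+2(e+c))^2\). It holds because \(a^2+b+2e+2c=a^2+4x^2+4ax=(a+2x)^2\), and it lets me write \(Q=\langle He_0+w',\cdot\rangle\)-type expressions in the same basis.

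I expect this polynomial identity, i.e.\ showing the result is exactly \(2q\Delta^2\), to be the main obstacle. The bookkeeping of the \(Q\)-linear terms is where it can go wrong. I would first try to recognize \(K\) as \(q\,G-Q\,uu^{T}\) for a \(2\)-vector \(u=(1,\sqrt n)\) and a Gram matrix \(G\) of two vectors in \(\mathbb R^3\). Then the matrix determinant lemma reduces everything to \(\det G\) and \(u^{T}\operatorname{adj}(G)u\), which I would verify symbolically, sanity-checking at a couple of numerical points such as \(a=0\) (the zero-sum case of Theorem~\ref{thm:tracespectrum}).

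\emph{Recovery criterion.} By Theorem~\ref{thm:generaltrace}, \(\sigma_{\min}(\mathcal M_\phi)>0\) if and only if every \(L_j\ne0\) and \(\mu_->0\). Since \(K\) is a Gram matrix, it is positive semidefinite, so \(\mu_->0\) if and only if \(\det K\neq0\), i.e.\ \(\Delta\ne0\). We have \(N>0\) because \(F\not\equiv0\) on \(\mathbb F_q^\times\); otherwise \(\phi\) is undefined, and the hypothesis of the theorem covers this. Finally, matrix recovery is equivalent to injectivity of \(\mathcal M_\phi\), i.e.\ to \(\sigma_{\min}>0\). Combining these gives the criterion \(L_1L_2L_3L_4\Delta\ne0\).
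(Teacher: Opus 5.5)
Your proposal is correct, and your treatment of the recovery criterion is the same as the paper's: positive semidefiniteness of $K$ turns $\mu_->0$ into $\det K\neq0$, hence into $\Delta\neq0$. The difference lies in the determinant.

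You expand $N^4\det K$ directly, which amounts to the matrix determinant lemma for $N^2K=q\mathcal G-Quu^{T}$, and you leave a polynomial identity still to be checked. The paper instead observes that $qI-\mathbf 1\mathbf 1^{*}$ annihilates the all-ones vector; this is the Gram form of the Fourier coefficient vectors once the sample at $0$ is removed. So in the second vector $L_1\mathbf 1_{\mathrm{out}}/\sqrt n+\sqrt n\,w$ one may replace $\mathbf 1_{\mathrm{out}}$ by $-(e_0+e_1+e_{-1})$. Both vectors then lie in $\operatorname{span}\{e_0,e_1+e_{-1}\}$, and one gets a congruence $K=N^{-2}C^{*}GC$ of $2\times2$ matrices. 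Here the restricted form has $\det G=2qn$ and $\det C=\Delta/\sqrt n$, so $\det K=2q\Delta^2/N^4$ is immediate and the perfect square is explained. Your rank-one term $-Quu^{T}$ is exactly the contribution of $-\mathbf 1\mathbf 1^{*}$, so spotting that kernel vector is what would collapse your computation.

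Your route does close, but $n=q-3$ is not independent of $q$. Your $q^2$-term yields $2q^2nX^2$ with $X=eb-a^2c$, whereas $2q\Delta^2$ contains $2qn^2X^2$. The excess $6qnX^2$ must be absorbed by the leftover terms, which is why your planned matching of ``mixed terms'' cannot work as literally stated.

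Concretely, divide by $q$, substitute $q=n+3$, and put $u=a^2+b$ and $v=e+c$. Then $X=bv-cu$, $L_1=b+2c$, $e+c-a^2-b=v-u$, $V=u^2+2v^2$, and $Q=(u+2v)^2$. Comparing powers of $n$:
\begin{itemize}
\item the $n^2$ terms agree;
\item the $n^0$ terms reduce to $3V-Q=2(u-v)^2$;
\item the $n^1$ terms reduce to $VL_1^2+6X^2-Q(b^2+2c^2)=4L_1X(v-u)$, which holds by direct expansion.
\end{itemize}

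So the brute-force route is a valid verification. The paper's congruence buys a two-line proof and a structural reason why $\det K$ is $2q$ times a square.
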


\begin{proof}
For Fourier coefficient vectors the Gram form, after the sample at zero
is removed, is \(qI-\mathbf1\mathbf1^*\).  It annihilates the constant
vector.  Subtracting the constant value outside \(\mathbb F_3\) from
the second of the two coefficient vectors used to compute \(K\), their
coordinates in \((e_0,e_1+e_{-1})\) become the columns of
\[
 C=\begin{pmatrix}
 a^2+b&(na^2-L_1)/\sqrt n\\
 e+c&(ne-L_1)/\sqrt n
 \end{pmatrix}.
\]
The restricted Gram form is
\[
 G=\begin{pmatrix}q-1&-2\\-2&2q-4\end{pmatrix},
 \qquad \det G=2qn.
\]
Thus \(K=N^{-2}C^*GC\) and
\(\det C=\Delta/\sqrt n\), proving the formula.
Since \(K\) is positive semidefinite, it is positive definite exactly
when \(\Delta\ne0\).  Theorem~\ref{thm:generaltrace} now gives the
stated equivalence.
\end{proof}

\begin{lemma}[A quadratic majorant for the principal sign region]
\label{lem:tracecertificate}
Put \(\ell=\sqrt2-1\), \(\mathcal D_q=q(1-\ell)-1=N_q\), and
\(t_q=\ell/\mathcal D_q\).
Define
\[
 \omega_1=\frac{1+2\ell-(5-2\ell)t_q}{6},\qquad
 \omega_2=\frac{1+t_q}{2},\qquad
 \omega_3=\frac{(1-\ell)(1+t_q)}{3}.
\]
For \(q\geq9\) these weights are strictly positive and sum to one.
For each \(\varepsilon\in\{\pm1\}\), the following identity holds:
\[
\begin{split}
 t_qN-\omega_1L_1-\varepsilon\omega_2L_2-\omega_3L_3
 ={}&t_q(q-1)\bigl(a-(1-\ell)x+\varepsilon\sqrt2\,y\bigr)^2\\
 &+(1+t_q)\bigl(y-\varepsilon\ell x\bigr)^2.
\end{split}
\]
Consequently the left-hand side is nonnegative, and it vanishes
precisely when \((a,x,y)=s(0,1,\varepsilon\ell)\), \(s\in\mathbb R\).
\end{lemma}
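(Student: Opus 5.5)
The plan is a direct verification in three steps. First the elementary claims about the weights, then the polynomial identity by matching coefficients, then the equality analysis. Throughout I will use two facts about \(\ell=\sqrt2-1\) and \(t_q\):
\[
 \ell^2=1-2\ell,
 \qquad
 t_q\,q(1-\ell)=t_q+\ell .
\]
The second is just the definition \(t_q=\ell/\mathcal D_q\) with \(\mathcal D_q=q(1-\ell)-1\). It is the only place \(q\) enters, and it allows every occurrence of \(qt_q\) to be eliminated in favour of \(t_q\) and \(\ell\).

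\emph{Weights.} Adding the three numerators over the common denominator \(6\), the constant terms give \((1+2\ell)+3+2(1-\ell)=6\). The \(t_q\)-terms give \(-(5-2\ell)+3+2(1-\ell)=0\). Hence the sum of the weights is one. Positivity of \(\omega_2\) and \(\omega_3\) is immediate from \(t_q>0\) and \(1-\ell>0\). For \(\omega_1\) I will use that \(t_q\) is decreasing in \(q\), so \(t_q\leq \ell/(9(2-\sqrt2)-1)<0.1\) for \(q\geq9\). This gives \(1+2\ell-(5-2\ell)t_q>1.8-0.5>0\).

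\emph{Identity.} Both sides are real quadratic forms in \((a,x,y)\). On the left, use \(N=q(a^2+2x^2+2y^2)-(a+2x)^2\) together with
\[
 L_1=2(x^2-y^2)+4ax,\qquad L_2=4y(x-a),\qquad L_3=2(x^2-y^2)-2ax .
\]
I will compare the six coefficients of \(a^2,x^2,y^2,ax,ay,xy\), treating the two signs \(\varepsilon=\pm1\) at once. Note that \(\varepsilon\) appears only in the \(ay\) and \(xy\) monomials, and linearly on both sides.

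\begin{itemize}
\item The \(a^2\) coefficient is \(t_q(q-1)\) on both sides.
\item The \(ay\) coefficient is \(4\varepsilon\omega_2\) on the left and \(2\sqrt2\,\varepsilon t_q(q-1)\) on the right. Writing \(2\sqrt2=2(1+\ell)\), this reduces to the defining relation for \(t_q\).
\item The remaining four coefficients (\(x^2,y^2,ax,xy\)) reduce, after substituting \(\ell^2=1-2\ell\) and \(qt_q(1-\ell)=t_q+\ell\), to linear identities in \(t_q\) with coefficients in \(\mathbb Q(\sqrt2)\).
\end{itemize}

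These checks are routine. The step most likely to go wrong is bookkeeping in the \(ax\) and \(xy\) coefficients, because there the \(-(a+2x)^2\) part of \(N\), the \(L_1\) and \(L_3\) cross terms, and the cross terms of both squares all contribute. I would organize this as a table of contributions per monomial.

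\emph{Nonnegativity and equality.} Once the identity holds, nonnegativity is clear, since \(t_q(q-1)>0\) and \(1+t_q>0\). Vanishing forces both squares to be zero. The second square gives \(y=\varepsilon\ell x\). Substituting into the first gives
\[
 a=\bigl(1-\ell-\sqrt2\,\ell\bigr)x .
\]
Since \(1-\ell-\sqrt2\ell=(2-\sqrt2)-(2-\sqrt2)=0\), this forces \(a=0\). Therefore \((a,x,y)=s(0,1,\varepsilon\ell)\). Conversely, all such points make both squares vanish.
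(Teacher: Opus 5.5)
Your proposal is correct and matches the paper's proof. Both expand the two sides as quadratic forms in \((a,x,y)\) and reduce using \(\ell^2=1-2\ell\) together with the defining relation for \(t_q\); the paper phrases the latter via \(\sqrt2\,\ell=1-\ell\). Both then bound \(t_q\leq t_9<1/10\) to get \(\omega_1>0\), and read off the equality line from the two squares. The four coefficient checks you defer do go through: for example, both \(x^2\)-coefficients equal \(\ell(1+t_q)\) and both \(ax\)-coefficients equal \(-2\ell(1+t_q)\).
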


\begin{proof}
The relations \(\ell^2=1-2\ell\) and \(\sqrt2\,\ell=1-\ell\)
give the identity by expansion.  The weights sum to one by the same
relations.  Since \(0<t_q\leq t_9<1/10\), they are strictly positive.
Both square coefficients are positive.  Vanishing of the second square
gives \(y=\varepsilon\ell x\), and the first then gives \(a=0\).
\end{proof}

\begin{theorem}[Global sharpness in the principal sign region]
\label{thm:tracechamber}
Under the hypotheses of Theorem~\ref{thm:generaltrace}, suppose that
\(L_1,L_3\geq0\).  Then
\[
 \sigma_{\min}(\mathcal M_\phi)
 \leq \frac{q(\sqrt2-1)}{q(2-\sqrt2)-1}.
\]
Equality holds exactly for the zero-sum optimal trace windows of
Theorem~\ref{thm:traceflat}, up to global sign and interchange of trace
classes \(1\) and \(2\).
\end{theorem}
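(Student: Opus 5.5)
Theorem~\ref{thm:generaltrace} gives the upper bound
\[
\sigma_{\min}(\mathcal M_\phi)^2\le \frac{q^2}{N^2}\min_j L_j^2 .
\]
Hence it suffices to show that
\[
\min\{|L_1|,|L_2|,|L_3|\}\le t_q N,
\qquad t_q=\frac{\ell}{\mathcal D_q}.
\]
Indeed, $\sqrt q\cdot \sqrt q\, t_q=q\ell/(q(2-\sqrt2)-1)$, since $1-\ell=2-\sqrt2$, and this is the target value. The plan is a weighted-average argument using Lemma~\ref{lem:tracecertificate}. Choose $\varepsilon=\operatorname{sign}(L_2)$, with either sign if $L_2=0$, so that $\varepsilon L_2=|L_2|$. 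In the principal region we have $L_1=|L_1|$ and $L_3=|L_3|$. The certificate identity then gives
\[
t_qN\ \ge\ \omega_1|L_1|+\omega_2|L_2|+\omega_3|L_3|\ \ge\ \min_{j\le3}|L_j|,
\]
because the weights are positive and sum to one. Multiplying by $q/N$ (note $N>0$ whenever $F\not\equiv0$ on $\mathbb F_q^\times$) and taking square roots yields the inequality of the theorem. The term $L_4$ and the $2\times2$ block $\mu_-$ are not needed for the upper bound, since they only lower the minimum.

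For the equality case, suppose $\sigma_{\min}(\mathcal M_\phi)$ attains the bound. Then every inequality in the chain is tight. First, the certificate left-hand side vanishes, so Lemma~\ref{lem:tracecertificate} forces
\[
(a,x,y)=s(0,1,\varepsilon\ell), \qquad s\neq 0 .
\]
Second, the averaging step is tight, so $|L_1|=|L_2|=|L_3|$ all equal the minimum; this is automatic once the first condition holds, and I would check it directly by evaluating $L_j$ at that point. With $a=0$ the window has zero trace mean. Writing $\gamma=s(1+i\varepsilon\ell)$, the argument satisfies $\tan\theta=\pm\ell=\pm\tan(\pi/8)$, so $\theta=\pm\pi/8$. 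Computing $F_0,F_1,F_2$ from the formulas in the proof of Theorem~\ref{thm:generaltrace} should recover, after normalization, the values $1,r,-1-r$ of Theorem~\ref{thm:traceflat}. The global sign comes from $s$, and $\varepsilon\mapsto-\varepsilon$ is complex conjugation $y\mapsto-y$, which swaps $F_1$ and $F_2$, i.e.\ interchanges trace classes $1$ and $2$.

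Conversely, Theorem~\ref{thm:traceflat} shows these windows achieve the value. The sign flip and the swap preserve $\sigma_{\min}$: the sign flip trivially, and the swap because conjugation only relabels the two branches $L_2$ and $L_4$ up to sign. Hence equality holds exactly on the stated set.

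The main obstacle is the identification in the equality case. Concretely, one must verify that $(0,1,\pm\ell)$ scaled gives precisely the ratio $r=(-1+\sqrt{9-6\sqrt2})/2$ between $F_1$ and $F_0$, and that no additional degenerate equality arises when $L_2=0$. The latter is ruled out since at the certified point $L_2=4y(x-a)=4s^2\varepsilon\ell\neq0$. I would do the former by computing $r(r+1)=\eta$ from $F_1F_2/F_0^2$ using $\ell^2=1-2\ell$.
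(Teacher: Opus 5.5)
Your proposal is correct and follows the paper's own proof: you choose $\varepsilon$ with $\varepsilon L_2=|L_2|$, use the positive weights of Lemma~\ref{lem:tracecertificate} to bound $\min_j|L_j|$ by $t_qN$, read off the equality lines $s(0,1,\varepsilon\ell)$ from the sum-of-squares identity, and get attainment from Theorem~\ref{thm:traceflat}. Two details the paper leaves implicit are supplied in your version and check out: the identification of those lines with the values $1,r,-1-r$ (via $a=0$ and $r(r+1)=\eta$), and the exclusion of the degenerate case $L_2=0$.
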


\begin{proof}
Choose \(\varepsilon\) so that \(\varepsilon L_2=|L_2|\).
The positive weights of Lemma~\ref{lem:tracecertificate} sum to one, so
\[
 \min_{1\leq j\leq4}|L_j|
 \leq \omega_1L_1+\omega_2|L_2|+\omega_3L_3
 \leq t_qN.
\]
Theorem~\ref{thm:generaltrace} gives the bound.
Equality forces \((a,x,y)\) onto one of the two equality lines of
Lemma~\ref{lem:tracecertificate}.  The exact formula in
Theorem~\ref{thm:traceflat} confirms attainment on these lines.
\end{proof}

\begin{lemma}[Certificates for the remaining sign regions]
\label{lem:traceothercertificates}
Let \(z=(a,x,y)^{\mathsf T}\), and write \(L_j=z^{\mathsf T}M_jz\), where
\[
\begin{aligned}
 M_1&=\begin{pmatrix}0&2&0\\2&2&0\\0&0&-2\end{pmatrix},
 &M_2&=\begin{pmatrix}0&0&-2\\0&0&2\\-2&2&0\end{pmatrix},\\
 M_3&=\begin{pmatrix}0&-1&0\\-1&2&0\\0&0&-2\end{pmatrix},
 &M_4&=\begin{pmatrix}0&0&1\\0&0&2\\1&2&0\end{pmatrix}.
\end{aligned}
\]
Put \(E=\operatorname{diag}(1,2,2)\), \(v=(1,2,0)^{\mathsf T}\), and
\(\mathsf N_q=qE-vv^{\mathsf T}\), so that \(N=z^{\mathsf T}\mathsf N_qz\).
For each row in the following table, the weights are nonnegative, sum
to one, and satisfy
\[
 Q_q=t_q\mathsf N_q-\sum_{j=1}^4 w_j s_j M_j\ \succ0
 \qquad(9\leq q<\infty).
\]
Here \(s=(s_1,s_2,s_3,s_4)\) and \(t_q=(\sqrt2-1)/(q(2-\sqrt2)-1)\).
The six rows were obtained by a finite exact search for a convex certificate;
their use below is justified solely by the displayed matrices and direct
algebraic verification, independently of that search.
\[
\begin{array}{c|cccc}
 s&w_1&w_2&w_3&w_4\\ \hline
 (-,+,-,-)&1/2&1/2-\sqrt2/3&0&\sqrt2/3\\
 (-,+,-,+)&1/6&1/6&1/3&1/3\\
 (-,+,+,-)&3/20&1/20&1/4&11/20\\
 (-,+,+,+)&3/10&0&3/20&11/20\\
 (+,+,-,-)&3/20&1/10&3/4&0\\
 (+,+,-,+)&1/10&1/5&11/20&3/20
\end{array}
\]
\end{lemma}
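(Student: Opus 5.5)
The plan is to reduce the uniform-in-\(q\) statement to finitely many exact checks, using the fact that \(Q_q\) is affine in the two scalars \(t_q\) and \(t_qq\), and that these two scalars lie on a fixed line. The weight conditions are immediate: in each row the entries are visibly nonnegative rationals, or \(1/2-\sqrt2/3>0\) and \(\sqrt2/3\) in the first row, since \(\sqrt2/3<1/2\). Summing each row to one is a one-line check.

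For the reduction, write \(S=\sum_j w_js_jM_j\), a fixed rational-or-\(\mathbb Q(\sqrt2)\) symmetric matrix for each row. Put \(u=t_qq\), so that
\[
 Q_q=uE-t_q\,vv^{\mathsf T}-S.
\]
With \(\ell=\sqrt2-1\) we have \(2-\sqrt2=1-\ell\), so the definition of \(t_q\) reads \(t_q\bigl(q(1-\ell)-1\bigr)=\ell\). Equivalently, \(u(1-\ell)-t_q=\ell\), so \((u,t_q)=\bigl((\ell+t_q)/(1-\ell),\,t_q\bigr)\) lies on a line parametrized affinely by \(t_q\).

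As \(q\) increases from \(9\) to \(\infty\), \(t_q\) decreases from \(t_9=\ell/(7-9\ell)\) to \(0\). The limit point is \(u_\infty=\ell/(1-\ell)=1/\sqrt2\). Hence for every \(q\geq9\) there is \(\theta_q=t_q/t_9\in(0,1]\) with
\[
 Q_q=\theta_qQ_9+(1-\theta_q)Q_\infty,\qquad Q_\infty=\tfrac1{\sqrt2}E-S.
\]
It therefore suffices to prove, for each of the six rows, that \(Q_9\succ0\) and \(Q_\infty\succeq0\). Then every \(Q_q\) with \(9\leq q<\infty\) is a combination with positive weight \(\theta_q\) on a positive definite matrix plus a positive semidefinite one, hence is positive definite. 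This is the convex interpolation between the \(q=9\) and limiting certificates announced in the introduction.

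The remaining work is the twelve explicit \(3\times3\) checks in \(\mathbb Q(\sqrt2)\). I would form \(S\) row by row from \(M_1,\dots,M_4\) and the sign pattern \(s\), write down \(Q_9=9t_9E-t_9vv^{\mathsf T}-S\) and \(Q_\infty\), and apply Sylvester's criterion. For \(Q_9\) I would verify that all three leading principal minors are positive. For \(Q_\infty\) I would verify that all principal minors are nonnegative, and in fact try to show all leading minors are positive so that strictness is not even needed from \(Q_9\). Each minor is an element \(\alpha+\beta\sqrt2\) with rational \(\alpha,\beta\), and its sign is decided exactly by comparing \(\alpha^2\) with \(2\beta^2\) when the signs of \(\alpha\) and \(\beta\) differ.

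The main obstacle I expect is the limiting matrix \(Q_\infty\). The certificates were found by a search that presumably pushes against the boundary, so some \(Q_\infty\) may be singular. In that case positive definiteness is unavailable and I must check all principal minors rather than just the leading ones, or exhibit the kernel vector explicitly and verify positivity on its orthogonal complement. The \(q=9\) side should have slack and be routine. The bookkeeping of \(S\) for the six sign patterns is the other place where an error could creep in, so I would present each \(S\) and each pair of determinants explicitly so that the verification is independent of the search that produced the table.
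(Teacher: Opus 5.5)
Your proposal is correct and follows the paper's proof: the same convex interpolation \(Q_q=\theta_qQ_9+(1-\theta_q)Q_\infty\) with \(\theta_q=t_q/t_9=\mathcal D_9/\mathcal D_q\), Sylvester's criterion for \(Q_9\), and a positive-semidefiniteness check for \(Q_\infty\). You rightly expect \(Q_\infty\) to be singular in some rows; it is, in the first two, and the paper handles this with a positive definite leading \(2\times2\) block plus \(\det Q_\infty\geq0\), which is equivalent to your principal-minor check.
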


\begin{proof}
Set \(Q_\infty=E/\sqrt2-\sum_j w_js_jM_j\).
Writing \(\mathcal D_q=q(2-\sqrt2)-1\), direct algebra gives
\[
 Q_q=\frac{\mathcal D_9}{\mathcal D_q}\,Q_9+
       \left(1-\frac{\mathcal D_9}{\mathcal D_q}\right)Q_\infty.
\]
It suffices to verify \(Q_9\succ0\) and \(Q_\infty\succeq0\) for the
six displayed rows.  We give explicit principal-minor certificates.
In every row, all three leading principal minors of \(Q_9\) are greater
than \(1/100\).  Its first leading minor is
\((8+64\sqrt2)/127\).  For \(Q_\infty\), the first leading minor is
\(1/\sqrt2\); its second and third leading minors, in the table's order,
are
\[
\begin{array}{c|c}
 \text{second leading minor}&\det Q_\infty\\ \hline
 \sqrt2/2&0\\
 1+\sqrt2/2&0\\
 279/400-\sqrt2/10&739/1000-49\sqrt2/200\\
 7/16+3\sqrt2/20&1971/2000-103\sqrt2/200\\
 -41/400+3\sqrt2/5&1359/1000-353\sqrt2/400\\
 7/16+9\sqrt2/20&57/80-11\sqrt2/40
\end{array}
\]
These values, and the asserted \(Q_9\) bounds, follow by substituting
the displayed matrices and weights; all inequalities can be verified
using \(7/5<\sqrt2<10/7\).
The upper-left block of order two of \(Q_\infty\) is positive definite.
Its Schur complement is nonnegative because its determinant is
nonnegative.  Thus \(Q_\infty\succeq0\).
Sylvester's criterion gives \(Q_9\succ0\), and the coefficient
\(\mathcal D_9/\mathcal D_q\)
is strictly positive for every finite \(q\geq9\), proving the assertion.
\end{proof}

\begin{theorem}[Global optimality over all real three-value trace windows]
\label{thm:traceglobal}
For \(q=3^h\), \(h\geq2\), let \(\mathcal T_q\) consist of all real unit
vectors on \(\mathbb F_q^\times\) which are constant on each absolute-trace
class, with no condition on the sum of their three values.  Then
\[
 \max_{\phi\in\mathcal T_q}\sigma_{\min}(\mathcal M_\phi)
 =\frac{q(\sqrt2-1)}{q(2-\sqrt2)-1}.
\]
Equality holds exactly for the windows of Theorem~\ref{thm:traceflat},
up to global sign and interchange of trace classes \(1\) and \(2\).
In particular, zero trace mean is a consequence of optimality, rather
than a restriction imposed on the optimization problem.
\end{theorem}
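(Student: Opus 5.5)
Every \(\phi\in\mathcal T_q\) has the form \(F/\sqrt N\) with \(F(u)=a+\gamma\psi(u)+\overline\gamma\psi(-u)\), \(\gamma=x+iy\). The three trace values \(F_0=a+2x\), \(F_{1,2}=a-x\mp\sqrt3y\) are an invertible real linear image of \((a,x,y)\). So \(\mathcal T_q\) is exactly the set of such normalized windows with \((a,x,y)\neq0\) and \(F\not\equiv0\) on \(\mathbb F_q^\times\), and Theorem~\ref{thm:generaltrace} applies to all of them. Since \(\mu_-\geq0\), that theorem gives
\[
 \sigma_{\min}(\mathcal M_\phi)^2\leq \frac{q^2}{N^2}\min_{1\leq j\leq4}L_j^2 .
\]
The desired upper bound \(\sigma_{\min}(\mathcal M_\phi)\leq q\,t_q\), with \(t_q=(\sqrt2-1)/(q(2-\sqrt2)-1)\), therefore reduces to showing \(\min_j|L_j|\leq t_qN\) for every real \((a,x,y)\). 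Attainment of this value is already provided by Theorem~\ref{thm:traceflat}.

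I would split into cases according to the sign vector \(s=(\operatorname{sgn}L_1,\dots,\operatorname{sgn}L_4)\), choosing signs arbitrarily where some \(L_j=0\); any sign works there, since then \(\min|L_j|=0\). The overall substitution \((a,x,y)\mapsto(a,x,-y)\) sends \(L_2,L_4\) to their negatives and fixes \(L_1,L_3,N\). Since the bound only involves \(|L_j|\), I may assume \(L_2\geq0\). Global sign of \(F\) does not affect \(L_j\), because the \(L_j\) are quadratic.

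\textbf{Case \(L_1,L_3\geq0\).} This is Theorem~\ref{thm:tracechamber}, which already handles both signs of \(L_2\) and \(L_4\) and classifies equality.

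\textbf{Remaining sign patterns with \(L_2\geq0\).} These are the six rows of Lemma~\ref{lem:traceothercertificates}. In each row, nonnegative weights summing to one give
\[
 \min_j|L_j|\leq\sum_j w_js_jL_j=z^{\mathsf T}\Bigl(\sum_j w_js_jM_j\Bigr)z .
\]
Because \(Q_q\succ0\), this is strictly less than \(t_q z^{\mathsf T}\mathsf N_qz=t_qN\) for \(z\neq0\). So in these regions the bound holds strictly.

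Equality therefore forces one of three things:
\begin{enumerate}[label=(\roman*)]
\item \(L_1,L_3\geq0\), and we are on an equality line of Lemma~\ref{lem:tracecertificate};
\item some \(L_j=0\), which gives \(\sigma_{\min}=0\) and is not optimal;
\item we are in a region where the strict certificate applies, which is impossible.
\end{enumerate}
On the equality lines, \((a,x,y)=s(0,1,\pm\ell)\). These are precisely the windows of Theorem~\ref{thm:traceflat}, up to sign and the swap \(y\mapsto-y\), which interchanges trace classes \(1\) and \(2\). Since \(a=0\) there, zero trace mean follows.

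The main obstacle is the boundary bookkeeping. A point lying on a sign-region boundary must be covered by some certificate. If some \(L_j=0\) the point is trivially suboptimal, so the closed regions suffice. One also has to check that the reduction \(L_2\geq0\) correctly maps the equality set, so that the \(\varepsilon=-1\) line of Lemma~\ref{lem:tracecertificate} corresponds to the trace-class interchange, and that the degenerate case \(F\equiv0\) on \(\mathbb F_q^\times\) (\(N=0\)) cannot occur for \(z\neq0\). The latter holds because \(\mathsf N_q\) is positive definite for \(q\geq9\).
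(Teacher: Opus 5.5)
Your proposal is correct and follows the paper's proof essentially step for step. You reduce via Theorem~\ref{thm:generaltrace} to showing $\min_j|L_j|\le t_qN$ and normalize $L_2\ge0$ by the interchange $y\mapsto-y$. You then invoke Theorem~\ref{thm:tracechamber}, with its equality classification, when $L_1,L_3\ge0$, use the six strict certificates of Lemma~\ref{lem:traceothercertificates} otherwise, and cite Theorem~\ref{thm:traceflat} for attainment. The differences are cosmetic: you work with closed sign regions, assigning arbitrary signs where some $L_j=0$, rather than first discarding that case, and your appeal to $\mu_-\ge0$ is not needed for the upper bound.
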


\begin{proof}
Use the coordinates of Theorem~\ref{thm:generaltrace}.  If any \(L_j=0\),
the scalar term in its exact formula is zero, and hence the least singular
value is zero, since the remaining term \(\mu_-(K)\) is nonnegative for the
Gram matrix \(K\).  Otherwise interchange trace classes
\(1\) and \(2\), if necessary, to make \(L_2>0\): this sends \(y\) to
\(-y\), preserving \(L_1,L_3,N\) and reversing \(L_2,L_4\).
If \(L_1,L_3>0\), Theorem~\ref{thm:tracechamber} applies, including its
equality classification.

If \(L_1,L_3\) are not both positive, then, after the normalization
\(L_2>0\), their two possible signs together with the sign of \(L_4\)
give exactly the six rows displayed in
Lemma~\ref{lem:traceothercertificates}.  For the corresponding weights,
\[
 \min_j|L_j|\leq\sum_j w_j|L_j|
 =z^{\mathsf T}\left(\sum_j w_js_jM_j\right)z
 <t_q z^{\mathsf T}\mathsf N_qz=t_qN.
\]
The strict inequality follows from \(z\ne0\) and \(Q_q\succ0\).
Theorem~\ref{thm:generaltrace} therefore gives
\(\sigma_{\min}(\mathcal M_\phi)<qt_q\) in every remaining sign region.
Finally, Theorem~\ref{thm:traceflat} supplies attainment of \(qt_q\).
\end{proof}

\begin{corollary}[Sharp least-squares noise amplification]
\label{cor:tracenoise}
Let \(\phi\in\mathcal T_q\) be a matrix-recovery window, equivalently,
let \(\mathcal M_\phi\) be injective.  Suppose that noisy orbit measurements
are given by \(y=\mathcal M_\phi(A)+e\).  The least-squares reconstruction
\(\widehat A=\mathcal M_\phi^\dagger y\) obeys
\[
 \|\widehat A-A\|_F\leq
 \|\mathcal M_\phi^\dagger\|\,\|e\|_2.
\]
Over all matrix-recovery windows \(\phi\in\mathcal T_q\), the smallest
possible worst-case amplification factor is
\[
 \min_{\phi\in\mathcal T_q}\|\mathcal M_\phi^\dagger\|
 =\frac{q(2-\sqrt2)-1}{q(\sqrt2-1)}
 =\sqrt2\left(1-\frac1{q(2-\sqrt2)}\right).
\]
It is attained exactly by the windows in Theorem~\ref{thm:traceflat},
up to their stated symmetries.
\end{corollary}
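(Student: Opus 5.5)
The plan has two parts: first the deterministic error bound, then the minimization of \(\|\mathcal M_\phi^\dagger\|\) over \(\mathcal T_q\), which is a reformulation of Theorem~\ref{thm:traceglobal}.

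\emph{The error bound.} Injectivity of \(\mathcal M_\phi\) gives \(\mathcal M_\phi^\dagger\mathcal M_\phi=I\) on \(\C^{d\times d}\). Hence \(\widehat A-A=\mathcal M_\phi^\dagger(\mathcal M_\phi(A)+e)-A=\mathcal M_\phi^\dagger e\), and the stated inequality follows from the definition of the operator norm.

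\emph{Amplification versus least singular value.} For an injective operator between finite-dimensional Hilbert spaces, the singular value decomposition gives \(\|\mathcal M_\phi^\dagger\|=1/\sigma_{\min}(\mathcal M_\phi)\), with \(\sigma_{\min}>0\). The factor is actually attained as a worst case. Take \(e\) along the left singular vector belonging to \(\sigma_{\min}(\mathcal M_\phi)\), which lies in the range of \(\mathcal M_\phi\). Then \(\|\mathcal M_\phi^\dagger e\|=\|e\|/\sigma_{\min}\), so the worst-case amplification factor is exactly \(1/\sigma_{\min}(\mathcal M_\phi)\). Here \(\sigma_{\min}\) is understood on the domain \(\C^{d\times d}\), as in Proposition~\ref{prop:qblock}.

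\emph{Minimization and equality cases.} By the previous step, minimizing \(\|\mathcal M_\phi^\dagger\|\) over matrix-recovery windows in \(\mathcal T_q\) is the same as maximizing \(\sigma_{\min}(\mathcal M_\phi)\) over those windows. Windows with \(\sigma_{\min}=0\) are exactly the non-recovery ones, so excluding them does not change the supremum. Theorem~\ref{thm:traceglobal} gives the maximum \(q(\sqrt2-1)/(q(2-\sqrt2)-1)\). It is attained exactly by the windows of Theorem~\ref{thm:traceflat}, up to global sign and interchange of trace classes \(1\) and \(2\). These windows are matrix-recovery windows because their value is positive. Taking reciprocals yields the displayed minimum and the same equality classification.

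\emph{The closed form.} It remains to check the algebraic identity
\[
\frac{q(2-\sqrt2)-1}{q(\sqrt2-1)}=\sqrt2\Bigl(1-\frac1{q(2-\sqrt2)}\Bigr).
\]
Using \(2-\sqrt2=\sqrt2(\sqrt2-1)\), the left side equals \(\sqrt2-\frac{1}{q(\sqrt2-1)}\). Also
\[
\frac{\sqrt2}{q(2-\sqrt2)}=\frac{\sqrt2}{q\sqrt2(\sqrt2-1)}=\frac1{q(\sqrt2-1)},
\]
so the right side equals \(\sqrt2-\frac1{q(\sqrt2-1)}\) as well.

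No step is a genuine obstacle. The only points needing care are that \(\|\mathcal M_\phi^\dagger\|\) is measured on the correct domain, so that it equals \(1/\sigma_{\min}\), and that the optimizers of Theorem~\ref{thm:traceglobal} are indeed matrix-recovery windows.
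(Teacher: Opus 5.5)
Your proposal is correct and follows the paper's own argument: injectivity gives \(\mathcal M_\phi^\dagger\mathcal M_\phi=I\) for the error bound, and \(\|\mathcal M_\phi^\dagger\|=1/\sigma_{\min}(\mathcal M_\phi)\) turns the minimization into Theorem~\ref{thm:traceglobal}, whose optimizers are recovery windows because their least singular value is positive (the paper instead cites Theorem~\ref{thm:generaltrace} for this point). Your explicit verification of the closed form \(\sqrt2\bigl(1-1/(q(2-\sqrt2))\bigr)\), which the paper leaves implicit, is also correct.
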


\begin{proof}
For an injective finite-dimensional measurement operator,
\(\|\mathcal M_\phi^\dagger\|=1/\sigma_{\min}(\mathcal M_\phi)\).
The assertion follows from Theorem~\ref{thm:traceglobal}; its optimizing
windows are matrix-recovery windows by Theorem~\ref{thm:generaltrace}.
Injectivity also gives \(\mathcal M_\phi^\dagger\mathcal M_\phi=I\), so
\(\widehat A-A=\mathcal M_\phi^\dagger e\), which proves the error bound.
\end{proof}

\begin{corollary}[Quantitative certificate stability]
\label{cor:tracecertstability}
Use the notation of Lemma~\ref{lem:tracecertificate}, and normalize the
unnormalized trace function by \(N=1\).  Suppose that \(L_1,L_3\geq0\)
and
\[
 \sigma_{\min}(\mathcal M_\phi)\geq(1-\delta)qt_q,
 \qquad 0\leq\delta<1.
\]
If \(\varepsilon L_2=|L_2|\), then
\[
 t_q(q-1)\bigl(a-(1-\ell)x+\varepsilon\sqrt2\,y\bigr)^2
 +(1+t_q)\bigl(y-\varepsilon\ell x\bigr)^2
 \leq\delta t_q.
\]
In particular,
\[
 \left|a-(1-\ell)x+\varepsilon\sqrt2\,y\right|
 \leq\sqrt{\frac{\delta}{q-1}},
 \qquad
 \left|y-\varepsilon\ell x\right|
 \leq\sqrt{\frac{\delta t_q}{1+t_q}}.
\]
\end{corollary}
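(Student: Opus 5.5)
The plan is to reproduce the chain of inequalities from the proof of Theorem~\ref{thm:tracechamber}, but now keeping track of the slack. With \(N=1\), Theorem~\ref{thm:generaltrace} gives
\[
 \sigma_{\min}(\mathcal M_\phi)^2\leq q\cdot q\min_j L_j^2 ,
\]
so that \(\sigma_{\min}(\mathcal M_\phi)\leq q\min_j|L_j|\). The hypothesis \(\sigma_{\min}(\mathcal M_\phi)\geq(1-\delta)qt_q\) therefore yields \(\min_j|L_j|\geq(1-\delta)t_q\).

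Next, since \(L_1,L_3\geq0\) and \(\varepsilon L_2=|L_2|\), the convex combination from Lemma~\ref{lem:tracecertificate} satisfies
\[
 \omega_1L_1+\varepsilon\omega_2L_2+\omega_3L_3
 =\omega_1|L_1|+\omega_2|L_2|+\omega_3|L_3|
 \geq\min_{1\leq j\leq 3}|L_j|\geq(1-\delta)t_q .
\]
Here I use that the weights are positive and sum to one, and that the minimum over \(j\le 3\) dominates the minimum over all four indices.

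Then I substitute this into the sum-of-squares identity of Lemma~\ref{lem:tracecertificate}, using \(N=1\):
\[
 t_q(q-1)\bigl(a-(1-\ell)x+\varepsilon\sqrt2\,y\bigr)^2+(1+t_q)\bigl(y-\varepsilon\ell x\bigr)^2
 =t_q-\bigl(\omega_1L_1+\varepsilon\omega_2L_2+\omega_3L_3\bigr)
 \leq t_q-(1-\delta)t_q=\delta t_q .
\]
This is the main inequality.

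The two componentwise bounds follow by dropping one nonnegative square at a time. Dropping the second square gives \(t_q(q-1)(\cdots)^2\leq\delta t_q\), and dividing by \(t_q>0\) gives the first bound. Dropping the first square gives \((1+t_q)(\cdots)^2\leq\delta t_q\), which is the second bound.

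The only point needing care is the first step: the bound \(\sigma_{\min}\leq q\min|L_j|\) has to use the exact formula of Theorem~\ref{thm:generaltrace} with the normalization \(N=1\) handled correctly. That normalization is legitimate because \(N=z^{\mathsf T}\mathsf N_qz>0\) for \(z\neq0\), and everything is homogeneous of degree two.
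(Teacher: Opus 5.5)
Your proposal is correct and follows the paper's proof step for step. The exact formula of Theorem~\ref{thm:generaltrace} with \(N=1\) gives \(\min_j|L_j|\geq(1-\delta)t_q\); the convex combination \(\omega_1L_1+\varepsilon\omega_2L_2+\omega_3L_3\), whose terms are nonnegative because \(L_1,L_3\geq0\) and \(\varepsilon L_2=|L_2|\), dominates this minimum; the two-square identity of Lemma~\ref{lem:tracecertificate} then bounds the left-hand side by \(\delta t_q\), and dropping either square gives the componentwise estimates. Your remark that the normalization \(N=1\) is legitimate, since \(\mathsf N_q\succ0\) and everything is homogeneous of degree two, makes explicit a point the paper leaves implicit.
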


\begin{proof}
The exact block formula in Theorem~\ref{thm:generaltrace} implies
\(\min_j|L_j|\geq(1-\delta)t_q\).  The positive weighted average in the
proof of Theorem~\ref{thm:tracechamber} is at least this minimum.
The left-hand side of the claimed inequality is the defect between
\(t_q\) and that average, by Lemma~\ref{lem:tracecertificate}; it is
therefore at most \(\delta t_q\).  Dropping either nonnegative term gives
the two displayed estimates.
\end{proof}

\begin{theorem}[Optimality in the zero-sum trace class]
\label{thm:traceoptimal}
Let \(q=3^h\) with \(h\geq2\), and let \(\mathcal T_q^0\) be the family
of real unit vectors \(\phi\in\mathbb R^{\mathbb F_q^\times}\) of the form
\[
 \phi(m)=u_{\tau(m)},
 \qquad u_0+u_1+u_2=0,
\]
where \(\tau=\operatorname{Tr}_{\mathbb F_q/\mathbb F_3}\).  Then
\[
 \max_{\phi\in\mathcal T_q^0}\sigma_{\min}(\mathcal M_\phi)
 =\frac{q(\sqrt2-1)}{q(2-\sqrt2)-1}.
\]
The maximizers are precisely the trace windows in Theorem~\ref{thm:traceflat},
up to a global sign and interchange of the trace classes \(1\) and \(2\).
\end{theorem}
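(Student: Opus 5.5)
The plan is to deduce Theorem~\ref{thm:traceoptimal} directly from Theorem~\ref{thm:traceglobal}, since \(\mathcal T_q^0\subset\mathcal T_q\). The only points to check are that the extremizers of the larger class lie in the smaller one, and that the symmetry classes match.

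\emph{Upper bound.} Every \(\phi\in\mathcal T_q^0\) is a real unit vector that is constant on the three absolute-trace classes, so it belongs to \(\mathcal T_q\). Theorem~\ref{thm:traceglobal} then gives
\[
\sigma_{\min}(\mathcal M_\phi)\le \frac{q(\sqrt2-1)}{q(2-\sqrt2)-1}
\]
at once. The constraint \(u_0+u_1+u_2=0\) only shrinks the feasible set, so the supremum over \(\mathcal T_q^0\) can be no larger than over \(\mathcal T_q\).

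\emph{Attainment.} The window \(\phi_q\) of Theorem~\ref{thm:traceflat} has values proportional to \((1,r,-1-r)\), whose sum is zero, and it is a unit vector by the normalization \(N_q\) checked in that proof. Hence \(\phi_q\in\mathcal T_q^0\), and Theorem~\ref{thm:traceflat} shows it attains the displayed value. The global sign change \(\phi\mapsto-\phi\) and the swap of trace classes \(1\) and \(2\) both preserve the zero-sum condition and leave \(\mathcal M_\phi\) invariant. The sign change is immediate because \(\mathcal M_\phi\) is quadratic in \(\phi\). For the swap, as noted in the proof of Theorem~\ref{thm:traceglobal}, it sends \(y\mapsto-y\) and preserves \(\{|L_j|\}\), \(N\) and \(K\). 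So all the stated images of \(\phi_q\) are maximizers in \(\mathcal T_q^0\).

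\emph{Classification.} If \(\phi\in\mathcal T_q^0\) attains the maximum, then, viewed in \(\mathcal T_q\), it attains the global maximum there. The equality clause of Theorem~\ref{thm:traceglobal} then forces \(\phi\) to be one of the windows of Theorem~\ref{thm:traceflat} up to sign and interchange. In coordinates this is the case \((a,x,y)=s(0,1,\pm\ell)\) from Lemma~\ref{lem:tracecertificate}, where \(a=0\) is exactly the zero-sum condition. The same argument yields the result entirely inside the zero-sum class.

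I do not expect a real obstacle, because all the analytic work sits in Theorem~\ref{thm:traceglobal}. The one step that needs care is bookkeeping. The zero-sum parametrization \(F=\gamma\psi+\overline\gamma\psi(-\cdot)\) of Lemma~\ref{lem:tracebranches} is the case \(a=0\) of Theorem~\ref{thm:generaltrace}. Its trace values are \(F_0=2x\) and \(F_{1,2}=-x\mp\sqrt3y\), so \(u_0+u_1+u_2=0\) is equivalent to \(a=0\) and the two descriptions of \(\mathcal T_q^0\) agree. Finally, normalization by \(N\) matches unit vectors with nonzero \((x,y)\) up to positive scaling, which is harmless because the windows are defined only up to that scaling.
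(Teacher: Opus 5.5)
Your proof is correct, but it takes a different route from the paper's. You obtain the result as a specialization of Theorem~\ref{thm:traceglobal}. This is legitimate: that theorem's proof uses Theorems~\ref{thm:generaltrace}, \ref{thm:tracechamber}, \ref{thm:traceflat} and Lemmas~\ref{lem:tracecertificate}--\ref{lem:traceothercertificates}, never Theorem~\ref{thm:traceoptimal}, so there is no circularity. Your identification of the zero-sum class with \(a=0\) is also right.

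The paper instead works entirely inside the zero-sum class. With \(c=\cos^2\theta\) and \(N=2\rho^2(q-2c)\), the two visible branches of Lemma~\ref{lem:tracebranches} alone give
\[
 \sigma_{\min}(\mathcal M_\phi)\leq H_q(c)=\frac{q}{\sqrt2\,(q-2c)}\sqrt{1-|1-8c+8c^2|}.
\]
An elementary one-variable argument then shows that \(H_q\) has a unique maximum at \(c_+=(2+\sqrt2)/4\), and Theorem~\ref{thm:traceflat} supplies attainment.

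Your route is shorter and makes explicit that Theorem~\ref{thm:traceoptimal} is logically subsumed by the global theorem. The cost is that it imports the heaviest part of the paper: the sum-of-squares identity and the positive-definiteness certificates whose verification the appendix audits. Unwinding your argument at \(a=0\) is instructive. There \(L_1=L_3=2\rho^2\cos2\theta\) and \(L_2=L_4=2\rho^2\sin2\theta\). So only the principal sign region and the single row \((-,+,-,+)\) of Lemma~\ref{lem:traceothercertificates} are ever invoked, and \(\min_j|L_j|\) is exactly the paper's branch minimum.

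The paper's direct proof needs no certificates. It also produces the explicit envelope \(H_q(c)\), on which Corollary~\ref{cor:traceisolation} depends; your route does not by itself give that isolation estimate.

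One wording point: interchanging trace classes \(1\) and \(2\) is \(\phi\mapsto\phi(-\,\cdot)=\pi_q(0,-1)\phi\). This permutes the group coordinates of \(\mathcal M_\phi\) rather than leaving it literally invariant. Only \(\sigma_{\min}\) is preserved, which is all you actually use, and your check that \(N\), \(K\) and the \(|L_j|\) are unchanged already shows this.
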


\begin{proof}
For \(\phi\in\mathcal T_q^0\), write its three unnormalized values as
\[
 F(j)=\gamma\zeta^j+\overline\gamma\zeta^{-j}
 \qquad(j\in\mathbb F_3),
\]
where \(\zeta=e^{2\pi i/3}\) and \(\gamma=\rho e^{i\theta}\ne0\).
Put \(c=\cos^2\theta\).  The squared norm on \(\mathbb F_q^\times\) is
\[
 N=2\rho^2(q-2c).
 \tag{40}
\]
Both invariant subspaces in Lemma~\ref{lem:tracebranches} are nonzero for \(q\geq9\).
Hence the exact block criterion, Lemma~\ref{lem:tracebranches}, and (40)
imply
\[
 \sigma_{\min}(\mathcal M_\phi)
 \leq
 \frac{q}{\sqrt2\,(q-2c)}
 \sqrt{1-|1-8c+8c^2|}.
 \tag{41}
\]

Set \(c_+=(2+\sqrt2)/4\).  If \(0\leq c\leq c_+\), then the square-root
factor in (41) is at most one and \(q-2c\geq q-2c_+\).  Equality in the
resulting bound is possible only at \(c=c_+\).  If \(c_+\leq c\leq1\), the
square-root factor is \(\sqrt{8c(1-c)}\).  The logarithmic derivative of
the right side of (41), apart from its constant factor, is
\[
 \frac{1-2c}{2c(1-c)}+\frac2{q-2c}<0.
\]
Indeed, the magnitude of the first negative summand is at least
\(2\sqrt2\), while the second summand is at most \(2/7\).  Thus the right
side of (41) is maximized uniquely at \(c=c_+\).  At this point it equals
\[
 \frac{q}{\sqrt2(q-2c_+)}
 =\frac{q(\sqrt2-1)}{q(2-\sqrt2)-1}.
\]
The condition \(c=c_+\) is equivalent to \(\operatorname{Re}(\gamma^4)=0\)
with the larger of the two possible values of \(\cos^2\theta\).  After a
global sign and, if necessary, replacing \(\tau\) by \(-\tau\), this is
exactly the window in Theorem~\ref{thm:traceflat}.  That theorem shows the
upper bound is attained, completing the proof.
\end{proof}

\begin{corollary}[Quantitative isolation of the trace optimizer]
\label{cor:traceisolation}
Let \(S_q=q(\sqrt2-1)/(q(2-\sqrt2)-1)\).  For a window
\(\phi\in\mathcal T_q^0\), write its zero-sum parameter as
\(c=\cos^2\theta\) as in the proof of Theorem~\ref{thm:traceoptimal}, and
put \(c_+=(2+\sqrt2)/4\).  If
\[
 \sigma_{\min}(\mathcal M_\phi)\geq(1-\varepsilon)S_q,
 \qquad
 0\leq\varepsilon<\frac{\sqrt2}{2(q-1)},
\]
then
\[
 |c-c_+|\leq\frac{\varepsilon}{\sqrt2}.
 \tag{42}
\]
Thus the unique trace-class optimum is quantitatively stable: a relative
loss of \(\varepsilon\) forces its phase parameter to be within
\(\varepsilon/\sqrt2\) of the optimum.
\end{corollary}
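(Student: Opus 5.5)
We reuse the upper bound (41) from the proof of Theorem~\ref{thm:traceoptimal}, read as a function of \(c\in[0,1]\):
\[
 g(c)=\frac{q}{\sqrt2\,(q-2c)}\sqrt{1-|1-8c+8c^2|}\geq\sigma_{\min}(\mathcal M_\phi).
\]
Since \(g(c_+)=S_q\), the hypothesis gives \(g(c)\geq(1-\varepsilon)g(c_+)\). The plan is to show that \(g\) falls off from its peak at \(c_+\) at a linear rate with slope at least \(\sqrt2\,S_q\) on both sides, which yields (42).

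\emph{Left of the peak}, \(c\leq c_+\). Here the square-root factor is at most one, so
\[
 g(c)\leq\frac{q}{\sqrt2(q-2c)}.
\]
This bound only decreases as \(c\) moves away from \(c_+\), and it does so slowly, so it cannot give (42) by itself. We therefore split this side further.

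On \([c_-,c_+]\), where \(c_-=(2-\sqrt2)/4\), we have \(1-8c+8c^2\leq0\), and the square-root factor is
\[
 \sqrt{8c-8c^2}.
\]
This exceeds one only for \(c\) near \(1/2\). More precisely, \(8c-8c^2\leq1\) fails for \(c\in(c_-,c_+)\), so on that interval the factor is \(\sqrt{2-8c^2+8c-1}\)... We recompute: \(1-|1-8c+8c^2|\) equals \(8c-8c^2\) when \(1-8c+8c^2\geq0\), and equals \(2-8c+8c^2\) when \(1-8c+8c^2<0\). The roots of \(1-8c+8c^2\) are exactly \(c_\pm\).

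So for \(c\in(c_-,c_+)\) the factor is \(\sqrt{2-8c(1-c)}\), which increases toward one as \(c\to c_+^-\). Its derivative at \(c_+\) is
\[
 \frac{-8(1-2c_+)}{2}=4\sqrt2/2\cdot\ldots,
\]
and in particular the slope is of order \(2\sqrt2\). We will show concavity, or a direct estimate of the form \(\sqrt{2-8c(1-c)}\leq1-\sqrt2(c_+-c)\), to obtain a linear drop with constant at least \(\sqrt2\). On \(c\leq c_-\) the factor is \(\sqrt{8c(1-c)}\leq1\) with \(c\) far from \(c_+\), and we handle this region separately.

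\emph{Right of the peak}, \(c\geq c_+\). The logarithmic-derivative estimate from the proof of Theorem~\ref{thm:traceoptimal} gives
\[
 \frac{d}{dc}\log g\leq-2\sqrt2+\frac27.
\]
Integrating, \(g(c)\leq S_q e^{-(2\sqrt2-2/7)(c-c_+)}\). Using \(e^{-u}\leq1-u+u^2/2\), or a chord bound since \(c-c_+\leq1-c_+\), we expect
\[
 g(c)\leq S_q\bigl(1-\sqrt2(c-c_+)\bigr)
\]
for \(c\in[c_+,1]\). This would give \(c-c_+\leq\varepsilon/\sqrt2\).

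\emph{The main obstacle.} On the left, the prefactor \(q/(q-2c)\) decreases as \(c\) decreases, which helps, but the ratio \((q-2c_+)/(q-2c)\) approaches one as \(q\to\infty\). The linear drop therefore has to come from the square-root factor. The restriction \(\varepsilon<\sqrt2/(2(q-1))\) indicates that the author instead used a crude argument, namely
\[
 \frac{q-2c_+}{q-2c}\leq1-\frac{2(c_+-c)}{q}
\]
combined with the factor being at most one. This gives only \(c_+-c\leq q\varepsilon/2\), and the smallness of \(\varepsilon\) then localizes \(c\).

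We will first try to obtain the clean bound \(|c-c_+|\leq\varepsilon/\sqrt2\) on the left via the factor \(\sqrt{2-8c(1-c)}\) on \((c_-,c_+)\). For \(c\leq c_-\), the hypothesis on \(\varepsilon\) makes \((1-\varepsilon)S_q\) exceed \(\sup_{c\leq c_-}g\), which rules out that region entirely.
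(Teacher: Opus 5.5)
\textbf{The gap is on the left of the peak, on the interval \((c_-,\tfrac12)\).} Since \(2-8c(1-c)=8(c-\tfrac12)^2\), the square-root factor on \((c_-,c_+)\) is exactly \(2\sqrt2\,|c-\tfrac12|\). It is V-shaped: it vanishes at \(c=\tfrac12\) and climbs back to the value \(1\) at \(c_-\). So neither concavity nor your estimate \(\sqrt{2-8c(1-c)}\le1-\sqrt2(c_+-c)\) can hold on all of \((c_-,c_+)\). As \(c\downarrow c_-\), the left side tends to \(1\) while the right side tends to \(1-\sqrt2(c_+-c_-)=0\). Including the prefactor does not rescue it, since \(g(c_-)/S_q=(q-2c_+)/(q-2c_-)\) is close to \(1\). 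You invoke the hypothesis on \(\varepsilon\) only for \(c\le c_-\), so nothing in your plan covers \((c_-,\tfrac12)\).

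Your ``main obstacle'' paragraph also misreads the role of the threshold. The left side near \(c_+\) is easy. The condition \(\varepsilon<\sqrt2/(2(q-1))\) is not a crude localization device; it is exactly what excludes the second peak at \(c_-\). There the bound (41) is within relative \(O(1/q)\) of \(S_q\): \(g(c_-)/S_q=1-\sqrt2/(q-1+\sqrt2/2)\). No argument using only (41) can avoid a threshold of that order.

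\textbf{The repair is what the paper does: use the hypothesis on the whole range \(c<\tfrac12\).} There the factor is at most one and \(q-2c>q-1\), so \(g(c)/S_q<(q-2c_+)/(q-1)=1-\sqrt2/(2(q-1))<1-\varepsilon\), a contradiction. On \([\tfrac12,c_+]\) the factor equals \(1-2\sqrt2(c_+-c)\) exactly, and \((q-2c_+)/(q-2c)\le1\). Hence \(g(c)/S_q\le1-2\sqrt2(c_+-c)\le1-\sqrt2(c_+-c)\). The paper phrases this step as \(|1-8c+8c^2|\ge2\sqrt2(c_+-c)\) together with \(\sqrt{1-x}\le1-x/2\).

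\textbf{Your right-of-peak argument is correct and takes a different route.} Put \(\kappa=2\sqrt2-\tfrac27\). You integrate \(\frac{d}{dc}\log g\le-\kappa\) and then use \(e^{-x}\le1-x+x^2/2\). This yields \(g(c)\le S_q(1-\sqrt2(c-c_+))\), because \(c-c_+\le(2-\sqrt2)/4\) is well below \(2(\kappa-\sqrt2)/\kappa^2\approx0.35\). The paper instead factors \(1-8c+8c^2=8(c-c_-)(c-c_+)\ge4\sqrt2(c-c_+)\) and bounds \((q-2c_+)/(q-2c)\le1+2(c-c_+)/7\). That avoids integration but reaches the same slope.
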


\begin{proof}
Let \(H_q(c)\) denote the right-hand side of (41).  The proof of
Theorem~\ref{thm:traceoptimal} gives
\(\sigma_{\min}(\mathcal M_\phi)\leq H_q(c)\) and
\(H_q(c_+)=S_q\).  If \(c<1/2\), then
\[
 \frac{H_q(c)}{S_q}
 \leq\frac{q-2c_+}{q-1}
 =1-\frac{\sqrt2}{2(q-1)},
\]
which is incompatible with the hypothesis.  Hence \(c\geq1/2\).

Put \(f(c)=1-8c+8c^2\).  For \(1/2\leq c\leq c_+\), we have
\[
 |f(c)|=8(c-c_-)(c_+-c)\geq2\sqrt2(c_+-c),
 \qquad c_-=\frac{2-\sqrt2}{4}.
\]
Using \(\sqrt{1-x}\leq1-x/2\), and observing that
\((q-2c_+)/(q-2c)\leq1\), yields
\[
 \frac{H_q(c)}{S_q}\leq1-\sqrt2(c_+-c).
 \tag{43}
\]
For \(c_+\leq c\leq1\), similarly,
\[
 f(c)=8(c-c_-)(c-c_+)\geq4\sqrt2(c-c_+).
\]
Since
\[
 \frac{q-2c_+}{q-2c}
 \leq1+\frac{2(c-c_+)}{q-2}
 \leq1+\frac{2(c-c_+)}7,
\]
where the final inequality uses \(q\geq9\), hence \(q-2\geq7\).
the same elementary square-root inequality gives
\[
 \frac{H_q(c)}{S_q}
 \leq1-\left(2\sqrt2-\frac27\right)(c-c_+)
 \leq1-\sqrt2(c-c_+).
 \tag{44}
\]
Combining (43)--(44) with
\(H_q(c)\geq\sigma_{\min}(\mathcal M_\phi)\geq(1-\varepsilon)S_q\)
proves (42).
\end{proof}

\begin{corollary}[The trace family beats every two-level window]
\label{cor:tracebeatstwolevel}
For every \(q=3^h\) with \(h\geq2\), the trace window in
Theorem~\ref{thm:traceflat} has a strictly larger least singular value than
every nonnegative two-level window \(\phi_{q,r}\).
\end{corollary}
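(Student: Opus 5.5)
The plan is to compare the two closed-form constants directly. For \(q=3^h\) with \(h\geq2\), Theorem~\ref{thm:traceflat} gives the trace window the exact value
\[
 S_q=\frac{q(\sqrt2-1)}{q(2-\sqrt2)-1}>\frac1{\sqrt2}.
\]
Theorem~\ref{thm:ppoptimal} applies because \(h\geq2\), with \(p=3\) and \(d=q-1\geq8\). It gives the exact value for every nonnegative two-level window:
\[
 \sigma_{\min}(\mathcal M_{\phi_{q,r}})=2\sqrt q\,\frac{|r-1|}{D_r}\sin\left(\frac{\pi}{6}\right)=\sqrt q\,\frac{|r-1|}{D_r}.
\]
So it suffices to show \(\sqrt q\,\max_{r\geq0}|r-1|/D_r<1/\sqrt2\). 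Together with \(S_q>1/\sqrt2\), this gives the strict inequality for every \(r\).

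I will not invoke (31), since it assumes \(d\geq10\) and \(q=9\) has \(d=8\). Instead I use the elementary maximization from the proof of Theorem~\ref{thm:ppoptimal}, which holds for every \(d\). On \([0,1]\), the quantity \(|r-1|/D_r\) is at most \(1/(d-1)\), attained at \(r=0\). On \([1,\infty)\), it is at most \(1/(2(\sqrt d+1))\). Hence
\[
 \max_{r\geq0}\sigma_{\min}(\mathcal M_{\phi_{q,r}})
 \leq\sqrt q\,\max\left\{\frac1{q-2},\ \frac1{2(\sqrt{q-1}+1)}\right\}.
\]

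It then remains to bound both terms by \(1/\sqrt2\) uniformly for \(q\geq9\).

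\textbf{First term.} We need \(\sqrt q/(q-2)<1/\sqrt2\). This holds because \(\sqrt q/(q-2)\) is decreasing in \(q\) for \(q>2\), and at \(q=9\) it equals \(3/7<0.71\).

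\textbf{Second term.} We need \(\sqrt q<\sqrt2(\sqrt{q-1}+1)\). This follows from \(\sqrt q\leq\sqrt{q-1}+1\), since squaring gives \(q\leq q+2\sqrt{q-1}\). Multiplying by \(\sqrt2>1\) then makes the inequality strict.

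The only potential obstacle is the small case \(q=9\), where (31) is not available and the \(r=0\) branch might compete. Because both branches are handled by the explicit bounds above, this case needs no separate treatment.
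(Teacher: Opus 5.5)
Your proof is correct and follows the paper's argument: you combine the exact trace value \(S_q>1/\sqrt2\) from Theorem~\ref{thm:traceflat} with the exact two-level formula (30) and the elementary maximization of \(|r-1|/D_r\). The only difference is cosmetic. You bound both branches (\(r=0\) and \(r=1+\sqrt d\)) uniformly against \(1/\sqrt2\), whereas the paper splits into \(q\geq27\), using (31) to get a value \(<1/2\), and \(q=9\), where direct maximization gives \(3/7\).
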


\begin{proof}
The trace value is larger than \(1/\sqrt2\).  For \(q\geq27\), we have
\(d=q-1\geq10\), and Theorem~\ref{thm:ppoptimal} shows that the optimal
two-level value is
\[
 \frac{\sqrt q}{2(\sqrt{q-1}+1)}<\frac12.
\]
For \(q=9\), direct maximization of the formula in
Theorem~\ref{thm:ppoptimal} gives \(3/7<1/2\).  This proves the claim.
\end{proof}

\begin{remark}[An explicit eight-dimensional example]
\label{thm:q9trace}
Identify \(\mathbb F_9=\mathbb F_3[\omega]/(\omega^2+1)\) and write
\(m=a+b\omega\).  Here \(\operatorname{Tr}(m)=-a\), so using \(a\) instead
of the absolute trace interchanges the two nonzero trace classes.  Thus
Theorem~\ref{thm:traceflat} applies to
\[
 \phi(a+b\omega)=\frac1{\sqrt{17-9\sqrt2}}
 \begin{cases}
 1,&a=0,\\
 r,&a=1,\\
 -1-r,&a=2,
 \end{cases}
 \qquad r=\frac{-1+\sqrt{9-6\sqrt2}}2.
\]
Its exact least singular value is
\[
 \sigma_{\min}(\mathcal M_\phi)
 =\frac{9(\sqrt2-1)}{17-9\sqrt2}
 =\frac{9+72\sqrt2}{127}.
\]
The two block minima equal \((3+24\sqrt2)/127\).  This specializes the
general theorem without any additional finite-field enumeration.
\end{remark}

\begin{theorem}[A universal two-block tradeoff]
\label{thm:universalupper}
Let \(q\) be an odd prime power, \(d=q-1\), and let
\(\phi\in\mathbb R^{\mathbb F_q^\times}\) be a unit vector.  Put
\(X_\phi=\sum_m|\phi(m)|^4\).  Then
\[
\sigma_{\min}(\mathcal M_\phi)^2
\leq q\min\left\{
 \frac{dX_\phi-1}{d-1},\frac{1-X_\phi}{d-1}
 \right\}.
 \tag{45}
\]
Moreover,
\[
 \min\left\{
 \frac{dX_\phi-1}{d-1},\frac{1-X_\phi}{d-1}
 \right\}\leq\frac1q,
 \quad\text{and hence}\quad
 \sigma_{\min}(\mathcal M_\phi)^2\leq1.
\]
Consequently, the optimal two-level orbit in Theorem \ref{thm:ppoptimal}
is within the explicit factor
\[
\frac{\sqrt d+1}{\sqrt q\sin(\pi/(2p))}
 \tag{46}
\]
of the best possible stability among all real unit generating vectors.
For every fixed odd \(p\), this factor is bounded independently of \(h\).
\end{theorem}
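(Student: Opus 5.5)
The plan is to bound each block in Proposition~\ref{prop:qblock} by an averaging (trace) argument, and then compare the two resulting quantities. By (20a),
\[
\sigma_{\min}(\mathcal M_\phi)^2=q\min\Bigl\{\min_\chi|c_\phi(\chi)|^2,\ \sigma_{\min}(B_\phi)^2\Bigr\},
\]
so it suffices to bound the minimum over nontrivial characters and $\sigma_{\min}(B_\phi)^2$ separately by averages.

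For the character block, I would use Plancherel on $\mathbb F_q^\times$. It gives
\[
\sum_\chi|c_\phi(\chi)|^2=d\sum_m|\phi(-m)|^4=dX_\phi.
\]
The trivial character contributes $c_\phi(\mathbf 1)^2=\|\phi\|^4=1$. Averaging over the $d-1$ nontrivial characters then yields
\[
\min_{\chi\ne\mathbf1}|c_\phi(\chi)|^2\le\frac{dX_\phi-1}{d-1}.
\]

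For the second block, I would use
\[
\sigma_{\min}(B_\phi)^2\le\frac{\|B_\phi\|_F^2}{d-1},
\]
since $B_\phi$ has $d-1$ columns indexed by $t\in\mathbb F_q\setminus\{0,-1\}$. Substituting $u=mt$ and $\lambda=(t+1)/t$ gives
\[
\|B_\phi\|_F^2=\sum_{m,t}\phi(mt)^2\phi(m(t+1))^2=\sum_{u\ne0}\sum_{\lambda\ne0,1}\phi(u)^2\phi(\lambda u)^2 .
\]
One checks that $t\mapsto (t+1)/t$ is a bijection from $\mathbb F_q\setminus\{0,-1\}$ onto $\mathbb F_q^\times\setminus\{1\}$. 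Hence
\[
\|B_\phi\|_F^2=\sum_u\phi(u)^2\bigl(\|\phi\|^2-\phi(u)^2\bigr)=1-X_\phi .
\]
Together with the character bound, this proves (45).

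The inequality $\min\{\cdot,\cdot\}\le 1/q$ is elementary. The two expressions are respectively increasing and decreasing in $X_\phi$, so the minimum is largest when they are equal. Setting $dX-1=1-X$ gives $X=2/(d+1)$, and at that point both equal
\[
\frac{1-2/q}{d-1}=\frac{q-2}{q(q-2)}=\frac1q .
\]
Hence $\sigma_{\min}(\mathcal M_\phi)^2\le1$.

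The comparison factor (46) follows by dividing the universal bound $1$ by the value (31) from Theorem~\ref{thm:ppoptimal}. For the uniformity claim, note that $(\sqrt d+1)/\sqrt q\le 2$, so the factor is at most $2/\sin(\pi/(2p))\le 2p$, which depends only on $p$. Theorem~\ref{thm:ppoptimal} requires $d\ge10$ and $h\ge2$, and (31) applies exactly in that range. For the remaining small cases I would instead invoke the lower bound (28) of Theorem~\ref{thm:primepower}. It gives the same expression as a lower bound for $\phi_q$, which suffices for the ``within a factor'' claim. The only real care point is the column reindexing bijection in the Frobenius computation; everything else is routine.
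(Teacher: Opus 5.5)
Your proposal is correct and follows the paper's proof essentially step for step: Plancherel averaging over the $d-1$ nontrivial characters, a Frobenius-norm average for $B_\phi$, the crossing point $X_\phi=2/q$, and division by (31). Your reindexing $(u,\lambda)=(mt,(t+1)/t)$ is equivalent to the paper's bijection $(m,t)\mapsto(mt,m(t+1))$ onto ordered pairs of distinct elements. You also fall back on the lower bound (28) for $\phi_q$ when $h=1$ or $q=9$; this tightens the paper's sketch, which divides by (31) without noting that (31) is only asserted for $h\ge2$ and $d\ge10$.
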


\begin{proof}
Let \(a_m=|\phi(m)|^2\), so that \(\sum_ma_m=1\) and
\(\sum_ma_m^2=X_\phi\).  Multiplicative Plancherel gives
\[
 \sum_{\chi\ne1}|c_\phi(\chi)|^2=dX_\phi-1.
\]
It follows that
\[
 \min_\chi|c_\phi(\chi)|^2\leq\frac{dX_\phi-1}{d-1}.
 \tag{47}
\]
The change of variables \((m,t)\mapsto(x,y)=(mt,m(t+1))\) is a bijection
from the index set of \(B_\phi\) to the ordered pairs of distinct elements of
\(\mathbb F_q^\times\).  Therefore
\[
 \|B_\phi\|_F^2=\sum_{x\ne y}a_xa_y=1-X_\phi,
 \qquad
 \sigma_{\min}(B_\phi)^2\leq\frac{1-X_\phi}{d-1}.
 \tag{48}
\]
The exact orthogonal block identities in Proposition \ref{prop:qblock} imply
that the least singular value of the full measurement map is at most \(\sqrt q\)
times either block minimum.  Combining (47)--(48) gives the first inequality
in (45).  The two displayed terms are equal at \(X_\phi=2/(d+1)\), and their
minimum is never larger than \(1/(d+1)=1/q\), proving the second inequality.
Finally, divide this universal upper bound by the value in (31).  Since
\(\sqrt q/(\sqrt d+1)\geq1/2\), the stated factor is also at most
\(2\csc(\pi/(2p))\), independently of \(h\).
\end{proof}

\begin{proposition}[Rigidity at the universal ceiling]
\label{prop:ceilingrigidity}
Let \(\phi\in\mathbb R^{\mathbb F_q^\times}\) be a unit vector.  If
\(\sigma_{\min}(\mathcal M_\phi)=1\), then
\[
 X_\phi=\frac{2}{q},
 \qquad
 |c_\phi(\chi)|=\frac{1}{\sqrt q}
 \quad(\chi\neq\mathbf 1),
 \qquad
 B_\phi^*B_\phi=\frac1q I_{d-1}.
 \tag{49}
\]
\end{proposition}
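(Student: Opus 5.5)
The plan is to run the proof of Theorem~\ref{thm:universalupper} again, this time keeping track of every inequality used, and to note that \(\sigma_{\min}(\mathcal M_\phi)=1\) forces each of them to be an equality.

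By Proposition~\ref{prop:qblock} (identity (20a)) we have
\[
 \sigma_{\min}(\mathcal M_\phi)^2=q\min\Bigl\{\min_\chi|c_\phi(\chi)|^2,\ \sigma_{\min}(B_\phi)^2\Bigr\}.
\]
So if \(\sigma_{\min}(\mathcal M_\phi)=1\), then \(|c_\phi(\chi)|^2\geq1/q\) for every nontrivial \(\chi\), and \(\sigma_{\min}(B_\phi)^2\geq1/q\).

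\textbf{Step 1 (pin down \(X_\phi\)).} From (45),
\[
 \frac1q\leq\min\left\{\frac{dX_\phi-1}{d-1},\ \frac{1-X_\phi}{d-1}\right\}.
\]
The first term is increasing in \(X_\phi\) and the second is decreasing. They cross at \(X_\phi=2/(d+1)=2/q\), where both equal \(1/q\). The minimum can therefore reach \(1/q\) only at that point, which gives \(X_\phi=2/q\).

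\textbf{Step 2 (character block).} Multiplicative Plancherel gives
\[
 \sum_{\chi\ne\mathbf1}|c_\phi(\chi)|^2=dX_\phi-1=\frac{2d}{q}-1=\frac{d-1}{q},
\]
using \(d-1=q-2\). This sum has \(d-1\) terms, and each term is at least \(1/q\). Their total is exactly \((d-1)/q\), so every term equals \(1/q\). Hence \(|c_\phi(\chi)|=1/\sqrt q\) for all \(\chi\ne\mathbf1\).

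\textbf{Step 3 (second block).} By (48),
\[
 \operatorname{tr}(B_\phi^*B_\phi)=\|B_\phi\|_F^2=1-X_\phi=\frac{q-2}{q}=\frac{d-1}{q}.
\]
The matrix \(B_\phi^*B_\phi\) is \((d-1)\times(d-1)\) and positive semidefinite. All its eigenvalues are at least \(\sigma_{\min}(B_\phi)^2\geq1/q\), and their sum is \((d-1)/q\). So every eigenvalue equals \(1/q\). A Hermitian matrix whose spectrum is the single value \(1/q\) is \(\frac1qI\), which gives \(B_\phi^*B_\phi=\frac1qI_{d-1}\).

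The only delicate point is the bookkeeping \(2d/q-1=(d-1)/q\) together with the count of nontrivial characters \((d-1)\) and of columns of \(B_\phi\) \((q-2=d-1)\). Once these are checked, Steps 2 and 3 are pure averaging arguments, and no real obstacle is expected.
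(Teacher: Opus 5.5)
Your proof is correct and follows essentially the same route as the paper: it uses (20a) to get \(|c_\phi(\chi)|^2\geq 1/q\) for nontrivial \(\chi\) and \(\sigma_{\min}(B_\phi)^2\geq 1/q\), forces \(X_\phi=2/q\) from (45), and then concludes with the same two averaging arguments, via Plancherel and via \(\|B_\phi\|_F^2=1-X_\phi=(d-1)/q\). Your monotonicity argument in Step 1 just spells out what the paper compresses into ``equality in (45) forces the two scalar terms there to agree.''
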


\begin{proof}
Equality in (45) forces the two scalar terms there to agree, so that
\(X_\phi=2/(d+1)=2/q\).  The character calculation in the proof of
Theorem~\ref{thm:universalupper} then has average
\[
 \frac{1}{d-1}\sum_{\chi\ne\mathbf1}|c_\phi(\chi)|^2=\frac1q.
\]
On the other hand, the exact first-block identity in
Proposition~\ref{prop:qblock} and \(\sigma_{\min}(\mathcal M_\phi)=1\)
give \(|c_\phi(\chi)|^2\geq1/q\) for every nontrivial \(\chi\).  Hence
every one of these inequalities is an equality.  Likewise, the second-block
identity gives \(\sigma_{\min}(B_\phi)^2\geq1/q\), whereas (48) in the
proof of Theorem~\ref{thm:universalupper} gives
\(\|B_\phi\|_F^2=(d-1)/q\).  Thus all \(d-1\) singular values of
\(B_\phi\) equal \(q^{-1/2}\), which is equivalent to the last assertion
in (49).
\end{proof}

\begin{lemma}[The ceiling is tight informational completeness]
\label{lem:ceiling2design}
For a unit vector \(\phi\), let \(P_g=\pi_q(g)\phi\phi^*\pi_q(g)^*\) and
define the frame superoperator
\[
 \mathcal S_\phi(A)=\sum_{g\in G_q}\operatorname{tr}(AP_g)P_g.
\]
Then \(\sigma_{\min}(\mathcal M_\phi)=1\) if and only if
\[
 \mathcal S_\phi(A)=A+\operatorname{tr}(A)I
 \qquad(A\in\mathbb C^{d\times d}).
 \tag{50}
\]
Equivalently, the group-indexed orbit \(\{\pi_q(g)\phi:g\in G_q\}\), with
its uniform group multiplicities, is a tight informationally complete
projective measurement, or, in the usual terminology, a complex projective
\(2\)-design \cite{Scott2006}.
\end{lemma}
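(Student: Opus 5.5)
The plan is to relate the frame superoperator to the measurement map, \(\mathcal S_\phi=\mathcal M_\phi^*\mathcal M_\phi\).  This holds because \(\mathcal M_\phi(A)(g)=\ip{A\pi_q(g)\phi}{\pi_q(g)\phi}=\operatorname{tr}(AP_g)\), so \(\ip{\mathcal M_\phi A}{\mathcal M_\phi B}_{\ell^2(G_q)}=\sum_g\operatorname{tr}(AP_g)\overline{\operatorname{tr}(BP_g)}=\ip{\mathcal S_\phi A}{B}_F\), using \(P_g^*=P_g\).  Hence \(\sigma_{\min}(\mathcal M_\phi)^2=\lambda_{\min}(\mathcal S_\phi)\), and the statement becomes a claim about the spectrum of the positive operator \(\mathcal S_\phi\).

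Next I will compute \(\mathcal S_\phi\) on two distinguished directions, independently of \(\phi\).  On \(I\), \(\operatorname{tr}(P_g)=1\), so \(\mathcal S_\phi(I)=\sum_gP_g\).  The orbit \(\pi_q(G_q)\phi\) is a tight frame by irreducibility and Schur's lemma, so \(\sum_gP_g=(|G_q|/d)I=qI\).  Thus \(I\) is an eigenvector with eigenvalue \(q=d+1\), which is consistent with (50), since \(I+\operatorname{tr}(I)I=(1+d)I\).  The trace functional is also preserved: \(\operatorname{tr}\mathcal S_\phi(A)=\sum_g\operatorname{tr}(AP_g)=q\operatorname{tr}A\).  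Since \(\mathcal S_\phi\) is self-adjoint and fixes the line \(\C I\), it preserves the traceless subspace \(I^\perp\).  The operator \(A\mapsto A+\operatorname{tr}(A)I\) acts as the identity on \(I^\perp\) and as multiplication by \(q\) on \(I\).  So (50) is equivalent to \(\mathcal S_\phi|_{I^\perp}=\mathrm{id}\).

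Then I compare the total trace of \(\mathcal S_\phi\) with the bound \(\sigma_{\min}\le1\) of Theorem~\ref{thm:universalupper}.  The superoperator trace is
\[
 \operatorname{Tr}\mathcal S_\phi=\sum_g\operatorname{tr}(P_g^2)=\sum_g\norm{\phi}^4=|G_q|=qd.
\]
Subtracting the eigenvalue \(q\) on \(I\) leaves \(qd-q=d^2-1=\dim I^\perp\).  So on \(I^\perp\) the eigenvalues of \(\mathcal S_\phi\) average exactly \(1\).

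Both directions now follow.  If \(\sigma_{\min}(\mathcal M_\phi)=1\), then every eigenvalue of \(\mathcal S_\phi\) is at least \(1\), and averaging exactly \(1\) on \(I^\perp\) forces them all to equal \(1\), which gives (50).  Conversely, (50) shows the spectrum is \(\{1,q\}\), so \(\lambda_{\min}=1\).  For the equivalence with tight informational completeness, I will identify (50) with the standard 2-design identity \(\sum_gP_g\otimes P_g\propto I+\mathrm{SWAP}\), read as a superoperator with the normalization fixed by \(\operatorname{Tr}\mathcal S_\phi=qd\), and cite \cite{Scott2006}.  I expect the main care to lie in the trace bookkeeping, namely keeping the superoperator trace distinct from the matrix trace.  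This route needs none of the block decomposition.
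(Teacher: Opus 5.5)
Your proposal is correct and follows the paper's proof essentially step for step. You identify \(\mathcal S_\phi=\mathcal M_\phi^*\mathcal M_\phi\), get the eigenvalue \(q\) on \(I\) from Schur's lemma, and use \(\operatorname{Tr}\mathcal S_\phi=qd\) to force the remaining \(d^2-1\) eigenvalues, each at least one, to equal one. Your appeal to Theorem~\ref{thm:universalupper} is unnecessary, since the averaging argument alone gives the conclusion, and that theorem is stated only for real \(\phi\).
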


\begin{proof}
The superoperator \(\mathcal S_\phi\) is \(\mathcal M_\phi^*\mathcal
M_\phi\).  Irreducibility of \(\pi_q\) and Schur's lemma give
\(\sum_gP_g=qI\), so \(I\) is an eigenvector of \(\mathcal S_\phi\) with
eigenvalue \(q\).  Moreover,
\[
 \operatorname{tr}(\mathcal S_\phi)=\sum_g\|P_g\|_F^2
 =|G_q|=qd.
\]
If \(\sigma_{\min}(\mathcal M_\phi)=1\), the remaining \(d^2-1\)
eigenvalues are at least one, while their sum is
\(qd-q=d^2-1\).  They are therefore all one, which proves (50).
Conversely, (50) gives singular values \(\sqrt q\) on the scalar matrices
and one on their Hilbert--Schmidt orthogonal complement, proving the claim.
The final equivalence is the standard operator form of the projective
\(2\)-design identity.
\end{proof}

\begin{corollary}[Correlation constraints for a ceiling extremizer]
\label{cor:ceilingcorrelation}
Under the hypotheses of Proposition~\ref{prop:ceilingrigidity}, put
\(a(m)=|\phi(m)|^2\) and \(h(m)=\phi(m)\phi(-m)\).  Then, for every
\(r\in\mathbb F_q^\times\),
\[
 \sum_{m\in\mathbb F_q^\times}a(m)a(rm)
 =
 \begin{cases}
  2/q,&r=1,\\
  1/q,&r\ne1.
 \end{cases}
 \tag{51}
\]
Moreover,
\[
 \sum_{m\in\mathbb F_q^\times}h(m)h(rm)=0
 \qquad(r\notin\{1,-1\}).
 \tag{52}
\]
\end{corollary}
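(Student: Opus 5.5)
We derive (51) from the character identity in (49), and (52) from the identity \(B_\phi^*B_\phi=q^{-1}I_{d-1}\), by passing in each case to an autocorrelation via Fourier inversion.

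\emph{Step 1: proof of (51).} Put \(R(r)=\sum_m a(m)a(rm)\). As in the proof of Lemma~\ref{lem:tracecharacters}, multiplicative Fourier inversion gives
\[
 |c_\phi(\chi)|^2=\sum_{r\in\mathbb F_q^\times}R(r)\chi(r).
\]
The sign convention \(a(-m)\) in \(c_\phi\) is harmless: replacing \(m\) by \(-m\) in both factors leaves \(R\) unchanged. By (49), \(|c_\phi(\chi)|^2=1/q\) for every nontrivial \(\chi\), and \(c_\phi(\mathbf1)=1\). Inverting over the \(d=q-1\) characters gives
\[
 R(r)=\frac1d\Bigl(1+\frac1q\sum_{\chi\ne\mathbf1}\overline{\chi(r)}\Bigr).
\]
For \(r=1\) this equals \(\frac1d\bigl(1+\frac{d-1}{q}\bigr)=\frac{q+d-1}{qd}=\frac{2}{q}\), since \(q+d-1=2d\). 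For \(r\ne1\) the character sum is \(-1\), so \(R(r)=\frac1d\bigl(1-\frac1q\bigr)=\frac1q\). This step is routine and also recovers \(X_\phi=2/q\).

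\emph{Step 2: reduce (52) to the Gram identity.} The Gram identity \(B_\phi^*B_\phi=q^{-1}I\) carries more information than the trace constraint. Its entries are
\[
 (B^*B)_{t,t'}=\sum_m \phi(mt)\phi(m(t+1))\phi(mt')\phi(m(t'+1)).
\]
The plan is to choose a pair \(t\ne t'\) for which two of the four arguments pair up as \(x\) and \(-x\), so that the sum collapses to \(\sum h(\cdot)h(\cdot)\). Take \(t'=-(t+1)\), so that \(t'+1=-t\). The entry becomes
\[
 \sum_m \phi(mt)\phi(-mt)\,\phi(m(t+1))\phi(-m(t+1))=\sum_m h(mt)h(m(t+1)).
\]
Substituting \(u=mt\) rewrites this as \(\sum_u h(u)h(ru)\) with \(r=(t+1)/t\).

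\emph{Step 3: range of \(r\) and diagonal cases.} We need \(t'\ne t\), i.e.\ \(t\ne-1/2\), and we need \(t'\) to lie in the column index set \(\mathbb F_q\setminus\{0,-1\}\), which holds whenever \(t\) does. As \(t\) ranges over \(\mathbb F_q\setminus\{0,-1,-1/2\}\), the ratio \(r=1+1/t\) ranges over \(\mathbb F_q\setminus\{1,0,-1\}\), which is exactly the range in (52). Off-diagonal entries vanish by (49), so (52) follows.

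The main obstacle is checking that the excluded value \(t=-1/2\), the diagonal case \(t=t'\), corresponds precisely to \(r=-1\). Indeed \(1+1/(-1/2)=-1\), which explains why \(r=\pm1\) are excluded in (52). The remaining point is that \(q\) odd makes \(1/2\) exist and keeps the map \(t\mapsto r\) bijective.
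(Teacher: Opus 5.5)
Your proposal is correct and follows the paper's proof: you obtain (51) by multiplicative Fourier inversion of the autocorrelation using the character values in (49). For (52) you pair column \(t\) with column \(t'=-(t+1)\), which is exactly the paper's pairing of the columns indexed by \(r=(t+1)/t\) and \(r^{-1}\); your check that the diagonal case \(t=-1/2\) corresponds to \(r=-1\) matches the paper's remark that these two columns are distinct precisely when \(r\neq\pm1\).
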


\begin{proof}
The multiplicative Fourier transform of the left side of (51) is
\(|c_\phi(\chi)|^2\).  Proposition~\ref{prop:ceilingrigidity}, followed by
Fourier inversion on the group of order \(d=q-1\), gives (51).

For the second claim, reindex the columns of \(B_\phi\) by
\(r=(t+1)/t\in\mathbb F_q^\times\setminus\{1\}\).  The inner product of
the columns indexed by \(r\) and \(r^{-1}\), after setting
\(x=m/(r-1)\), is
\[
 \sum_x\phi(x)\phi(rx)\phi(-rx)\phi(-x)
 =\sum_xh(x)h(rx).
\]
If \(r\notin\{1,-1\}\), these are distinct columns.  The final assertion
of Proposition~\ref{prop:ceilingrigidity} makes their inner product zero.
\end{proof}

\begin{remark}[Normalization benchmark]
\label{rem:benchmark}
The scale in Theorem \ref{thm:primepower} is intrinsic to the unnormalized
coefficient map.  Indeed, for every unit vector \(\phi\), each row functional
of \(\mathcal M_\phi\) is represented in Hilbert--Schmidt space by a rank-one
projector of norm one.  Consequently
\[
 \|\mathcal M_\phi\|_{\mathrm{HS}}^2=|G_q|=q(q-1),
 \qquad
 \sigma_{\min}(\mathcal M_\phi)^2\leq\frac{|G_q|}{d^2}
 =\frac{q}{q-1}.
 \tag{53}
\]
Thus a lower bound bounded away from zero along a fixed-characteristic tower
is the correct dimensional scale, rather than an effect of increasing the
number of measurements.
\end{remark}

\section{Concluding perspective}

The affine orbit provides a setting in which qualitative matrix recovery can
be upgraded to an exact stability theory.  Additive Fourier orthogonality
separates the orbit measurement operator into representation-theoretic
blocks, while multiplicative characters and finite-geometric incidence
structures make the relevant singular values explicit.  This yields both a
sharp two-level design theorem and a uniformly stable three-level family on
the \(3\)-power towers.

The three-value trace calculation goes further: it completely optimizes a
natural finite-field harmonic class, including all equality cases.  Thus the
trace construction is not merely an isolated well-conditioned orbit.  It is
the unique extremizer within a class determined by the absolute-trace
partition.  The universal ceiling and its rigidity conditions give a separate
benchmark for the broader problem of optimizing over arbitrary real windows.

\appendix
\section{Exact verification of the complementary sign certificates}
\label{app:certificates}

This appendix records an independently reproducible exact-arithmetic check of
Lemma~\ref{lem:traceothercertificates}.  It is not a numerical experiment and
is not used in place of the proof: every matrix, weight, and interpolation
identity needed for that proof is displayed in the lemma itself.  Its purpose
is to make the finite list of algebraic inequalities easy to audit.

All entries of \(Q_9\) and \(Q_\infty\) belong to
\(\mathbb Q(\sqrt2)\).  For each of the six sign rows, the accompanying
supplementary file \texttt{audit\_global\_trace\_certificates.py} implements
the field \(\mathbb Q(\sqrt2)\) as pairs of rational numbers and verifies,
using no floating-point operations, all seven nonempty principal minors of
both \(Q_9\) and \(Q_\infty\).  It verifies strict positivity for the seven
minors of \(Q_9\), nonnegativity for the seven minors of \(Q_\infty\), and
the two-square identity of Lemma~\ref{lem:tracecertificate} coefficient by
coefficient.  Thus the six endpoint checks comprise \(84\) exact principal-
minor sign tests.  The elementary bounds
\(7/5<\sqrt2<10/7\) used in the proof are also checked symbolically by
rational comparison.  No floating-point operation or third-party numerical
library is used; a short submission README states the one-command
reproduction procedure.

A second exact script, \texttt{audit\_general\_trace\_exact.py}, audits the
general three-value trace reduction at \(q=9\).  It verifies all \(49\)
entries of the unnormalized second-block Gram matrix and the determinant
identity for \(K\), on an exact \(5\times5\times5\) interpolation grid for
the three real trace values.  The relevant differences have degree at most
four in each variable, so these \(6125\) rational checks certify the stated
polynomial identities at \(q=9\), including the specialization to the
trace-flat window.  As with the first script, this is an independent audit
of the written proof, not a substitute for the general finite-field argument.

For clarity, the interpolation in Lemma~\ref{lem:traceothercertificates} is
an identity in \(\mathbb Q(\sqrt2)\):
\[
 Q_q=\frac{\mathcal D_9}{\mathcal D_q}Q_9+
 \left(1-\frac{\mathcal D_9}{\mathcal D_q}\right)Q_\infty,
 \qquad \mathcal D_q=q(2-\sqrt2)-1.
\]
For every finite \(q\geq9\), its first coefficient is strictly positive,
and the second is nonnegative.  Hence the exact endpoint sign checks,
together with Sylvester's criterion for \(Q_9\), provide a compact
certificate valid uniformly for all \(3\)-power field sizes in the theorem.

\end{document}